\newcommand{\R}{\mathbb{R}}
\newcommand{\C}{\mathbb{C}}
\newcommand{\N}{\mathbb{N}}
\newcommand{\supp}{\operatorname{supp}\,}
\newcommand{\bv}{\operatorname{bv}}
\newtheorem{theorem}{Theorem}[section]
\newtheorem{lemma}[theorem]{Lemma}
\newtheorem{corollary}[theorem]{Corollary}
\newtheorem{proposition}[theorem]{Proposition}
\theoremstyle{definition}
\newtheorem{remark}[theorem]{Remark}
\newtheorem{definition}[theorem]{Definition}
\DeclareRobustCommand\widecheck[1]{{\mathpalette\@widecheck{#1}}}
\def\@widecheck#1#2{%
    \setbox\z@\hbox{\m@th$#1#2$}%
    \setbox\tw@\hbox{\m@th$#1%
       \widehat{%
          \vrule\@width\z@\@height\ht\z@
          \vrule\@height\z@\@width\wd\z@}$}%
    \dp\tw@-\ht\z@
    \@tempdima\ht\z@ \advance\@tempdima2\ht\tw@ \divide\@tempdima\thr@@
    \setbox\tw@\hbox{%
       \raise\@tempdima\hbox{\scalebox{1}[-1]{\lower\@tempdima\box
\tw@}}}%
    {\ooalign{\box\tw@ \cr \box\z@}}}
\tikzset{join/.code=\tikzset{after node path={%
\ifx\tikzchainprevious\pgfutil@empty\else(\tikzchainprevious)%
edge[every join]#1(\tikzchaincurrent)\fi}}}
\tikzset{>=stealth',every on chain/.append style={join},
         every join/.style={->}}
\tikzstyle{labeled}=[execute at begin node=$\scriptstyle,
\begin{document}
\title[Boundary values of zero solutions of differential operators]{Boundary values of zero solutions of hypoelliptic  differential operators in ultradistribution spaces}

\author[A. Debrouwere]{A. Debrouwere$^1$}
\address{$^1$Department of Mathematics: Analysis, Logic and Discrete Mathematics, Ghent University, Krijgslaan 281, 9000 Gent, Belgium}
\email{andreas.debrouwere@UGent.be}

\author[T.\ Kalmes]{T.\ Kalmes$^2$}
\address{$^2$Technische Universit\"at Chemnitz, Faculty of Mathematics, 09107 Chemnitz, Germany}
\email{thomas.kalmes@math.tu-chemnitz.de}

\thanks{This version of the article has been accepted for publication in Mathematische Annalen, after peer review but is not the Version of Record and does not reflect post-acceptance improvements, or any corrections. The Version of Record is available online at: \href{http://dx.doi.org/10.1007/s00208-022-02411-x}{http://dx.doi.org/10.1007/s00208-022-02411-x}}

\begin{abstract}
	We study ultradistributional boundary values of zero solutions of a hypoelliptic constant coefficient partial differential operator $P(D) = P(D_x, D_t)$ on $\R^{d+1}$. Our work unifies and considerably extends various classical results of Komatsu and Matsuzawa about boundary values of holomorphic functions, harmonic functions and  zero solutions of the heat equation in ultradistribution spaces. We also give new  proofs of several results of Langenbruch \cite{Langenbruch1978} about distributional boundary values of zero solutions of $P(D)$.\\
	
	\noindent Keywords: Hypoelliptic and semi-elliptic differential operators; Boundary values; Ultradistributions.\\
	
	\noindent MSC 2020: 35G15; 35H10; 46F05; 46F20
\end{abstract}

\maketitle

\section{Introduction}
The study of distributional and ultradistributional boundary values of holomorphic functions, which goes back to the seminal works of K\"othe \cite{Kothe} and Tillmann \cite{Tillmann}  for distributions and  of Komatsu \cite{Komatsu} for ultradistributions, is an important subject in the theory of generalized functions. We refer to  the survey article \cite{Meise} and the books \cite{CaKaPi,CaMi} and the references therein for an account of results on this topic. Similarly, the boundary value behavior of harmonic functions \cite{Alvarez,E-K,Komatsu1991,V-V} and of zero solutions of the heat equation \cite{C-K,K-C-K,Matsuzawa-1,Matsuzawa} in generalized function spaces have been thoroughly  investigated.

In the distributional case, Langenbruch \cite{Langenbruch1978} generalized the above results by developing a theory of distributional boundary values for the zero solutions of a hypoelliptic partial differential operator  $P(D) = P(D_x, D_t)$ on $\R^{d+1}$ with constant coefficients. For ultradistributions much less is known in this general setting: In \cite{Langenbruch1979,Langenbruch1979-2} Langenbruch studied spaces of formal boundary values of zero solutions of $P(D)$, in the style of Bengel's approach to hyperfunctions \cite{Bengel}. In \cite[Satz 2.4]{Langenbruch1979-2} he  characterized the zero solutions $f$ of $P(D)$ that admit a boundary value in a given ultradistribution space in terms of the ultradistributional extendability properties of $f$ (see also \cite[Section 3]{Komatsu1979}).  Furthermore, he proved that,   for  certain semi-elliptic operators $P(D)$,  the space of formal boundary values of solutions of $P(D)$ may be identified with the  Cartesian product of  some Gevrey ultradistribution spaces of Roumieu type \cite[Satz 4.9 and Satz 4.10]{Langenbruch1979-2}. The aim of the present paper is to complement these results by extending Langenbruch's results from \cite{Langenbruch1978} for distributions to the framework of ultradistributions, both of Beurling and Roumieu type, defined via weight sequences \cite{Komatsu}.

We now describe the content of the paper and state a sample of our main results. For the sake of clarity, we consider here only Gevrey ultradistribution spaces. Let $P(D)$ be a hypoelliptic partial differential operator  on $\R^{d+1}$.  Let $X \subseteq \R^d$ be open. For $\sigma > 1$ we write $\mathscr{D}^{(\sigma)}(X)$ and $\mathscr{D}^{\{\sigma\}}(X)$ for the spaces of compactly supported Gevrey ultradifferentiable functions of order $\sigma$ of Beurling type and of Roumieu type, and endow these spaces with their natural locally convex topology. The spaces of  Gevrey ultradistributions  of order $\sigma$ of Beurling type and of Roumieu type are defined as the strong dual spaces of $\mathscr{D}^{(\sigma)}(X)$ and $\mathscr{D}^{\{\sigma\}}(X)$. We denote them by $\mathscr{D}'^{(\sigma)}(X)$ and $\mathscr{D}'^{\{\sigma\}}(X)$.  We use $\mathscr{D}'^{[\sigma]}(X)$ as a common notation for $\mathscr{D}'^{(\sigma)}(X)$ and $\mathscr{D}'^{\{\sigma\}}(X)$; a similar convention will be used for other spaces as well. Let $V \subseteq \R^{d+1}$ be open such that $V \cap \R^d = X$. Set $C^\infty_P(V \backslash X) = \{ f \in C^\infty(V \backslash X) \, | \, P(D) f = 0\}$. The boundary value $\bv(f) \in \mathscr{D}'^{[\sigma]}(X)$ of an element $f \in C^\infty_P(V \backslash X)$ is defined as
$$
\langle \bv(f), \varphi \rangle := \lim_{t,s \to 0^+} \int_{X} (f(x,t ) - f(x,-s))  \varphi(x) dx, \qquad \varphi \in \mathscr{D}^{[\sigma]}(X),
$$
provided that  $\bv(f)  \in \mathscr{D}'^{[\sigma]}(X)$ exists. We have two main goals: Firstly, we wish to characterize the elements of $C^\infty_P(V \backslash X)$ that admit a boundary value in  $\mathscr{D}'^{[\sigma]}(X)$ in terms of their local growth properties near $X$. Secondly, we aim to study the uniqueness and the solvability of the Cauchy type problem 
\[ \begin{cases}
	P(D) f = 0& \mbox{ on $V \backslash X$}, \\
	\bv(D^j_tf) = T_j & \mbox{on $X$ for $j = 0, \ldots, m-1$},\\
\end{cases}\]
where $f \in C^\infty(V \backslash X)$, $m = \deg_t P$ and the initial data $T_0, \ldots, T_{m-1}$ belong to  $\mathscr{D}'^{[\sigma]}(X)$. This will lead to the representation of the $m$-fold Cartesian product of $\mathscr{D}'^{[\sigma]}(X)$ by means of boundary values of elements of $C^\infty_P(V \backslash X)$.  

Let us explicitly state the above two results for certain semi-elliptic partial differential operators. To this end, we introduce the following weighted spaces of zero solutions of $P(D)$
\begin{align*}
	C^\infty_{P, (\sigma)}(V\backslash X) := \{ &f \in C^\infty_P(V\backslash X) \, | \, \forall K \Subset V \, \exists h > 0 \, : \, \\
	&\| f\|_{K,h} := \sup_{(x,t) \in K \backslash K \cap \R^d} |f(x,t)| e^{-h|t|^{-\frac{1}{\sigma-1}}} < \infty \},
\end{align*}
$$
C^\infty_{P, \{\sigma\}}(V\backslash X) := \{ f \in C^\infty_P(V\backslash X) \, | \, \forall K \Subset V \, \forall h > 0 \, : \, \| f \|_{K,h} < \infty \},
$$
where $K\Subset V$ denotes that $K$ is a compact subset of $V$. We endow these spaces with their natural locally convex topology. We then have:

\begin{theorem}\label{cor:semi-elliptic operators}
	Let $P(D)$ be a semi-elliptic  partial differential operator on $\R^{d+1}$ with $\operatorname{deg}_{x_1}P=\cdots=\operatorname{deg}_{x_d}P=n$. Set $m = \deg_t P$ and $a_0 = n/m$.  Let $X \subseteq \R^d$ be open and let $V \subseteq \R^{d+1}$ be open such that $V \cap \R^d = X$.
	\begin{itemize}
		\item[$(a)$] Let $\sigma > \max\{1, 1/a_0\}$. For $f \in C^\infty_P(V\backslash X)$ the following statements are equivalent:
		\begin{itemize}
			\item[$(i)$]  $f \in C^\infty_{P,[a_0\sigma]}(V\backslash X)$.
			\item[$(ii)$] $\operatorname{bv}(f) \in \mathscr{D}'^{[\sigma]}(X)$ exists.
			\item[$(iii)$] For every relatively compact  open subset $Y$ of $X$ there is $r>0$ such that $\{ f(\, \cdot \,, t) \, | \, 0 < |t| < r \}$  is bounded in  $\mathscr{D}'^{[\sigma]}(Y)$.
		\end{itemize}
		In such a case, $\operatorname{bv}(D^l_tf) \in \mathscr{D}'^{[\sigma]}(X)$ exists for all $l \in \N_0$. Next, define
		\begin{equation}
			\bv^m: C^\infty_{P, [ a_0\sigma ]}(V\backslash X) \rightarrow  \prod_{j = 0}^{m-1}\mathscr{D}'^{[\sigma]}(X), \, f \mapsto (\bv(D^j_t f))_{0 \leq j \leq m-1}. 
			\label{bv-intro}
		\end{equation}
		Then, the sequence 
		$$
		0  \xrightarrow{\phantom{\phantom,\operatorname{bv}^m\phantom,}} C^\infty_P(V)  \xrightarrow{\phantom{\phantom,\operatorname{bv}^m\phantom,}} C^\infty_{P,[a_0\sigma ]}(V\backslash X) \xrightarrow{\phantom,\operatorname{bv}^m\phantom,} \prod_{j = 0}^{m-1}\mathscr{D}'^{[\sigma]}(X)  \xrightarrow{\phantom{\phantom,\operatorname{bv}^m\phantom,}} 0
		$$
		is exact and $\operatorname{bv}^m$ is a topological homomorphism, provided that $V$ is $P$-convex for supports. In particular, this holds for all open sets of the form $V = X \times I$, with $I \subseteq \R$ an open interval containing $0$.
		\item[$(b)$] Let $\sigma = 1/a_0 > 1$.   We have that $\operatorname{bv}(f) \in \mathscr{D}'^{\{\sigma\}}(X)$ exists for all $f \in C^\infty_P(V\backslash X)$. Define the mapping $\bv^m$ similarly as in \eqref{bv-intro}. Then, the sequence 
		$$
		0  \xrightarrow{\phantom{\phantom,\operatorname{bv}^m\phantom,}} C^\infty_P(V)  \xrightarrow{\phantom{\phantom,\operatorname{bv}^m\phantom,}} C^\infty_{P}(V\backslash X) \xrightarrow{\phantom,\operatorname{bv}^m\phantom,} \prod_{j = 0}^{m-1}\mathscr{D}'^{\{\sigma\}}(X)  \xrightarrow{\phantom{\phantom,\operatorname{bv}^m\phantom,}} 0
		$$
		is exact and $\operatorname{bv}^m$ is a topological homomorphism, provided that $V$ is $P$-convex for supports. In particular, this holds for all open sets of the form $V = X \times I$, with $I \subseteq \R$ an open interval containing $0$.
	\end{itemize}
\end{theorem}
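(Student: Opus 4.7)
The plan is to derive this statement as a specialization of the general theorems of the paper (which hold for arbitrary admissible weight sequences $M$) to the Gevrey sequence $M_p=p!^\sigma$, verifying that for semi-elliptic $P(D)$ the companion weight sequence controlling the growth of $f$ in $t$ is precisely $p!^{a_0\sigma}$. The key observation is that for a semi-elliptic $P$ with $\deg_{x_i}P=n$ and $\deg_t P=m$, one has $|P(\xi,\tau)|\gtrsim |\xi|^n+|\tau|^m$ for $|\xi|+|\tau|$ large, so the parametrix (and fundamental solution) of $P(D)$ has anisotropic Gevrey estimates in which each time-derivative costs a factor $m$ while each space-derivative costs a factor $1$. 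Translating this through the general boundary-value characterization, a test function $\varphi$ of class $p!^\sigma$ in $x$ matches a growth of class $p!^{a_0\sigma}$ in $t$, and the associated function of $p!^{a_0\sigma}$ is comparable to $h|t|^{-1/(a_0\sigma-1)}$, which is exactly the weight in the definition of $C^\infty_{P,[a_0\sigma]}(V\setminus X)$.

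For part (a), the implications $(i)\Rightarrow(ii)\Rightarrow(iii)$ are routine: $(i)\Rightarrow(ii)$ follows from integration against $\varphi\in\mathscr{D}^{[\sigma]}(X)$ after observing that the weight kills the $t$-growth since $\sigma>\max\{1,1/a_0\}$ forces $a_0\sigma>1$, while $(ii)\Rightarrow(iii)$ is a Banach--Steinhaus argument applied to the family $\{\langle f(\cdot,t),\varphi\rangle : 0<|t|<r\}$. The nontrivial direction $(iii)\Rightarrow(i)$ is obtained by representing $f(\cdot,t)$ on a relatively compact subset by convolution with a semi-elliptic parametrix for $P(D)$ acting on a cutoff of $f$ itself; the Gevrey estimates on the parametrix then yield the pointwise bound defining $\|f\|_{K,h}$.

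For the exact sequence, injectivity at $C^\infty_P(V)$ is immediate. Exactness in the middle amounts to the statement that if $\bv(D^j_tf)=0$ for all $j=0,\dots,m-1$, then $f$ extends as a zero solution across $X$: one writes $f$ via the parametrix representation used above, notes that the $m$ boundary traces encode all the jumps that can possibly occur across $X$ for a $P(D)$-solution of order $m$ in $t$, and concludes that the extension is smooth by hypoellipticity of $P(D)$. Surjectivity of $\bv^m$ is handled by fixing a locally finite covering of $X$ by small boxes, solving the Cauchy problem on each box by convolving the initial data with a tensor-product fundamental solution of $P(D)$ adapted to that box, gluing with a partition of unity, and finally correcting the global defect by an element of $C^\infty_P(V)$ (this last correction requires $P$-convexity of $V$ for supports, since one needs surjectivity of $P(D):C^\infty(V)\to C^\infty(V)$). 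That $\bv^m$ is a topological homomorphism then follows from the open mapping / De Wilde--Grothendieck theorem applied to the spaces of \emph{(LF)}- and \emph{(DF)}-type involved. For cylinders $V=X\times I$, $P$-convexity for supports is classical, so the hypothesis is automatic.

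Part (b) is the borderline case $\sigma=1/a_0$: then $a_0\sigma=1$, the weight $e^{h|t|^{-1/(a_0\sigma-1)}}$ degenerates into a constant, and the weighted space collapses to $C^\infty_P(V\setminus X)$. Every $f\in C^\infty_P(V\setminus X)$ automatically admits a Roumieu-$\sigma$ boundary value by the characterization in (a) together with Mittag-Leffler type density arguments in weight sequences, and the exact sequence and homomorphism statements follow by the same scheme as in (a). The main obstacle I anticipate is the sharp Gevrey-calibrated estimate on the semi-elliptic parametrix at the exact rate $a_0\sigma$: getting both the upper bound (for $(i)\Rightarrow(ii)$) and the converse (for $(iii)\Rightarrow(i)$) at the same numerical threshold is where the anisotropic structure of $P$ has to be exploited most carefully; the remaining functional-analytic steps are then comparatively standard once this matching is established.
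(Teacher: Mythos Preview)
Your high-level plan---specialize the general Theorems~\ref{main-1} and~\ref{main-2} to the Gevrey sequence $M_p=p!^{\sigma}$---is exactly what the paper does, and the identification of $\omega_{M^{a_0,*}}(1/|t|)$ with (a constant times) $|t|^{-1/(a_0\sigma-1)}$ is the correct bridge between $C^\infty_{P,[M],a_0}$ and the Gevrey-weighted space $C^\infty_{P,[a_0\sigma]}$. Two hypotheses you pass over must, however, be verified explicitly. First, the general theorems require $a_0(P)=b_0(P)$; for semi-elliptic $P$ this holds precisely under the assumption $\deg_{x_1}P=\cdots=\deg_{x_d}P$ (Corollary~\ref{cor:semi-elliptic a_0 and b_0}), and it is this equality, not merely the lower bound $|P(\xi,\tau)|\gtrsim |\xi|^n+|\tau|^m$, that makes the machinery applicable. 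Second, they require $p!^{1/\gamma_0}\prec M$ (Beurling) or $p!^{1/\gamma_0}\subset M$ (Roumieu); since $\gamma_0=\min\{1,a_0\}$ here, this is exactly the numerical condition $\sigma>\max\{1,1/a_0\}$ (resp.\ $\geq$) in the statement.

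Your informal account of the internal arguments is less accurate. The implication $(i)\Rightarrow(ii)$ is \emph{not} that ``the weight kills the $t$-growth so one can integrate against $\varphi$'': the bound $|f(x,t)|\leq C\exp(h|t|^{-1/(a_0\sigma-1)})$ blows up as $t\to 0$, and the existence of $\operatorname{bv}(f)$ is genuinely nontrivial. The paper obtains it by combining a Stokes-type identity for $P(D)$ (Lemma~\ref{lemma-Green}) with an almost-zero-solution extension $\Phi$ of the test function (Proposition~\ref{almost-zero}); the decay of $\check P(D)\Phi$ near $t=0$ is what compensates the blow-up of $f$ and turns the limit into an absolutely convergent double integral. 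Similarly, surjectivity of $\operatorname{bv}^m$ is not done by box-by-box Cauchy problems plus partition-of-unity gluing, but by a $\operatorname{Proj}^1$ argument: local surjectivity comes from convolving the compactly supported data with the fundamental solution of Proposition~\ref{theo:existence of good fundamental solutions} (Proposition~\ref{local-surj}), and $P$-convexity for supports yields $\operatorname{Proj}^1$ of the kernel spectrum equal to zero. Finally, for part~(b) no ``Mittag--Leffler density in weight sequences'' is needed: when $\sigma=1/a_0$ one has $p!^{1/a_0}\asymp M$, and by the paper's convention $C^\infty_{P,\{M\},a_0}(V\setminus X)$ is then \emph{defined} to be $C^\infty_P(V\setminus X)$, so the Roumieu versions of Theorems~\ref{main-1} and~\ref{main-2} apply verbatim.
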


Since a partial differential operator $P(D)$ on $\R^{d+1}$ is elliptic precisely when it is semi-elliptic with $\operatorname{deg}_{x_1}P=\cdots=\operatorname{deg}_{x_d}P= \deg_t P$, Theorem \ref{cor:semi-elliptic operators}$(a)$ is applicable to every elliptic partial differential operator. Moreover, it is well-known that every open set $V \subseteq \R^d$ is $P$-convex for supports in such a case.  Therefore, Theorem \ref{cor:semi-elliptic operators}$(a)$ comprises two classical results of Komatsu about the boundary values of holomorphic functions \cite[Section 11]{Komatsu} and harmonic functions \cite[Chapter 2]{Komatsu1991} in ultradistribution spaces. Also the heat operator and, more generally, the $k$-parabolic operators in the sense of Petrowsky  \cite[Definition 7.11]{Treves} satisfy the assumptions of  Theorem \ref{cor:semi-elliptic operators}$(a)$. In particular, this result  extends Matsuzawa's characterization \cite[Theorem 2.1]{Matsuzawa} of compactly supported ultradistributions via boundary values of zero solutions of the heat equation to general ultradistributions. Theorem \ref{cor:semi-elliptic operators}$(b)$ states that the space $\prod_{j = 0}^{m-1}\mathscr{D}'^{\{1/a_0\}}(X)$ may be identified with the space $C^\infty_{P}(V\backslash X)/ C^\infty_{P}(V)$ of formal boundary values. This partially covers the above mentioned result  of Langenbruch \cite[Satz 4.9 and Satz 4.10]{Langenbruch1979-2} (the assumption $\operatorname{deg}_{x_1}P=\cdots=\operatorname{deg}_{x_d}P$ is not needed  there as Langenbruch considers anisotropic Gevrey ultradistribution spaces).

Next, we  comment on the main new technique used in this article. 
In \cite[p.\ 64]{HoermanderPDO1} H\"ormander  showed the existence of distributional boundary values of holomorphic functions by combining Stokes' theorem with  almost analytic extensions. Petzsche and Vogt \cite{P-V} (see also \cite{Petzsche1984}) extended this method to ultradistributions  by using descriptions of ultradifferentiable classes via almost analytic extensions \cite{Dynkin,R-S19,Petzsche1984,P-V}. We develop here a similar technique to establish the existence of ultradistributional boundary values of zero solutions of a hypoelliptic partial differential operator $P(D)$. Namely, we combine  a Stokes type theorem for $P(D)$ (Lemma \ref{lemma-Green}) with the description of tuples $(\varphi_0, \ldots, \varphi_{m-1})$ of compactly supported ultradifferentiable functions  via functions $\Phi \in \mathscr{D}(\R^{d+1})$ that are almost zero solutions of $P(D)$ and satisfy $D_t^j\Phi( \,\cdot \,,0)=\varphi_j$ for  $j=0,\ldots, m-1$  (Proposition \ref{almost-zero}).  
Petzsche \cite{Petzsche1984} constructed almost analytic extensions of ultradifferentiable functions by means of modified Taylor series. We use here the same basic idea, starting from a power series Ansatz $\Phi$ that formally solves the Cauchy problem $P(D)\Phi = 0$ on $\R^{d+1} \backslash \R^d$ and $D_t^j\Phi( \,\cdot \,,0)=\varphi_j$ on $\R^d$ for $j=0,\ldots, m-1$ \cite[Section 4]{Kalmes18-2}. 

This paper is organized as follows. In the preliminary Section \ref{sect:Diff-oper} we collect several results about partial differential operators and ultradistributions that will be used throughout this work. Next, in Section \ref{sect:FS},  we  construct a fundamental solution of a hypoelliptic partial differential operator $P(D)$ satisfying precise regularity and growth properties. This fundamental solution will play an essential role in our work. We obtain it by a careful analysis of the construction of a fundamental solution of $P(D)$ by Langenbruch \cite[p.\ 12--14]{Langenbruch1978}.  Section \ref{sect:almost} is devoted to the description of ultradifferentiable classes via almost zero solutions of $P(D)$. By combining this description with a Stokes type theorem for $P(D)$, we show in Section \ref{sect-bv} that zero solutions of $P(D)$ satisfying certain growth estimates near $\R^d$ have  ultradistributional boundary values. We also establish the continuity of the boundary value mapping here.
Our main results are proven in Section \ref{sec:Main results}. We adapt several classical techniques used in the study of ultradistributional boundary  values of holomorphic functions and harmonic functions \cite{Komatsu,Komatsu1991}. In Sections \ref{sect:almost}--\ref{sec:Main results} we also indicate how our methods may be adapted to the distributional case, thereby providing new proofs of several results of Langenbruch \cite{Langenbruch1978}. Most notably, we obtain an elementary proof of the continuity of the boundary value mapping (compare with \cite[Section 3]{Langenbruch1978}). Finally, in Section \ref{sect-examples}, we discuss our results for  semi-elliptic operators and prove Theorem \ref{cor:semi-elliptic operators}.

\section{Preliminaries}\label{sect:Diff-oper}
\subsection{Partial differential operators}\label{subsect:Diff-oper}
Elements of $\R^{d+1}=\R^d\times\R$ are denoted by $(x,t)=(x_1,\ldots,x_d,t)$. We often identify $\R^d$  with the subspace $\R^d\times\{0\}$ of $\R^{d+1}$. We write $D = (D_x, D_t)$, where $D_x=-i(\partial_{x_1},\ldots,\partial_{x_d})$ and $D_t=-i\partial_t$. Throughout this article $P\in \C[x_1,\ldots,x_d,t]$ always stands for a non-constant hypoelliptic polynomial. Moreover, we use the notation 
$$
P(x,t)=\sum_{k=0}^m Q_k(x)t^k 
$$
for suitable $m \in \N$ and polynomials $Q_k\in\C[X_1,\ldots,X_d]$, $0 \leq k \leq m$, with $Q_m \neq 0$. Then, $Q_m=c\in\C\backslash\{0\}$ \cite[Example 11.2.8]{HoermanderPDO2} and we assume without loss of generality that $c=1$. As customary, we associate to $P$ the constant coefficient partial differential operator $P(D)$ given by
$$
P(D)=P(D_x,D_t)=\sum_{k=0}^m Q_k(D_x)D_t^k.
$$
Given $V \subseteq \R^{d+1}$  open,  we define  $C^\infty_P(V) = \{ f \in C^\infty(V) \, | \, P(D) f = 0\}$ and endow this space with the relative topology induced by $C^\infty(V)$. 

We now introduce various indices of $P$  that play an important role in this article. 
\begin{definition}\label{gevrey}
	We define $(\gamma_0, \mu_0)  = (\gamma_0(P), \mu_0(P))$  as the largest pair of positive numbers satisfying the following property:  There are $C,R > 0$ such that for all $\alpha \in \N^d_0$ and $l \in \N_0$
	$$
	(|x|^{\gamma_0} + |t|^{\mu_0})^{|\alpha| + l } |D^\alpha_xD^l_t P(x,t)| \leq C |P(x,t)|, \qquad |(x,t)| \geq R.
	$$
\end{definition}

The existence of the pair $(\gamma_0,\mu_0)$ follows from the fact that $P$ is hypoelliptic; see \cite[Section 11.4]{HoermanderPDO2} and \cite[Section 7.4]{Treves}. Furthermore, it holds that $\gamma_0, \mu_0 \leq 1$. 
Let $V \subseteq \R^{d+1}$ be open. For  $\sigma, \tau \geq 1$ we denote by $\Gamma^{\sigma,\tau}(V)$ the space consisting of all $\varphi \in C^\infty(V)$ such that for all $K \Subset V$ there is $h > 0$ such that
$$
\sup_{(\alpha,l) \in \N^{d+1}_0} \sup_{(x,t) \in K} \frac{|D^\alpha_xD^l_t\varphi(x,t)|}{h^{|\alpha| + l}|\alpha|!^{\sigma}l!^{\tau} } < \infty. 
$$
By \cite[Theorem 7.4]{Treves}, we have that $C^\infty_P(V) \subset \Gamma^{1/\gamma_0,1/\mu_0}(V)$.
\begin{lemma} \label{lemma-a_0}
	Set 
	$$
	d(x) = \min \{ |\operatorname{Im} \zeta| \, | \, \zeta \in \C, P(x,\zeta) = 0 \}, \qquad x \in \R^d.
	$$
	For $a > 0$ the following two statements are equivalent:
	\begin{itemize}
		\item[$(i)$] There are $C,R > 0$ such that for all $l \in \N_0$
		$$
		|x|^{al} |D^l_t P(x,t)| \leq C |P(x,t)|, \qquad |x| \geq R, t \in \R.
		$$
		\item[$(ii)$] There are $C,R > 0$ such that 
		$$
		|x|^a \leq Cd(x), \qquad |x| \geq R.
		$$
	\end{itemize}
\end{lemma}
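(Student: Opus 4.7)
The plan is to prove each implication directly from the factorisation $P(x,t) = \prod_{j=1}^{m}(t-\zeta_j(x))$, where $\zeta_1(x),\ldots,\zeta_m(x) \in \C$ are the roots of $P(x,\cdot)$ counted with multiplicity; this factorisation is at our disposal because the leading coefficient in $t$ has been normalised to $1$. I will also use that $|D_t^l P| = |\partial_t^l P|$ and that, since $P(x,\cdot)$ has degree $m$, both $D_t^l P$ and $\partial_t^l P$ vanish identically for $l > m$, so (i) only has content for $l \in \{0,\ldots,m\}$.

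For the implication $(ii) \Rightarrow (i)$, a direct application of the Leibniz rule, using that $\partial_t^k(t-\zeta_j(x)) = 0$ for $k \geq 2$, yields
\begin{equation*}
\frac{\partial_t^l P(x,t)}{P(x,t)} \;=\; l!\sum_{\substack{S\subseteq\{1,\ldots,m\}\\|S|=l}}\prod_{j\in S}\frac{1}{t-\zeta_j(x)}
\end{equation*}
whenever $P(x,t)\neq 0$ and $0 \leq l \leq m$. Under (ii) the polynomial $P(x,\cdot)$ has no real roots for $|x| \geq R$, so the identity holds for every $t \in \R$. Since $|t-\zeta_j(x)| \geq |\operatorname{Im}\zeta_j(x)| \geq d(x)$ for real $t$, one obtains $|D_t^l P(x,t)/P(x,t)| \leq l!\binom{m}{l}d(x)^{-l}$. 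Hypothesis (ii) gives $d(x)^{-l}\leq C^l|x|^{-al}$, and taking the maximum over the finite set $l \in \{0,\ldots,m\}$ yields (i) with a uniform constant.

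For the implication $(i) \Rightarrow (ii)$, note first that $D_t^m P = (-i)^m m!$, so (i) applied with $l=m$ gives $m!\,|x|^{am} \leq C|P(x,t)|$ for all $t\in\R$ and $|x|\geq R$. In particular $P(x,\cdot)$ has no real roots for such $x$, so $d(x) > 0$. Choose a root $\zeta_{j_0}(x) = a_0 + ib_0$ with $|b_0| = d(x)$, and set $t_0 = a_0 \in \R$. Because $P(x,\cdot)$ has degree $m$, its Taylor expansion around $t_0$ terminates, hence
\begin{equation*}
0 \;=\; P(x,\zeta_{j_0}(x)) \;=\; P(x,t_0) + \sum_{l=1}^m \frac{(ib_0)^l}{l!}\,\partial_t^l P(x,t_0).
\end{equation*}
Taking moduli and using (i) in the form $|\partial_t^l P(x,t_0)| \leq C|P(x,t_0)|/|x|^{al}$, we find
\begin{equation*}
|P(x,t_0)| \;\leq\; C|P(x,t_0)|\sum_{l=1}^m \frac{1}{l!}\left(\frac{|b_0|}{|x|^a}\right)^l \;\leq\; C|P(x,t_0)|\bigl(e^{|b_0|/|x|^a}-1\bigr).
\end{equation*}
Since $|P(x,t_0)| > 0$, dividing through and rearranging gives $|b_0| \geq |x|^a\log(1+1/C)$, which is exactly (ii) after adjusting the constant.

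The main obstacle is the direction $(i) \Rightarrow (ii)$: one has to convert bounds on all $t$-derivatives of $P$ into a genuine separation of the roots of $P(x,\cdot)$ from the real axis. The decisive idea is to exploit the fact that, $P(x,\cdot)$ having degree $m$, its Taylor expansion around a well-chosen real point terminates at order $m$, so a formal substitution at the complex root $\zeta_{j_0}(x)$ yields an \emph{exact} finite identity rather than an infinite series, which can then be combined with (i) to bound $|b_0|$ from below. The reverse direction is, by comparison, a routine partial-fraction computation.
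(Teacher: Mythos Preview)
Your proof is correct. For $(i)\Rightarrow(ii)$ your argument is essentially identical to the paper's: the paper phrases it contrapositively, expanding $P(x,\zeta)$ in a Taylor series around $\operatorname{Re}\zeta$ and using (i) to show that $P(x,\zeta)\neq 0$ in the strip $|\operatorname{Im}\zeta|\leq \log(1+(2C)^{-1})\,|x|^a$, while you pick a root with minimal imaginary part and run the same Taylor computation to bound that imaginary part from below; the two are the same estimate read in opposite directions.

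For $(ii)\Rightarrow(i)$ you take a genuinely different, though equally short, route. The paper, following the proof of \cite[Lemma 11.1.4]{HoermanderPDO2}, first uses the factorisation to prove the two-sided comparison $|P(x,t+\zeta)|\leq 2^m|P(x,t)|$ for $|\zeta|\leq d(x)$ and then applies Cauchy's inequalities on that disc. You instead differentiate the factorisation directly and read off the partial-fraction identity for $\partial_t^l P/P$. Your approach is a shade more elementary, avoiding the detour through Cauchy's estimate; the paper's approach has the side benefit of producing the uniform bound $|P(x,t+\zeta)|\asymp|P(x,t)|$ on the disc $|\zeta|\leq d(x)$, which is sometimes useful in its own right.
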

\begin{proof}
	The proof is similar to the one of \cite[Lemma 11.1.4]{HoermanderPDO2} and therefore left to the reader.
\end{proof}

\begin{definition}\label{def:a_0}
	We define $a_0 = a_0(P)$ as the largest positive number $a$ such that conditions $(i)$ and $(ii)$ from Lemma \ref{lemma-a_0} are valid. 
\end{definition}
The existence of $a_0$ follows from the fact that $P$ is hypoelliptic and \cite[Corollary A.2.6]{HoermanderPDO2} (cf.\ the proof of \cite[Theorem 11.1.3]{HoermanderPDO2}). Note that $\gamma_0 \leq a_0$. 

\begin{definition}\label{def:b_0}
	We define
	$$
	b_0 = b_0(P) :=  \max \left \{ \frac{\operatorname{deg} Q_k }{m-k} \, | \, k = 0, \ldots, m-1 \right \}.
	$$
\end{definition}

\begin{remark}\label{rem:relation a_0 and b_0}
	By evaluating  the inequalities from Lemma \ref{lemma-a_0}$(i)$ for $t=0$, we obtain that there are $C,R > 0$ such that for all $k = 1, \ldots, m$
	$$|x|^{a_0k}|Q_k(x)|\leq C |Q_0(x)|, \qquad |x| \geq R.$$
	Hence,
	\begin{equation}\label{eq:Q_0 dominates}
		|P(x,|x|^{a_0})|\leq C|Q_0(x)|, \qquad |x| \geq R,
	\end{equation}
	(with  a different $C$) and
	$$a_0\leq\min\left\{\frac{\operatorname{deg} Q_0-\operatorname{deg}Q_k }{k} \, | \, 1 \leq k \leq m\right \}\leq\frac{\operatorname{deg}Q_0}{m}\leq b_0.$$
	Since $a_0\geq 0$, we also have that $\operatorname{deg}Q_k\leq\operatorname{deg}Q_0$  for all $k = 1, \ldots, m$.
\end{remark}

The main results of this article  are valid under the assumption $a_0 = b_0$. In Section \ref{sect-examples} we calculate $a_0(P), b_0(P), \gamma_0(P)$ and $\mu_0(P)$ for semi-elliptic polynomials $P$.

\subsection{Weight sequences}\label{subsect-weight-sequences}
Let $M = (M_p)_{p \in \N_0}$ be a sequence of positive numbers. We write $M_\alpha = M_{|\alpha|}$ for $\alpha \in \N^d_0$. Furthermore, we set $m_p = M_p/M_{p-1}$ for $p \in \N$. Given $a > 0$, we define $M^a =  (M^a_{p})_{p \in \N_0}$ and $M^{a,*} =  (M^a_{p}/p!)_{p \in \N_0}$. 

We consider the following conditions on a positive sequence $M$:
\begin{itemize}
	\item[$(M.1)$]  $M^2_{p} \leq M_{p-1}M_{p+1}$, $p \in \N$.
	\item[$(M.2)$]   $M_{p+q} \leq CH^{p+q} M_{p} M_{q}$, $p,q\in \N_0$, for some $C,H\geq1$.
	\item[$(M.2)^*$] $2m_p \leq m_{Np}$, $p \geq p_0$, for some $p_0,N \in \N$.
	\item[$(M.3)'$]   $\displaystyle \sum_{p = 0}^\infty \frac{1}{M_p^{1/p}} < \infty$. 
\end{itemize}
We refer to  \cite{Komatsu} for the meaning of the conditions $(M.1)$, $(M.2)$ and $(M.3)'$. Condition $(M.2)^*$ was introduced in \cite{BMM} without a name. Note that $(M.1)$ means that the sequence $(m_p)_{p \in \N}$ is non-decreasing. Hence, any positive sequence $M$ with $M_0= M_1=1$ satisfying $(M.1)$ is non-decreasing. Furthermore, it holds that $M^a$, $a > 0$, satisfies $(M.1)$, $(M.2)$ and $(M.2)^*$, respectively, if $M$ does so. On the other hand, condition $(M.3)'$ is not preserved under taking powers.
\begin{definition}
	A sequence $M = (M_{p})_{p \in \N_0}$ of positive numbers is called a \emph{weight sequence} if
	\begin{itemize}
		\item[$(i)$] $M_0 = M_1 = 1$.
		\item[$(ii)$] $M$ satisfies $(M.1)$, $(M.2)$ and $(M.3)'$. 
	\end{itemize}
\end{definition}
The most important examples of weight sequences  are the \emph{Gevrey sequences} $p!^{\sigma}$ with index $\sigma>1$. 

Given two positive sequences $M$ and $N$, the relation $M \subset N$  means that there are $C,L > 0$ such that
$M_p\leq CL^{p}N_{p}$ for all $p\in\N_0$. The stronger relation $M \prec N$ means that the latter inequality is valid for every $L > 0$ and suitable $C > 0$. We write $M \asymp N$ if both $M \subset N$ and $N \subset M$.

The next condition plays an important role in this article.
\begin{definition}
	Let $a>0$. A positive sequence $M$ is said to satisfy condition $(M.4)_a$ if the sequence $(m^{a,*}_p)_{p \in \N}$ is almost non-decreasing, i.e., $m^{a,*}_p \leq Cm_q^{a,*}$, $p,q \in \N$, $p \leq q$, for some $C > 0$.
\end{definition}
The Gevrey sequence  $p!^{\sigma}$ satisfies $(M.4)_a$ if and only if $\sigma \geq1/a$. In the next lemma we state some consequences of the condition $(M.4)_a$.
\begin{lemma}\label{lemma-M4}
	Let $a > 0$. Let $M$ be a positive sequence satisfying $(M.4)_a$.
	\begin{itemize}
		\item[$(i)$] There exists a positive sequence $N$ with $N \asymp M$ such that $N^{a,*}$ satisfies $(M.1)$.
		\item[$(ii)$] Either $p!^{1/a} \prec M$ or $p!^{1/a} \asymp M$ holds.
		\item[$(iii)$] $M$ satisfies $(M.2)^*$.
	\end{itemize}
\end{lemma}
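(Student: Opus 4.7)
The whole argument will rest on a single reformulation of $(M.4)_a$: since $m^{a,*}_p = m_p^a/p$, the almost non-decreasing hypothesis reads $m_p^a/p \leq C m_q^a/q$ for $1 \leq p \leq q$, equivalently $m_q \geq (q/(Cp))^{1/a} m_p$. All three assertions will be derived from this inequality together with a small combinatorial observation, so the real work is in the reformulation; afterwards the three parts split cleanly.

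Assertion $(iii)$ is the quickest: specializing $q = Np$ gives $m_{Np} \geq (N/C)^{1/a} m_p$ for every integer $N \geq 1$ and every $p \geq 1$, so any integer $N \geq 2^a C$ forces $m_{Np} \geq 2 m_p$, which is $(M.2)^*$, in fact with $p_0 = 1$.

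For $(i)$ the plan is to smooth $m^{a,*}_p$ by its running maximum $\widetilde m_p := \max_{1 \leq q \leq p} m^{a,*}_q$, which is manifestly non-decreasing and sandwiched as $m^{a,*}_p \leq \widetilde m_p \leq C m^{a,*}_p$ by $(M.4)_a$. I will then set $N_0 := M_0$ and $n_p := (p\widetilde m_p)^{1/a}$ for $p \geq 1$, so that the derived ratio $n^{a,*}_p = n_p^a/p$ equals $\widetilde m_p$; this gives $(M.1)$ for $N^{a,*}$ for free. The equivalence $N \asymp M$ will follow by a telescoping computation showing $(N_p/M_p)^a = \prod_{q=1}^{p}(\widetilde m_q/m^{a,*}_q) \in [1, C^p]$, whence $M_p \leq N_p \leq C^{p/a} M_p$.

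For $(ii)$ I will dichotomize on the boundedness of $(m^{a,*}_p)_p$, noting first that $(M.4)_a$ already forces $m^{a,*}_p \geq m^{a,*}_1/C > 0$. If the sequence is also bounded above, then $M^{a,*}_p = \prod_{q=1}^p m^{a,*}_q$ is squeezed between two geometric sequences, which unpacks to $M_p \asymp p!^{1/a}$. Otherwise I claim $m^{a,*}_p \to \infty$: given any $L$, pick $p_1$ with $m^{a,*}_{p_1} \geq L$; then $(M.4)_a$ yields $m^{a,*}_q \geq L/C$ for every $q \geq p_1$. This upgrade from ``unbounded'' to ``tends to $\infty$'' is the one step that needs genuine care. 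Once it is in hand, for each $L > 0$ one finds $p_0$ with $m^{a,*}_p \geq L^a$ for $p \geq p_0$, so $M^a_p/p! \geq c_L L^{ap}$ and hence $p!^{1/a} \leq C'_L L^{-p} M_p$; since $L$ can be taken arbitrarily large, this is precisely $p!^{1/a} \prec M$. Everything else reduces to telescoping once the reformulation of $(M.4)_a$ is in place.
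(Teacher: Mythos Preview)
Your argument is correct. The paper's own proof delegates parts $(i)$ and $(iii)$ to external references (\cite[Lemma 8]{R-S18} and \cite[Theorem 3.11]{J-S-S}, respectively) and reduces $(ii)$ to the log-convex case via $(i)$; you instead work everything out directly from the almost non-decreasing hypothesis. In particular: your running-maximum construction for $(i)$ is essentially the content of the cited Rainer--Schindl lemma; your three-line derivation of $(iii)$ from $m_{Np} \geq (N/C)^{1/a} m_p$ is exactly the elementary special case hidden inside the cited Jim\'enez-Garrido--Sanz--Schindl result on indices of O-regular variation; and your handling of $(ii)$ avoids the reduction step by using $(M.4)_a$ once more to upgrade ``unbounded'' to ``$\to \infty$'', which is equivalent in content to the paper's dichotomy on $\lim_p m^{a,*}_p$ after log-convexification. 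The payoff of your route is self-containment; the paper's route is shorter on the page at the cost of two citations.
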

\begin{proof}
	
	$(i)$ This follows from \cite[Lemma 8]{R-S18}.
	
	$(ii)$ By $(i)$, we may assume without loss of generality that $M^{a,*}$ satisfies $(M.1)$, or equivalently, that $(m^{a,*}_p)_{p \in \N}$ is non-decreasing. If $\lim_{p \to \infty} m^{a,*}_p = \infty$, we have that also $\lim_{p \to \infty} (M^{a,*}_p)^{1/p} = \infty$
	and thus $p!^{1/a} \prec M$. If $\lim_{p \to \infty} m^{a,*}_p < \infty$, it is clear that $p!^{1/a} \asymp M$.
	
	$(iii)$ This follows from \cite[Theorem 3.11]{J-S-S}.
\end{proof}
Let $M$ be a positive sequence with $\lim_{p \to \infty} M_{p}^{1/p} =  \infty$. The \emph{associated function} of $M$ is defined as
$$
\omega_M(\rho) = \sup_{p\in\N_0}\log \frac{\rho^pM_0}{M_p},\qquad \rho \geq 0.
$$
Note that $\log \rho = o(\omega_M(\rho))$ as $\rho \to \infty$.  If $M$ satisfies $(M.1)$ and $(M.2)$, then by \cite[Proposition 3.6]{Komatsu}
$$
2\omega_M(\rho) \leq \omega_M(H\rho) + \log C, \qquad \rho \geq 0,
$$
where $C$ and $H$ are the constants occurring in $(M.2)$. Given two positive sequences $M$ and $N$ satisfying $(M.1)$ with  $\lim_{p \to \infty} M_{p}^{1/p} =  \lim_{p \to \infty} N_{p}^{1/p} =  \infty$, it holds that  $N \subset M$ if and only if
$$
\omega_M(\rho) \leq \omega_N(L\rho) + \log C, \qquad \rho \geq 0,
$$
for some $C, L > 0$ \cite[Lemma 3.8]{Komatsu}. Similarly, $N \prec M$ if and only if the latter inequality remains valid for every $L>0$ and  suitable $C>0$ \cite[Lemma 3.10]{Komatsu}.
We have that $\omega_{p!^\sigma}(\rho) \sim {\rho}^{1/\sigma}$ (which means that $\omega_{p!^\sigma}(\rho) = O({\rho}^{1/\sigma})$ and ${\rho}^{1/\sigma} = O(\omega_{p!^\sigma}({\rho}))$).

\begin{remark}\label{associated-1}  Let $N = (N_{p})_{p \in \N_0}$ be a sequence of positive numbers such that $N \asymp 1$, we define the associated function of $N$ as
	\[\omega_N(\rho) := \begin{cases}
		0,&  0 \leq \rho \leq 1 , \\
		\infty, & \rho > 1.\\
	\end{cases}\]
	Whenever we make use of this function we employ the convention  $0 \cdot \infty  = 0$. This applies in particular to $N=M^{a,*}$, where $M$ is a weight sequence satisfying $p!^{1/a}\asymp M$.
\end{remark}

Finally, we present a lemma that will be used later on.

\begin{lemma}\label{lemma:sequences}
	Let $a > 0$ and let $M$ be a weight sequence.  There are $C,L > 0$ such that
	$$
	M_{\lfloor ap \rfloor} \leq CL^p M^a_p, \qquad p \in \N_0.
	$$
\end{lemma}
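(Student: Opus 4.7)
My plan is to split the argument according to whether $a\leq 1$ or $a>1$, relying on the two pillars $(M.1)$ and $(M.2)$ of a weight sequence. The case $p=0$ is trivial since $M_0=1=M_0^a$, so I would assume $p\geq 1$ throughout.

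In the first step I would handle $0<a\leq 1$. The normalization $M_0=M_1=1$ gives $m_1=1$, and since $(M.1)$ makes $(m_p)_{p\in\N}$ non-decreasing, one has $m_p\geq 1$ for all $p$; in particular $M$ is non-decreasing and the discrete sequence $(\log M_p)_{p\in\N_0}$ is non-decreasing and convex. Extending it by piecewise linear interpolation produces a non-decreasing convex function $f\colon [0,\infty)\to[0,\infty)$ with $f(0)=0$ and $f(p)=\log M_p$ for $p\in\N_0$. Because $\lfloor ap\rfloor\leq ap$ and $ap=a\cdot p+(1-a)\cdot 0$, convexity yields
$$\log M_{\lfloor ap\rfloor}=f(\lfloor ap\rfloor)\leq f(ap)\leq a f(p)+(1-a)f(0)=a\log M_p,$$
i.e., the sharper (exponential-factor-free) inequality $M_{\lfloor ap\rfloor}\leq M_p^{a}$, which is the claim with $C=L=1$.

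For $a>1$ I would write $a=k+\theta$ with $k=\lfloor a\rfloor\in\N$ and $\theta\in[0,1)$. Since $kp$ is an integer, $\lfloor ap\rfloor=kp+\lfloor\theta p\rfloor$. Iterating $(M.2)$ finitely many times produces constants $C_k,H_k\geq 1$ depending only on $k$ such that $M_{kp}\leq C_k H_k^{p} M_p^{k}$. Applying $(M.2)$ once more and invoking the first step to control $M_{\lfloor\theta p\rfloor}\leq M_p^{\theta}$, I conclude
$$M_{\lfloor ap\rfloor}\leq C H^{\lfloor ap\rfloor} M_{kp} M_{\lfloor\theta p\rfloor}\leq C C_k (H^{a} H_k)^{p} M_p^{k+\theta}=CC_k(H^aH_k)^p M_p^a,$$
which gives the stated estimate with suitable $C,L>0$.

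There is no serious obstacle here; the key observation is that $(M.1)$ together with $M_0=1$ automatically provides the sub-multiplicative bound $M_{\lfloor ap\rfloor}\leq M_p^{a}$ for $a\in[0,1]$, without any appeal to $(M.2)$. This is what allows the fractional part of $a$ in the case $a>1$ to be absorbed cleanly, leaving only the easy integer-power bound supplied by $(M.2)$ to deal with.
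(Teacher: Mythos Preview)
Your proof is correct but follows a genuinely different route from the paper's. The paper argues via the associated function: it uses the representation $M_p=\sup_{\rho>0}\rho^p e^{-\omega_{M}(\rho)}$ (valid since $M\geq 1$) together with the inequality $a\,\omega_{M^a}(\rho)\leq \omega_{M^a}(L\rho)+\log C$, which is a consequence of $(M.1)$ and $(M.2)$, to pass from $\rho^{\lfloor ap\rfloor}/e^{\omega_M(\rho)}$ to $CL^p M_p^a$ in one stroke, uniformly in $a$. You instead work directly at the sequence level: for $0<a\leq 1$ the log-convexity of $M$ with $M_0=1$ already gives the sharp bound $M_{\lfloor ap\rfloor}\leq M_p^{a}$ without any constants, and for $a>1$ you split $a=k+\theta$, control the integer part $M_{kp}$ by a finite iteration of $(M.2)$, and absorb the fractional part via the first case. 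Your argument is more elementary and self-contained, avoiding the associated-function formalism entirely, and in particular yields $C=L=1$ when $a\leq 1$; the paper's approach has the advantage of being uniform in $a$ and of fitting naturally into the $\omega_M$-based framework used throughout the article.
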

\begin{proof}
	Recall that $M^a$  satisfies $(M.1)$ and $(M.2)$ because $M$ does so. Hence,  there are $C, L > 0$ such that
	$$
	a \omega_{M^a}(\rho)  \leq \omega_{M^a}(L\rho) + \log C, \qquad \rho \geq 0.
	$$
	Note that $\omega_{M^a}(\rho) = a \omega_M(\rho^{1/a})$ for $\rho \geq 0$. By \cite[Proposition 3.2]{Komatsu} and the fact that $M \geq 1$, we have that for all $p \in \N_0$
	\begin{align*}
		&M_{\lfloor ap \rfloor} = \sup_{\rho \geq 1} \frac{\rho^{\lfloor ap \rfloor}}{e^{\omega_M(\rho)}} \leq \sup_{\rho \geq 1} \frac{\rho^{ap}}{e^{\omega_M(\rho)}} = \sup_{\rho \geq 1} \frac{\rho^p}{e^{\omega_M(\rho^{1/a})}}  = \sup_{\rho \geq 1} \frac{\rho^{p}}{e^{\frac{1}{a} \omega_{M^a}(\rho)}}  \\
		&\leq C^{1/a} \sup_{\rho \geq 1} \frac{\rho^{p}}{e^{\omega_{M^a}(\rho/L)}} \leq {C^{1/a}} L^p \sup_{\rho > 0} \frac{\rho^{p}}{e^{\omega_{M^a}(\rho)}} ={C^{1/a}}L^p M^a_p.
	\end{align*}
\end{proof}
\subsection{Ultradistributions}\label{subsect: ultradistributions} Fix a weight sequence $M$. Let $K \Subset \R^d$ be such that $\overline{\operatorname{int} K} = K$ and let $h > 0$. We write $\mathscr{E}^{M,h}(K)$ for the Banach space consisting of all $\varphi \in C^\infty(K)$ such that
$$
\| \varphi \|_{\mathscr{E}^{M,h}(K)} := \sup_{\alpha \in \N_0^d}\sup_{x \in K} \frac{|D^\alpha_x\varphi(x)|}{h^{|\alpha|}M_{\alpha}} < \infty. 
$$
For $X \subseteq \R^d$  open we define the spaces of \emph{ultradifferentiable functions of class $(M)$ and $\{M\}$ (of Beurling and Roumieu type) in $X$} as
$$
\mathscr{E}^{(M)}(X) := \varprojlim_{K \Subset X}\varprojlim_{h \rightarrow 0^+} \mathscr{E}^{M,h}(K), \qquad  \mathscr{E}^{\{M\}}(X) := \varprojlim_{K \Subset X} \varinjlim_{h \rightarrow \infty}\mathscr{E}^{M,h}(K).
$$

We employ $[M]$ as a common notation for $(M)$ and $\{M\}$. In addition, we  often first state assertions for the Beurling case followed in parenthesis by the corresponding ones for the Roumieu case.

Let $K \Subset \R^d$ and let $h >0$. We define $\mathscr{D}^{M,h}_K$ as the Banach space consisting of all $\varphi \in C^\infty(\R^d)$ with $\operatorname{supp} \varphi \subseteq K$ such that $\| \varphi \|_{\mathscr{E}^{M,h}(K)} < \infty$. We set
$$
\mathscr{D}^{(M)}_K := \varprojlim_{h \rightarrow 0^+}\mathscr{D}^{M,h}_K, \qquad  \mathscr{D}^{\{M\}}_K :=  \varinjlim_{h \rightarrow \infty}\mathscr{D}^{M,h}_K.
$$
For $X \subseteq \R^d$ open we define 
$$
\mathscr{D}^{[M]}(X) := \varinjlim_{K \Subset X} \mathscr{D}^{[M]}_K. 
$$

Following Komatsu \cite{Komatsu3,Komatsu1991}, we will sometimes make use of an alternative description of the spaces of ultradifferentiable functions of Roumieu type. We denote by $\mathfrak{R}$ the set of all non-decreasing sequences $h = (h_l)_{l \in \N_0}$  of positive numbers such that $h_0 = h_1 = 1$ and $\lim_{l \to \infty} h_l = \infty$. This set is partially ordered and directed by the pointwise order relation $\leq$ on sequences. We remark that we will use $h$ (and $h'$) to denote both positive numbers and elements of $\mathfrak{R}$. In order to avoid confusion, we will always clearly indicate whether $h > 0$ or $h \in \mathfrak{R}$.

Let $K \Subset \R^d$ be such that $\overline{\operatorname{int} K} = K$ and let $h \in \mathfrak{R}$.  We write $\mathscr{E}^{M,h}(K)$ for the Banach space consisting of all $\varphi \in C^\infty(K)$ such that
$$
\| \varphi \|_{\mathscr{E}^{M,h}(K)} := \sup_{\alpha \in \N_0^d}\sup_{x \in K} \frac{|D^\alpha_x\varphi(x)|}{\prod^{|\alpha|}_{l = 0} h_lM_{\alpha}} < \infty.
$$
Let $K \Subset \R^d$ and let $h \in \mathfrak{R}$. We define $\mathscr{D}^{M,h}_K$ as the Banach space consisting of all $\varphi \in C^\infty(\R^d)$ with $\operatorname{supp} \varphi \subseteq K$ such that $\| \varphi \|_{\mathscr{E}^{M,h}(K)} < \infty$. 

\begin{lemma} \label{projective-description}
	Let $M$ be a weight sequence. 
	\begin{itemize}
		\item[$(i)$]\cite[Proposition 3.5]{Komatsu3} Let $X \subseteq \R^d$ be open. Then,
		$$
		\mathscr{E}^{\{M\}}(X) = \varprojlim_{K \Subset X} \varprojlim_{h \in \mathfrak{R}}\mathscr{E}^{M,h}(K)
		$$
		as locally convex spaces.
		\item[$(ii)$] \cite[Proposition 1.1]{Komatsu1991} Let $K \Subset \R^d$. Then,
		$$
		\mathscr{D}^{\{M\}}_K =  \varprojlim_{h \in \mathfrak{R}} \mathscr{D}^{M,h}_K
		$$
		as locally convex spaces.
	\end{itemize}
\end{lemma}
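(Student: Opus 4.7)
The plan is to deduce both assertions from a single scalar lemma of Komatsu type combined with a two-sided continuity check for the identity map. The scalar lemma reads: for a sequence $(b_n)_{n \in \N_0}$ of non-negative reals, one has $\sup_n b_n/h^n < \infty$ for some constant $h > 0$ if and only if $\sup_n b_n/\prod_{l=0}^n h_l < \infty$ for every $(h_l) \in \mathfrak{R}$. The forward direction uses that any $h \in \mathfrak{R}$ eventually exceeds a prescribed constant, so that $\prod_{l=0}^n h_l \geq c\, h^n$ for large $n$. The converse is a diagonal construction: assuming $\sup_n b_n/h^n = \infty$ for every constant $h > 0$, extract a subsequence $n_k \uparrow \infty$ with $b_{n_k} \geq (2^k)^{n_k}$ and define $(h_l) \in \mathfrak{R}$ piecewise constant, equal to $2^{k-1}$ on $(n_{k-1}, n_k]$; then $b_{n_k}/\prod_{l=0}^{n_k} h_l \geq 2^{n_k} \to \infty$, contradicting the hypothesis.

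To establish (ii), I would first obtain the set-theoretic equality by applying the scalar lemma to $b_n := \sup_{|\alpha|=n, x \in K} |D^\alpha\varphi(x)|/M_n$. Then I would verify that the identity map is bicontinuous. The direction from the LB-topology into the projective topology is routine: for any fixed constant $h_0 > 0$ and any $h \in \mathfrak{R}$, the monotonicity of $h$ and $h_l \to \infty$ give $h_0^n \leq h_0^L \prod_{l=0}^n h_l$ for some $L$ independent of $n$, whence $\|\cdot\|_{\mathscr{E}^{M,h}(K)} \leq h_0^L \|\cdot\|_{\mathscr{E}^{M,h_0}(K)}$ on each Banach step $\mathscr{D}^{M,h_0}_K$. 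The reverse direction is the delicate one: here I would invoke that, under $(M.1)$ and $(M.2)$, the space $\mathscr{D}^{\{M\}}_K$ is a $(DFN)$-space, hence ultrabornological; therefore it suffices to show the identity maps bounded sets to bounded sets. A bounded subset $B$ of the projective-limit side satisfies $c_n := \sup_{\varphi \in B}\sup_{|\alpha|=n, x \in K}|D^\alpha\varphi(x)|/M_n$ is controlled in the sense $\sup_n c_n/\prod_{l=0}^n h_l < \infty$ for every $h \in \mathfrak{R}$, so the scalar lemma provides a constant $h_0 > 0$ with $B$ bounded in $\mathscr{D}^{M,h_0}_K$, hence in $\mathscr{D}^{\{M\}}_K$ by regularity of the LB-space.

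For (i), I would first prove the local analogue of (ii), namely $\varinjlim_{h \to \infty}\mathscr{E}^{M,h}(K) = \varprojlim_{h \in \mathfrak{R}}\mathscr{E}^{M,h}(K)$ for each $K \Subset \R^d$ with $\overline{\operatorname{int} K} = K$. The argument is word-for-word that of (ii), since dropping the support condition does not affect either the scalar lemma or the $(DFN)$-structure under $(M.1)+(M.2)$. Then (i) follows by taking the outer projective limit $\varprojlim_{K \Subset X}$: both descriptions of $\mathscr{E}^{\{M\}}(X)$ are obtained by applying $\varprojlim_{K \Subset X}$ to two locally convex spaces already shown to coincide, and one has $\varprojlim_K \varprojlim_{h \in \mathfrak{R}} = \varprojlim_{(K,h)}$, so the two iterated projective descriptions agree.

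The main obstacle is the topological refinement of the set-theoretic equality, that is, continuity of the identity from the projective topology to the inductive one. A direct seminorm-by-seminorm comparison fails: not every family of upper bounds $\{C_h \|\cdot\|_{\mathscr{E}^{M,h}(K)}\}_{h > 0}$ arising from a continuous LB-seminorm can be majorized by a single $\|\cdot\|_{\mathscr{E}^{M,h}(K)}$ with $h \in \mathfrak{R}$. This is precisely why one must pass through boundedness and the $(DFN)$-property of $\mathscr{D}^{\{M\}}_K$; the scalar lemma is then the translation of this property into the language of the weights $h \in \mathfrak{R}$.
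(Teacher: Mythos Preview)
The paper does not prove this lemma; it simply cites Komatsu's original results. Your proposal therefore cannot be compared to the paper's own argument, but it can be assessed on its merits.

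The scalar lemma and the set-theoretic equality are fine, and the continuity from the $(LB)$-topology into the projective topology is correctly handled. The gap is in the reverse continuity. You write that since $\mathscr{D}^{\{M\}}_K$ is $(DFN)$, hence ultrabornological, ``it suffices to show the identity maps bounded sets to bounded sets.'' But bornologicality is a property of the \emph{domain}: a bounded linear map is automatically continuous when its domain is bornological, not when its codomain is. Here the domain carries the projective topology $\tau_{\mathrm{proj}} = \varprojlim_{h \in \mathfrak{R}}$, and you have not shown that $(\mathscr{D}^{\{M\}}_K, \tau_{\mathrm{proj}})$ is bornological. Since $\mathfrak{R}$ is uncountable, this projective limit is not obviously bornological, nor obviously webbed, so neither De Wilde's closed graph theorem nor his open mapping theorem applies directly.

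Your argument can be repaired, and the ingredients you already have are almost enough. From the scalar lemma you obtain that $\tau_{\mathrm{proj}}$ and $\tau_{LB}$ have the same bounded sets. Since $\tau_{LB}$ is $(DFS)$, it is Montel, so bounded sets are relatively $\tau_{LB}$-compact; as $\tau_{\mathrm{proj}} \leq \tau_{LB}$, the two topologies agree on every such compact set, hence have the same convergent sequences. Now use that $(DFS)$-spaces are sequential (a set is $\tau_{LB}$-closed iff its trace on each Banach step is closed, and the steps are metrizable): any $\tau_{\mathrm{proj}}$-closed set is sequentially $\tau_{\mathrm{proj}}$-closed, hence sequentially $\tau_{LB}$-closed, hence $\tau_{LB}$-closed. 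This gives $\tau_{LB} \leq \tau_{\mathrm{proj}}$ and completes the proof. The same correction applies verbatim to your local version of $(i)$.
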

Let $G(z)  = \sum_{\alpha \in \N_0^d} c_\alpha z^\alpha$,  $c_\alpha \in \C$, be an entire function on $\C^d$. The associated infinite order constant coefficient partial differential operator $G(D_x) = \sum_{\alpha \in \N_0^d} c_\alpha D^\alpha_x$
is said to be an \emph{ultradifferential operator of class $[M]$} if for some $h > 0$ (for all $h > 0$)
$$
\sup_{\alpha \in \N_0^d} \frac{|c_\alpha|M_\alpha}{h^{|\alpha|}} < \infty.
$$ 
By \cite[Proposition 4.5]{Komatsu},
$G(D_x)$ is an ultradifferential operator of class $[M]$ if and only if  for some $h >0$ (for all $h > 0$)
$$
\sup_{z \in \C^d} |G(z)| e^{-\omega_M(h|z|)} < \infty.
$$
Condition $(M.2)$ implies that,  for all $X \subseteq \R^d$ open, the linear mappings $G(D_x) :\mathscr{E}^{[M]}(X) \rightarrow \mathscr{E}^{[M]}(X)$ and $G(D_x) :\mathscr{D}^{[M]}(X) \rightarrow \mathscr{D}^{[M]}(X)$ are continuous. 

Let $X \subseteq \R^d$ be open. We denote by $\mathscr{D}'^{[M]}(X)$ and $\mathscr{E}'^{[M]}(X)$ the strong duals of $\mathscr{D}^{[M]}(X)$ and $\mathscr{E}^{[M]}(X)$. The elements of 
$\mathscr{D}'^{[M]}(X)$ are called \emph{ultradistributions of class $[M]$ in $X$}. The space  $\mathscr{E}'^{[M]}(X)$ may be identified with the subspace of $\mathscr{D}'^{[M]}(X)$ consisting of compactly supported elements. Let $G(D_x)$ be an ultradifferential operator of class $[M]$.  We define the mapping  $G(D_x) : \mathscr{D}'^{[M]}(X) \rightarrow \mathscr{D}'^{[M]}(X)$ via transposition. Hence, $G(D_x): \mathscr{D}'^{[M]}(X) \rightarrow \mathscr{D}'^{[M]}(X)$ becomes a continuous linear mapping. 

Later on, we will make use of the following result about the existence of parametrices, which  is essentially shown in  \cite{Gomez-Collado}. It improves a classical result  of Komatsu \cite{Komatsu1991}, namely,  the strong non-quasianalyticity condition $(M.3)$ \cite{Komatsu} is relaxed to the weaker pair of conditions $(M.2)^*$ and $(M.3)'$; the fact that $(M.3)$ implies $(M.2)^*$ follows from \cite[Theorem 3.11]{J-S-S}. Its proof requires the next lemma.

\begin{lemma}\cite[Lemma 2.3]{Prangoski} \label{R-M2} For every $h \in \mathfrak{R}$ there is $h' \in  \mathfrak{R}$ such that $h'_l \leq h_l$ for $l$ large enough and $\prod_{l = 0}^{p+q} h'_l \leq 2^{p+q}  \prod_{l = 0}^{p} h'_l \prod_{l = 0}^{q} h'_l$ for all $p,q \in \N_0$.
\end{lemma}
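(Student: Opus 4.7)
The plan is to construct $h'$ explicitly as a step function whose levels double at a geometrically spaced sequence of breakpoints. The guiding intuition is that the product condition $\prod_{l=0}^{p+q}h'_l \leq 2^{p+q}\prod_{l=0}^p h'_l\prod_{l=0}^q h'_l$ rearranges to $H_{p+q}/(H_p H_q) \leq 2^{p+q}$ with $H_p := \prod_{l=0}^p h'_l$; if one arranges that $h'_l$ grows roughly like $l$, then $H_p$ grows roughly like $p!$, and the inequality follows from the binomial bound $\binom{p+q}{p} \leq 2^{p+q}$.

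First I would construct breakpoints $0 = l_0 < l_1 < l_2 < \cdots$ inductively. Set $l_1 := 2$, and having defined $l_j$, use the fact that $h_k \to \infty$ (from $h \in \mathfrak{R}$) to choose $l_{j+1}$ with $l_{j+1} \geq 2 l_j$ and $h_{l_{j+1}} \geq 2^{j+1}$ (and, if necessary, adjust $l_1$ so that $h_{l_1} \geq 2$ too). Define
\[
h'_l := 2^j \qquad \text{whenever } l_j \leq l < l_{j+1}.
\]
Then $h'_0 = h'_1 = 1$, the sequence is non-decreasing, and $h'_l \to \infty$ since $l_j \to \infty$, so $h' \in \mathfrak{R}$. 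Moreover, for $l \geq l_1$, picking $j \geq 1$ with $l_j \leq l < l_{j+1}$ gives $h'_l = 2^j \leq h_{l_j} \leq h_l$, so $h' \leq h$ eventually.

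The technical heart of the proof is verifying the product condition. Write $H_p := \prod_{l=0}^p h'_l$ and let $J(p)$ denote the unique index with $l_{J(p)} \leq p < l_{J(p)+1}$. A direct computation using the step structure gives
\[
\log_2 H_p = \sum_{j=1}^{J(p)-1} j(l_{j+1} - l_j) + J(p)(p + 1 - l_{J(p)}),
\]
and because $l_{j+1} \geq 2 l_j$ forces $l_j \geq 2^j$, one checks that $H_p$ is comparable to $p!$ in the sense that $\log_2 H_p = p \log_2 p + O(p)$. With this in hand the desired inequality $H_{p+q} \leq 2^{p+q} H_p H_q$ reduces (up to constants that can be absorbed by enlarging the $l_j$ at the outset) to the elementary binomial estimate $\binom{p+q}{p} \leq 2^{p+q}$. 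The main obstacle is book-keeping: one must cleanly treat the three cases for where $p$, $q$, and $p+q$ fall relative to the breakpoints $\{l_j\}$, and ensure that the crude bound produced is exactly $2^{p+q}$ rather than $C^{p+q}$ for some $C > 2$. If the constant $2$ is only achievable as a generic $C$, one can reabsorb it by increasing all breakpoints by a uniform multiplicative factor before running the argument.
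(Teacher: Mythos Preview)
The paper does not prove this lemma; it simply cites it from Prangoski's paper \cite{Prangoski}, so there is no in-paper argument to compare against. I can only assess your sketch on its own merits.

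Your construction is natural, but the justification of the product inequality has a real gap. The assertion that ``$\log_2 H_p = p\log_2 p + O(p)$'' relies on $h'_l$ being comparable to $l$, which in turn requires $l_j$ to be comparable to $2^j$. But the breakpoints are dictated by the growth of $h$: if $h$ tends to infinity very slowly (say $h_l=\log\log l$), then to achieve $h_{l_{j+1}}\geq 2^{j+1}$ you are forced to take $l_j$ enormously larger than $2^j$, and then $h'_l$ is far smaller than $l$ and $H_p$ is far smaller than $p!$. So the reduction ``$H_{p+q}/(H_pH_q)\approx\binom{p+q}{p}$'' is not valid as stated. Your fallback---rescaling all breakpoints by a uniform factor---does not repair this: scaling preserves the ratio structure $l_{j+1}\geq 2l_j$ and hence the same obstruction reappears. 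In fact, a direct estimate using only $l_{j+1}\geq 2l_j$ yields $h'_m/h'_l\leq 2m/l$ for $1\leq l\leq m$, which after telescoping gives $H_{p+q}\leq 2^{\min(p,q)}\binom{p+q}{p}H_pH_q\leq 2^{(3/2)(p+q)}H_pH_q$; you obtain a constant but not the constant $2$.

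A clean way to get exactly $2$ is to arrange that $h'_l/l$ is non-increasing for $l\geq 1$ (with $h'_0=h'_1=1$). Then for $p\leq q$,
\[
\frac{H_{p+q}}{H_pH_q}=\prod_{l=1}^{p}\frac{h'_{q+l}}{h'_l}\leq \prod_{l=1}^{p}\frac{q+l}{l}=\binom{p+q}{p}\leq 2^{p+q},
\]
which is the inequality you want with no loss. One then only has to produce such an $h'\in\mathfrak{R}$ below $h$ eventually, which is an elementary minorant construction. Your step-function idea can be made to work, but as written the crucial estimate is not established.
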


\begin{lemma}\label{parametrix}
	Let $M$ be a weight sequence satisfying $(M.2)^*$.  Let $h > 0$ ($h \in \mathfrak{R}$) and let $r > 0$. There exist an ultradifferential operator $G(D_x)$ of class $[M]$, $u_1 \in  \mathscr{D}^{M,h}_{\overline{B}(0,r)}$ and $u_2 \in  \mathscr{D}^{[M]}_{\overline{B}(0,r)}$ such that $G(D_x) u_1 = \delta + u_2$ in $\mathscr{D}'^{[M]}(\R^d)$. 
\end{lemma}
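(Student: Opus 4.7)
The plan is to adapt Komatsu's classical parametrix construction (see \cite[Chapter 1]{Komatsu1991}) by inserting the refinements from \cite{Gomez-Collado} that allow one to replace the strong non-quasianalyticity condition $(M.3)$ by the weaker $(M.2)^*$ and $(M.3)'$. Using $(M.3)'$, I first choose a non-decreasing sequence $(a_j)_{j \in \N}$ of positive numbers comparable to $(m_j / L)_{j\in\N}$, where $L>0$ is a parameter calibrated to the prescribed $h$; by $(M.1)$ and $(M.3)'$ one then has $\sum_j 1/a_j < \infty$. The entire function
\[
G(z) := \prod_{j=1}^{\infty} \Bigl( 1 + \frac{z_1^2+\cdots+z_d^2}{a_j^2} \Bigr), \qquad z \in \C^d,
\]
satisfies $|G(z)| \leq C e^{\omega_M(L'|z|)}$ for some $L'=L'(L)$, so $G(D_x)$ is an ultradifferential operator of class $[M]$. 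In the Roumieu case, where $h \in \mathfrak{R}$ is given, I first replace $h$ by the tame majorant $h'$ provided by Lemma \ref{R-M2}, and build the $(a_j)$ from the products $\prod_{l=0}^{j} h'_l$ in place of $h^j$.

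Let $E_j$ denote the standard radial fundamental solution of $1 - \Delta/a_j^2$ on $\R^d$. It is analytic off the origin, exhibits the usual Newton-type singularity at $0$, and decays like $e^{-a_j|x|/2}$ at infinity. Because $\sum_j 1/a_j < \infty$, the formal infinite convolution $E := E_1 \ast E_2 \ast \cdots$ converges in $\mathscr{D}'(\R^d)$ to a fundamental solution of $G(D_x)$ that is smooth away from $0$ and whose derivatives admit pointwise bounds of the shape $|D^\alpha E(x)| \leq C' h^{|\alpha|} M_\alpha$ on any annular region $\varepsilon \leq |x| \leq r$. Now select a cutoff $\chi \in \mathscr{D}^{[M]}_{\overline{B}(0,r)}$ with $\chi \equiv 1$ on a neighbourhood $U$ of $0$; the existence of such a $\chi$ with a controlled quantitative bound follows from $(M.3)'$ and $(M.2)^*$ via the standard Denjoy--Carleman construction of ultradifferentiable cutoffs. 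Set
\[
u_1 := \chi E, \qquad u_2 := G(D_x) u_1 - \delta.
\]
Both are supported in $\overline{B}(0,r)$; moreover $u_2 = [G(D_x), \chi] E$ is supported in $\operatorname{supp}(\nabla\chi) \subseteq \overline{B}(0,r) \setminus U$, where $E$ is real-analytic.

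The verifications are then twofold. For $u_1$, combine Leibniz with the pointwise bounds on $D^\alpha E$ and $D^\alpha \chi$; condition $(M.2)$ handles the product, while $(M.2)^*$ together with the calibration of $(a_j)$ forces the result to land in $\mathscr{D}^{M,h}_{\overline{B}(0,r)}$ with exactly the prescribed $h$. For $u_2$, the analyticity of $E$ on $\operatorname{supp}(\nabla\chi)$ reduces all estimates to derivatives of $\chi$ alone, so that $u_2 \in \mathscr{D}^{[M]}_{\overline{B}(0,r)}$ is automatic. I expect the main obstacle to be this quantitative step of placing $u_1$ in precisely $\mathscr{D}^{M,h}_{\overline{B}(0,r)}$ (rather than in a class corresponding to a larger constant): without $(M.3)$, one cannot rely on the classical multiplicative manipulation of associated functions, and $(M.2)^*$ together with Lemma \ref{R-M2} must step in at several points to recover the multiplicative control matching the prescribed $h$.
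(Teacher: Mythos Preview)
Your overall strategy---the infinite product $G(z)=\prod_j(1+|z|^2/a_j^2)$, the infinite convolution of the elementary fundamental solutions, and the cutoff---is precisely the construction that sits \emph{inside} the reference the paper invokes. The paper's proof is far shorter because it treats all of this as a black box: it observes (via \cite[Theorem~14 and Eq.~(5)]{BMM}) that $\omega_M$ is a weight function in the sense of \cite{Gomez-Collado}, and then quotes \cite[Proposition~2.5]{Gomez-Collado} to obtain directly an ultradifferential operator $G(D_x)$ and a function $u\in C^\infty(\R^d)$ with the \emph{global} bound $\sup_{x\in\R^d}|D^\alpha u(x)|\le (h/2)^{|\alpha|}M_\alpha$, real-analytic off the origin, and $G(D_x)u=\delta$. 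Multiplication by a cutoff then finishes the job in one line. The Roumieu reduction via Lemma~\ref{R-M2} is handled the same way in both approaches.

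There is, however, a genuine gap in your proposal as written. You assert that $E$ is ``smooth away from $0$'' with derivative bounds only ``on any annular region $\varepsilon\le|x|\le r$'', and then set $u_1=\chi E$ with $\chi\equiv 1$ on a neighbourhood $U$ of~$0$. But on $U$ you have $u_1=E$, so for $u_1\in\mathscr{D}^{M,h}_{\overline{B}(0,r)}$ you need $E\in C^\infty$ at the origin as well, with the $M$-bounds holding uniformly there---annular estimates and Leibniz with~$\chi$ do nothing for you near~$0$. The point you are missing is that the \emph{infinite} convolution (equivalently: $\widehat{E}=1/G$ with $G$ of infinite order) regularises completely: unlike a finite-order fundamental solution, $E$ is $C^\infty$ on all of $\R^d$, and the estimate $|D^\alpha E(x)|\le C'h^{|\alpha|}M_\alpha$ holds globally. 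Establishing this global smoothness and the uniform bound (under $(M.2)^*$ and $(M.3)'$ rather than $(M.3)$) is exactly the nontrivial content of \cite[Proposition~2.5]{Gomez-Collado}, which is why the paper simply cites it. If you want a self-contained argument, that is the step you must actually carry out; the ``main obstacle'' is not the Leibniz product with $\chi$ but the behaviour of $E$ at the origin.
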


\begin{proof}
	We first consider the Beurling case. By \cite[Theorem 14]{BMM}, the function $\omega_M$ is a weight function in the sense of \cite[Definition 1.1]{Gomez-Collado}. Moreover, there {are} $n \in \N$ and $C > 0$ such that \cite[Equation (5)]{BMM}
	$$
	e^{n\varphi^*_{\omega_M}\left( \frac{p}{n} \right)} \leq C(h/2)^p M_p, \qquad p \in \N_0,
	$$
	where $\varphi^*_{\omega_M}$ denotes the Young conjugate of $\varphi_{\omega_M}: [0,\infty) \to [0,\infty), \varphi_{\omega_M}(t) = \omega_M(e^t)$ \cite{Gomez-Collado}.  Hence, \cite[Proposition 2.5]{Gomez-Collado} and conditions $(M.1)$ and $(M.2)$ imply that there are an ultradifferential operator $G(D_x)$ of class $(M)$ and $u \in C^\infty(\R^d)$ such that
	$$
	\sup_{\alpha \in \N_0^d} \sup_{x \in \R^d}  \frac{|D^\alpha_x u(x)|}{(h/2)^{|\alpha|} M_\alpha} < \infty,
	$$
	$u$ is real analytic in $\R^d \backslash \{0 \}$ and $G(D_x) u = \delta$ in $\mathscr{D}'^{(M)}(\R^d)$. Choose $\psi \in    \mathscr{D}^{(M)}_{\overline{B}(0,r)}$ such that $\psi = 1$ in a neighborhood of $0$. Then, $u_1  = \psi u \in   \mathscr{D}^{M,h}_{\overline{B}(0,r)}$ and $u_2 = G(D_x) (\psi u - u) \in  \mathscr{D}^{(M)}_{\overline{B}(0,r)}$ satisfy $G(D_x) u_1 = \delta + u_2$ in $\mathscr{D}'^{(M)}(\R^d)$. Next, we treat the Roumieu case. Note that the positive sequence $N = (\prod_{l=0}^p h_l M_p)_{p \in \N_0}$ satisfies $(M.1)$, $(M.3)'$  and $(M.2)^*$ because $M$ does so. Furthermore, by Lemma \ref{R-M2}, we may assume without loss of generality that $N$ also satisfies $(M.2)$.  Hence, the result can be shown by applying the same reasoning as in the Beurling case to $M = N$ and $h = 1$.
\end{proof}

Finally, we introduce some notation from classical distribution theory. For $K \Subset \R^d$ we denote by $\mathscr{D}_K$ the Fr\'echet space consisting of all smooth functions with support in $K$. For $X \subseteq \R^d$ open we define
$$
\mathscr{D}(X) : = \varinjlim_{K \Subset X} \mathscr{D}_K.
$$
For $l \in \N_0$ we define the norm
$$
\| \varphi \|_{l} = \sup_{x \in \R^d} \max_{|\alpha| \leq l} |D^\alpha_x \varphi(x)|,  \qquad \varphi \in \mathscr{D}(\R^d).
$$
In this context, we sometimes also write $\mathscr{E}(X) = C^\infty(X)$.  We denote by $\mathscr{D}'(X)$ and $\mathscr{E}'(X)$ the strong duals of $\mathscr{D}(X)$ and $\mathscr{E}(X)$, respectively. 
The space  $\mathscr{E}'(X)$ may be identified with the subspace of $\mathscr{D}'(X)$ consisting of compactly supported elements.

\section{A fundamental solution with good regularity and growth properties}\label{sect:FS}
The goal of this auxiliary section is to construct a fundamental solution of $P(D)$ with precise regularity and growth properties.  The next result and its proof are inspired by \cite[p.\ 12--14]{Langenbruch1978}.

\begin{proposition}\label{theo:existence of good fundamental solutions}
	There exists a fundamental solution $E\in\mathscr{D}'(\R^{d+1})$ of $P(D)$ satisfying the following  property: 
	There are $A, L ,S > 0$ such that 
	\begin{equation}\label{improved-regularity} 
		\sup_{(\alpha,l) \in \N_0^{d+1}} \sup_{(x,t) \in \R^{d+1} \backslash \R^d}  \frac{| D^\alpha_x D^l_t  E(x,t)| |t|^{|\alpha|/a_0 + l/\mu_0 + S}e^{-A|t|}} {L^{|\alpha| + l} |\alpha| !^{1/a_0} l!^{1/\mu_0}} < \infty.
	\end{equation}
	In particular, there is $L > 0$ such that for all $l \in \N_0$ there is $S > 0$ such that for all $r > 0$
	\begin{equation}\label{eq:regularity}
		\sup_{\alpha \in \N_0^d} \sup_{\substack{x \in \R^d \\ 0 < |t| \leq r}}  \frac{| D^\alpha_x D^l_t  E(x,t)| |t|^{|\alpha|/a_0 +S}} {L^{|\alpha|} |\alpha| !^{1/a_0}} < \infty.
	\end{equation}		
\end{proposition}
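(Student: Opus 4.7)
My plan is to adapt the construction in \cite[p.\ 12--14]{Langenbruch1978}, which builds $E$ by an inverse partial Fourier transform in the $x$-variable, and then extract the sharp bounds in \eqref{improved-regularity} directly from the explicit formula. Fix $R$ large enough that the inequalities in Definition \ref{gevrey}, Lemma \ref{lemma-a_0}, and Remark \ref{rem:relation a_0 and b_0} hold for $|\xi|\geq R$ and so that $P(\xi,\cdot)$ has no real roots there. Pick $\chi \in \mathscr{D}(\R^d)$ equal to $1$ on $\overline{B}(0,R)$, denote the roots of $\tau\mapsto P(\xi,\tau)$ by $\tau_1(\xi),\ldots,\tau_m(\xi)$, and split them into $J_\pm(\xi)$ according to the sign of $\operatorname{Im}\tau_j(\xi)$. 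Define the principal piece $E_1$ through its partial Fourier transform
\[
\widehat{E_1}(\xi,t) = \mp i(1-\chi(\xi))\sum_{j\in J_\pm(\xi)}\frac{e^{it\tau_j(\xi)}}{\partial_\tau P(\xi,\tau_j(\xi))}, \qquad \pm t>0,
\]
so that $P(\xi,D_t)\widehat{E_1}(\xi,t)=(1-\chi(\xi))\delta(t)$. Then $P(D)E_1-\delta$ is a smooth function times $\delta(t)$, which can be removed by adding a smooth correction $E_0 \in C^\infty(\R^{d+1})$ satisfying $P(D)E_0 = \delta - P(D)E_1$; the resulting $E := E_1+E_0$ is a fundamental solution, and since $E_0$ is smooth on all of $\R^{d+1}$ its contribution to $D^\alpha_x D^l_t E$ is absorbed by the factor $|t|^{-S}e^{A|t|}$ on the right-hand side of \eqref{improved-regularity}.

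The core quantitative step is to bound $D^\beta_\xi D^l_t \widehat{E_1}(\xi,t)$ for $|\xi|\geq 2R$ and $t\neq 0$. Lemma \ref{lemma-a_0}$(ii)$ combined with Definition \ref{def:a_0} gives $|\operatorname{Im}\tau_j(\xi)|\geq c|\xi|^{a_0}$, hence $|e^{it\tau_j(\xi)}|\leq e^{-c|t||\xi|^{a_0}}$; the normalization $Q_m=1$ and Definition \ref{def:b_0} give $|\tau_j(\xi)|\leq C\langle\xi\rangle^{b_0}$; and Cauchy's formula applied on a disk of radius proportional to $|\xi|^{a_0}$ around each $\tau_j(\xi)$ yields $|D^\beta_\xi \tau_j(\xi)|\leq CL^{|\beta|}\beta!\,\langle\xi\rangle^{b_0-|\beta|a_0}$. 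Assembling these through the Fa\`a di Bruno formula, I would establish an estimate of the shape
\[
|D^\beta_\xi D^l_t \widehat{E_1}(\xi,t)| \leq C L^{|\beta|+l}\beta!\,l!\,\langle\xi\rangle^{N_0+b_0l-|\beta|a_0}\,e^{-c|t||\xi|^{a_0}}
\]
for a fixed $N_0$ depending only on $P$. Multiplying by $\xi^\alpha$ and integrating over $\R^d$, the bound for $D^\alpha_x D^l_t E_1$ reduces to the elementary integral $\int_0^\infty r^{N+d-1}e^{-c|t|r^{a_0}}\,dr \leq C'C^N N!^{1/a_0}|t|^{-(N+d)/a_0}$ (valid for $0<|t|\leq 1$), which directly produces the Gevrey factor $|\alpha|!^{1/a_0}$ and the weight $|t|^{-|\alpha|/a_0}$ required by \eqref{improved-regularity}.

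The main obstacle, and the delicate part of the construction, is sharpening the $t$-index to $1/\mu_0$: the naive estimate outlined above treats each $t$-derivative as a multiplier by $\tau_j(\xi)$ of size $\langle\xi\rangle^{b_0}$, which only yields Gevrey order $b_0/a_0$ in $l$. My strategy is to exploit that $\widehat{E_1}$ solves $P(\xi,D_t)\widehat{E_1}(\xi,t)=0$ for $t\neq 0$, so any $t$-derivative of order $\geq m$ can be rewritten recursively as a linear combination of $Q_k(\xi)D^k_t\widehat{E_1}$ with $k<m$; iterating this reduction and invoking the balance inequality $(|x|^{\gamma_0}+|t|^{\mu_0})^{|\alpha|+l}|D^\alpha_x D^l_t P|\leq C|P|$ from Definition \ref{gevrey} exactly in the regime where $|t|^{\mu_0}$ dominates $|x|^{\gamma_0}$, one converts the $\langle\xi\rangle^{b_0 l}$ growth into the sharp weight $|t|^{-l/\mu_0}$ with the optimal Gevrey factor $l!^{1/\mu_0}$ after applying the integral bound above to the reduced integrands. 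The remaining polynomial prefactors, the low-frequency term, and the smooth correction $E_0$ are absorbed into the constants $S$ and $A$ of \eqref{improved-regularity}. Finally, \eqref{eq:regularity} follows immediately from \eqref{improved-regularity} upon fixing $l$ and restricting to $|t|\leq r$: the exponential $e^{A|t|}$ becomes bounded, and the constant $l!^{1/\mu_0}r^{-l/\mu_0}$ can be absorbed into an enlarged $S$ depending on $l$ and $r$.
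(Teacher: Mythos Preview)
Your approach via partial Fourier transform and root decomposition is genuinely different from the paper's. The paper works with the \emph{full} Fourier transform in $(x,t)$, defining $E$ by $\langle E,\varphi\rangle=(2\pi)^{-(d+1)}\int\widehat{\varphi}(\xi,\eta)/P(-\xi,-\eta)\,d\eta\,d\xi$ (with a contour shift $\eta\mapsto\eta+iA$ on the low-frequency region $|\xi|\leq R$). The key device is to test $t^{p}D_x^\alpha D_t^l E$ against product test functions $\psi(x)\chi(t)$: on the Fourier side this produces $\int\xi^\alpha\widehat\psi(\xi)\widehat\chi(\eta)\,D_\eta^{p}\bigl(\eta^l/P(-\xi,-\eta)\bigr)\,d\eta\,d\xi$, and Lemma~\ref{lemma:derivatives of reciprocal} bounds the inner factor by $Cp!\,(|\xi|^{a_0}+|\eta|^{\mu_0})^{-p}$ via a Cauchy estimate that uses the combined inequality $(|\xi|^{a_0}+|\eta|^{\mu_0})^l|D_\eta^l P|\leq C|P|$ directly. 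Choosing $p=\lceil(|\alpha|+d+1)/a_0+(l+2)/\mu_0\rceil$ makes the integral converge and, via $p!\leq CL^{|\alpha|+l}|\alpha|!^{1/a_0}l!^{1/\mu_0}$, produces both Gevrey exponents in one stroke; an approximate-identity argument then upgrades the distributional bound to the pointwise one. No root analysis, no splitting into regimes.

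Your route is salvageable, but the mechanism you propose for the sharp $1/\mu_0$ exponent has a genuine gap. The recursion $D_t^m\widehat{E_1}=-\sum_{k<m}Q_k(\xi)D_t^k\widehat{E_1}$ only re-expresses higher $t$-derivatives through polynomials in $\xi$, and ``invoking the balance inequality from Definition~\ref{gevrey} in the regime where $|t|^{\mu_0}$ dominates $|x|^{\gamma_0}$'' does not make sense here: you are on the Fourier side in $\xi$ with physical $t$, and Definition~\ref{gevrey} concerns the polynomial $P$ at real arguments, not the coefficients generated by your recursion. What actually delivers $1/\mu_0$ in your framework is the root estimate $|\operatorname{Im}\tau_j(\xi)|\geq c\,|\tau_j(\xi)|^{\mu_0}$ (in addition to $\geq c|\xi|^{a_0}$), which follows from the combined inequality $(|\xi|^{a_0}+|\eta|^{\mu_0})^l|D_\eta^l P(\xi,\eta)|\leq C|P(\xi,\eta)|$ by the Cauchy-disk argument of Lemma~\ref{lemma-a_0} applied at the real point $\eta=\operatorname{Re}\tau_j(\xi)$. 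Once you have it, $|\tau_j|^l e^{-|t||\operatorname{Im}\tau_j|}\leq |\tau_j|^l e^{-c|t||\tau_j|^{\mu_0}/2}e^{-c|t||\xi|^{a_0}/2}\leq C L^l l!^{1/\mu_0}|t|^{-l/\mu_0}e^{-c|t||\xi|^{a_0}/2}$, and the rest of your integral argument goes through.

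Two smaller issues. First, your correction $E_0$ cannot lie in $C^\infty(\R^{d+1})$, since $P(D)E_0=\mathcal{F}^{-1}_\xi[\chi](x)\,\delta(t)$ is not smooth; it is only smooth on $\R^{d+1}\setminus\R^d$, which is all you need, but you must still supply uniform Gevrey bounds for it there (easy, since $\widehat{E_0}(\cdot,t)$ is supported in $|\xi|\leq R$ and the roots are bounded on that set). Second, your residue formula $\sum_j e^{it\tau_j}/\partial_\tau P(\xi,\tau_j)$ is only valid for simple roots; to handle coalescence you should replace it by a contour-integral representation $\widehat{E_1}(\xi,t)=\pm(2\pi i)^{-1}\oint_{\Gamma_\pm(\xi)}e^{it\tau}/P(\xi,\tau)\,d\tau$, to which the same Cauchy-type bounds apply uniformly. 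The paper's full-Fourier approach sidesteps both complications.
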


We need some preparation for the proof of Proposition \ref{theo:existence of good fundamental solutions}. Definition \ref{gevrey} and Definition \ref{def:a_0} imply that there are $C,R \geq 1$ such that for all $l \in \N_0$ 
$$
(|x|^{a_0} + |t|^{\mu_0})^l |D^l_t P(x,t)| \leq C |P(x,t)|, \qquad |x| \geq R, t \in \R.
$$
Throughout this section the constants $C$ and $R$ will always refer to those occurring in the above inequality. 
We need the following lemma.

\begin{lemma}\label{lemma:derivatives of reciprocal}
	\begin{itemize}
		\item[$(i)$] There are $C_1,L_1 > 0$ such that for all $l,p\in \N_0$ 
		$$
		\left|D^p_{t} \left( \frac{t^l}{P(x,t)} \right)\right| \leq \frac{C_1L_1^{p+l} p!  (|x|^{a_0} + \max\{1,|t|\})^l}{(|x|^{a_0} + |t|^{\mu_0})^p}, \qquad |x| \geq R,  t \in \R. 
		$$
		\item[$(ii)$] There are $A > 0$ and $C_2,L_2 > 0$ such that for all $l,p \in \N_0$
		$$
		\left|D^p_{t} \left( \frac{(t+iA)^l}{P({x,t+iA})} \right)\right| \leq \frac{C_2L_2^{p+l} p!}{(1 + |t|)^{p-l}}, \qquad |x| \leq R,  t \in \R. 
		$$
	\end{itemize}
\end{lemma}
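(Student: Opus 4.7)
Both estimates will be derived by the same mechanism: Cauchy's integral formula in the complex variable $\tau$ applied to the meromorphic function $\tau\mapsto\tau^l/P(-x,-\tau)$, with the contour chosen so that $P(-x,-\tau)$ admits a sharp lower bound on the enclosed disc. The lower bound rests on different structural facts in the two regimes --- hypoellipticity of $P$ for $|x|\ge R$, and the absence of zeros of $P(-x,\cdot)$ far from the real axis for $|x|\le R$.

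\emph{Part (i).} Writing $\widetilde P(x,\tau):=P(-x,-\tau)$, which is a polynomial of degree $m$ in $\tau$, the Taylor expansion around a real point $t$ is an exact identity. Feeding the hypoellipticity estimate $(|x|^{a_0}+|t|^{\mu_0})^l|D_t^l P(x,t)|\le C|P(x,t)|$ (valid for $|x|\ge R$, $t\in\R$) into this identity and choosing $\epsilon>0$ small enough yields
\[
|\widetilde P(x,\tau)|\ge\tfrac12|\widetilde P(x,t)|,\qquad|\tau-t|\le\epsilon(|x|^{a_0}+|t|^{\mu_0}),\ |x|\ge R.
\]
Moreover, the case $l=m$ of the same estimate, combined with $D_t^m P=m!$ (since $Q_m=1$), gives $|\widetilde P(x,t)|\ge(m!/C)R^{a_0m}>0$ for $|x|\ge R$. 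Cauchy's formula on the disc of radius $r=\epsilon(|x|^{a_0}+|t|^{\mu_0})$ about $t$ then produces
\[
\left|D_t^p\!\left(\frac{t^l}{\widetilde P(x,t)}\right)\right|\le\frac{p!}{r^p}\cdot\frac{(|t|+r)^l}{\tfrac12|\widetilde P(x,t)|},
\]
and splitting the sub-cases $|t|<1$ and $|t|\ge1$ (using $\mu_0\le1$) one checks $|t|+r\le C'(|x|^{a_0}+\max\{1,|t|\})$. Absorbing the uniform lower bound on $|\widetilde P|$ into the constant yields the claim.

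\emph{Part (ii).} For $|x|\le R$ the roots of $P(-x,\cdot)$ stay in a bounded subset of $\C$ (bounded coefficients, leading coefficient $1$), so the set $B:=\{-\operatorname{Im}\zeta:|x|\le R,\ P(-x,\zeta)=0\}$ is bounded; pick $A>0$ larger than $\sup B$ plus a safety margin. Define $r(t):=c_0\max\{1,|t|\}$ with $c_0>0$ small. A case analysis applied to $|P(-x,-\zeta)|=\prod_j|\zeta-\tau_j|$ (where $\tau_j$ are the roots of $P(-x,-\cdot)$) --- using $|\operatorname{Re}(\zeta-\tau_j)|$ in the large-$|t|$ range and $|\operatorname{Im}(\zeta-\tau_j)|\ge A/2$ in the bounded-$|t|$ range --- produces
\[
|P(-x,-\zeta)|\ge c(1+|t|)^m,\qquad|\zeta-(t+iA)|\le r(t),\ |x|\le R.
\]
Since $r(t)\ge c_0(1+|t|)/2$ and $|\tau|\le|t|+A+r(t)\le C''(1+|t|)$ on this disc, Cauchy's formula gives
\[
\left|D_t^p\!\left(\frac{(t+iA)^l}{P(-x,-t-iA)}\right)\right|\le\frac{p!}{r(t)^p}\cdot\frac{(C'')^l(1+|t|)^l}{c(1+|t|)^m}\le\frac{C_2L_2^{p+l}\,p!}{(1+|t|)^{p+m-l}},
\]
which implies the asserted bound since $m\ge0$.

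The main technical hurdle is the pair of case analyses securing uniform lower bounds on $|\widetilde P|$ and $|P(-x,-\zeta)|$ over the appropriate discs; once these are in hand, the estimates follow from a direct application of Cauchy's inequalities.
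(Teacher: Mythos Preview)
Your proof is correct and follows essentially the same approach as the paper: both parts apply Cauchy's inequalities on a disc whose radius is comparable to $|x|^{a_0}+|t|^{\mu_0}$ in $(i)$ and to $1+|t|$ in $(ii)$, after securing a uniform lower bound for $|P|$ on that disc. The only noteworthy difference is in part $(ii)$: the paper obtains the lower bound more directly from the monic form $|P(x,\zeta)|\ge|\zeta|^m-\sum_{k<m}|Q_k(x)||\zeta|^k\ge\tfrac12$ for $|\zeta|$ large (then takes $A$ large and the disc of radius $|t+iA|/2$), whereas you factor $P$ over its roots and argue by cases --- your route yields the sharper bound $|P|\gtrsim(1+|t|)^m$, but the paper's argument is shorter and already suffices.
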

\begin{proof} Note that $|P(x,t)| \geq 1/C$ for all $ |x| \geq R$ and  $t \in \R$. We first show $(i)$. Set $r = \log( 1+ (2C)^{-1})$. Fix $x \in \R^d$ with $|x| \geq R$ and $t \in \R$ arbitrary. For all $\zeta \in \C$ with $|\zeta|  \leq r (|x|^{a_0} + |t|^{\mu_0})$ it holds that
	$$|P(x, t + \zeta) - P(x,t)| \leq  \sum_{l = 1}^m |D^l_t P(x,t)| \frac{|\zeta|^l}{l!} \leq \frac{1}{2} |P(x,t)| .$$
	Hence, $|P(x, t + \zeta)| \geq 1/(2C)$. 
	Cauchy's inequalities give that for all $l,p \in \N_0$ 
	\begin{align*}
		\left|D^p_{t} \left( \frac{t^l}{P(x,t)} \right)\right| &\leq \frac{p!}{r^p (|x|^{a_0} + |t|^{\mu_0})^p} \max_{|\zeta| \leq r (|x|^{a_0} + |t|^{\mu_0}) } \frac{|t+\zeta|^l }{|P(x,t+\zeta)|} \\
		&\leq \frac{2Cp!(|t| + r(|x|^{a_0}+ |t|^{\mu_0}))^l}{r^p (|x|^{a_0} + |t|^{\mu_0})^p},
	\end{align*}
	from which $(i)$ follows.
	
	Next, we prove $(ii)$. Set $S = \max_{|x| \leq R} \sum_{k = 0}^{m-1} |Q_k(x)|$. For all $x \in \R^d$ with $|x| \leq R$ and  $\zeta \in \C$ with $|\zeta|  \geq \max \{1, S+\frac{1}{2}\}$ it holds that
	$$
	|P(x,\zeta)| \geq |\zeta|^m - \sum_{k = 0}^{m-1} |Q_k(x)| |\zeta|^k \geq \frac{1}{2}.
	$$
	Set $A = 2\max \{1, S+\frac{1}{2}\}$. Fix $x \in \R^d$ with $|x| \leq R$ and $t \in \R$ arbitrary. Cauchy's inequalities give that for all $l,p \in \N_0$ 
	\begin{align*}
		\left|D^p_{t} \left( \frac{(t+iA)^l}{P(x,t+iA)} \right)\right| &\leq \frac{2^pp!}{|t + iA|^p} \max_{|\zeta| \leq |t + iA|/2 } \frac{|t+ iA + \zeta|^l }{|P(x,t+ iA + \zeta)|} \\
		&\leq \frac{2^{p+ l + 1}p!}{|t + iA|^{p-l}}, 
	\end{align*}
	from which $(ii)$ follows.
\end{proof}

\begin{proof}[Proof of Proposition \ref{theo:existence of good fundamental solutions}]
	Set $B = \{ x \in \R^d \, | \, |x| \leq R \}$. With $A$ from Lemma \ref{lemma:derivatives of reciprocal}$(ii)$, we define $E\in\mathscr{D}'(\R^{d+1})$ via
	\begin{align*}
		\langle E, \varphi \rangle =  &\frac{1}{(2\pi)^{d+1}}\int_{\R^d\backslash B}\int_{-\infty}^\infty\frac{\widehat{\varphi}(\xi,\eta)}{P(-\xi,-\eta)}d\eta d\xi\\
		&+\frac{1}{(2\pi)^{d+1}}\int_B \int_{-\infty}^\infty\frac{\widehat{\varphi}(\xi,\eta+iA)}{P(-\xi,-\eta-iA)}d\eta d\xi, 
		\qquad \varphi \in \mathscr{D}(\R^{d+1}),
	\end{align*}
	where $\widehat{\varphi}$ denotes the Fourier transform of $\varphi$. It is clear that $E$ is a fundamental solution of $P(D)$. Since $P$ is hypoelliptic, $E$ is smooth in $\R^{d+1}\backslash \{0\}$. We set 
	$$
	p_{\alpha,l} = \left\lceil  \frac{|\alpha| + d + 1}{{a_0}} + \frac{l+2}{\mu_0}\right \rceil,  \qquad \alpha \in \N_0^d, l \in \N_0.
	$$
	There are $C_3, L_3 > 0$ such that
	\begin{equation}
		p_{\alpha,l} !  \leq C_3 L_3^{|\alpha| + l} |\alpha|!^{1/{a_0}} l!^{1/{\mu_0}}, \qquad \alpha \in \N_0^d, l \in \N_0,
		\label{inequ:factorials}
	\end{equation}
	and $C_4, L_4 > 0$ such that
	\begin{equation}
		\int_{\R^d\backslash B}  |\xi|^{|\alpha|}  \int_{-\infty}^\infty  \frac{(|\xi|^{a_0} + \max\{1,|\eta|\})^l}{(|\xi|^{a_0}+ |\eta|^{\mu_0})^{p_{\alpha,l}}} d\eta d\xi \leq C_4L^l_4, \quad \alpha \in \N_0^d, l \in \N_0.
		\label{inequ:integral}
	\end{equation}
	Let $\psi \in \mathscr{D}(\R^d)$ and $\chi \in \mathscr{D}(\R)$ be arbitrary. Let $r_\chi > 0$ be such that $\supp \chi \subseteq [-r_\chi,r_\chi]$. For all $\alpha\in\N_0^d$ and $l \in \N_0$ it holds that
	\begin{align*}
		&(2\pi)^{d+1}|\langle D_x^\alpha D^l_t  E(x,t) t^{p_{\alpha,l}},\psi(x) \chi(t) \rangle| \\
		&=  \int_{\R^d\backslash B}  |\xi|^{|\alpha|} |\widehat{\psi}(\xi)| \int_{-\infty}^\infty  |\widehat{\chi}(\eta)|   \left | D_t^{p_{\alpha,l}} \left( \frac{\eta^l}{P(-\xi,-\eta)} \right) \right| d\eta d\xi  \\
		&+ \int_B |\xi|^{|\alpha|} |\widehat{\psi}(\xi)|  \int_{-\infty}^\infty  |\widehat{\chi}(\eta + iA)|  \left | D^{p_{\alpha,l}}_t  \left( \frac{(\eta+iA)^l}{P(-\xi,-\eta - iA)}\right) \right| d\eta  d\xi. 
	\end{align*}
	Lemma \ref{lemma:derivatives of reciprocal}$(i)$ and \eqref{inequ:integral} imply that
	\begin{align*}
		& \int_{\R^d\backslash B}  |\xi|^{|\alpha|} |\widehat{\psi}(\xi)| \int_{-\infty}^\infty  |\widehat{\chi}(\eta)|   \left | D_t^{p_{\alpha,l}} \left( \frac{\eta^l}{P(-\xi,-\eta)} \right) \right| d\eta d\xi \\
		&\leq C_1L_1^{p_{\alpha,l} + l} p_{\alpha,l}! \|\psi \|_{L^1} \| \chi\|_{L^1}  \int_{\R^d\backslash B}  |\xi|^{|\alpha|}  \int_{-\infty}^\infty  \frac{(|\xi|^{a_0} + \max\{1,|\eta|\})^l}{(|\xi|^{a_0}+ |\eta|^{\mu_0})^{p_{\alpha,l}}} d\eta d\xi\\ 
		&\leq C_1C_4L_1^{p_{\alpha,l}+l} L_4^l p_{\alpha,l}! \|\psi \|_{L^1} \| \chi\|_{L^1}.   
	\end{align*}
	Likewise, Lemma \ref{lemma:derivatives of reciprocal}$(ii)$ yields that
	\begin{align*}
		&\int_B |\xi|^{|\alpha|} |\widehat{\psi}(\xi)|  \int_{-\infty}^\infty  |\widehat{\chi}(\eta + iA)|  \left | D^{p_{\alpha,l}}_t  \left( \frac{(\eta+iA)^l}{P(-\xi,-\eta - iA)}\right) \right| d\eta  d\xi \\
		&\leq C_2 L_2^{p_{\alpha,l}+l} p_{\alpha,l}! e^{Ar_\chi } \|\psi \|_{L^1} \| \chi\|_{L^1} \int_B |\xi|^{|\alpha|}   \int_{-\infty}^\infty   \frac{1}{(1+|\eta|)^{p_{\alpha,l} - l}} d\eta  d\xi \\
		&\leq 2C_2|B| R^{|\alpha|}L_2^{p_{\alpha,l}+l} p_{\alpha,l}! e^{Ar_\chi} \|\psi \|_{L^1} \| \chi\|_{L^1}. 
	\end{align*}
	By \eqref{inequ:factorials}, there are $C,L > 0$ with
	$$
	|\langle D_x^\alpha D^l_t  E(x,t) t^{p_{\alpha,l}},\psi(x) \chi(t) \rangle| \leq CL^{|\alpha| + l} |\alpha|!^{1/{a_0}} l!^{1/{\mu_0}} e^{Ar_\chi} \|\psi \|_{L^1} \| \chi\|_{L^1}.
	$$
	Fix $x \in \R^d$ and $t \in \R \backslash \{0\}$ arbitrary. Choose $\psi \in \mathscr{D}(\R^d)$ non-negative with $\int_{\R^d} \psi(y) dy = 1$ and  $\chi \in \mathscr{D}(\R)$ non-negative with $\int_{-\infty}^\infty \chi(s) ds = 1$ and $\supp \chi \subseteq [-1,1]$. For $\varepsilon > 0$ we set
	$$
	\psi_{x,\varepsilon}(y) = \frac{1}{\varepsilon^d} \psi \left( \frac{x - y}{\varepsilon} \right), \qquad \chi_{t,\varepsilon}(s) = \frac{1}{\varepsilon} \chi \left( \frac{t - s}{\varepsilon} \right).
	$$
	Then, $\|\psi_{x,\varepsilon}\|_{L^1} = \|\psi \|_{L^1} = 1$, $\|\chi_{t,\varepsilon}\|_{L^1} = \|\chi \|_{L^1} = 1$ and $\supp \chi_{t,\varepsilon} \subseteq [-|t| - \varepsilon,|t| + \varepsilon]$.  We obtain that for all $\alpha \in \N_0^d$ and $l \in \N_0$
	\begin{align*}
		|D_x^\alpha D^l_t  E(x,t) t^{p_{\alpha,l}}| &= \lim_{\varepsilon \to 0^+}|\langle D_x^\alpha D^l_t E(y,s) s^{p_{\alpha,l}},\psi_{x,\varepsilon}(y) \chi_{t,\varepsilon}(s) \rangle| \\
		& \leq CL^{|\alpha| + l} |\alpha|!^{1/{a_0}} l!^{1/{\mu_0}} e^{A|t|},
	\end{align*}
	which implies \eqref{improved-regularity}.
\end{proof}

\section{Characterization of ultradifferentiable functions via (almost) zero solutions of $P(D)$} \label{sect:almost}

In this section we characterize  tuples $(\varphi_0, \ldots, \varphi_{m-1})$ of compactly supported ultradifferentiable functions  via functions $\Phi \in \mathscr{D}(\R^{d+1})$ that are (almost) zero solutions of $P(D)$ and satisfy $D_t^j\Phi( \,\cdot \,,0)=\varphi_j$ for  $j=0,\ldots, m-1$. 
We start with the following result, which is essential for this article.

\begin{proposition}\label{almost-zero}
	Let $M$ be a weight sequence satisfying  $(M.4)_{b_0}$ and let $K \Subset \R^d$.
	There is $L > 0$ such that for  all $h > 0$  the following property holds: For all $\varphi_0, \ldots, \varphi_{m-1} \in \mathscr{D}^{M,h}_K$ there exists $\Phi  = \Phi(\varphi_0, \ldots, \varphi_{m-1}) \in \mathscr{D}(\R^{d+1})$ with $\supp \Phi \subseteq K \times \R$ such that
	\begin{itemize}
		\item[$(i)$] $D^j_t \Phi( \, \cdot \,, 0) = \varphi_j$ for $j = 0, \ldots, m-1$.
		\item[$(ii)$] $\displaystyle \sup_{(x,t) \in \R^{d+1}\backslash \R^d}  |P(D)\Phi(x,t)| e^{\omega_{M^{b_0,*}}\left(\frac{1}{Lh^{b_0}|t|}\right)} < \infty$.
	\end{itemize}
\end{proposition}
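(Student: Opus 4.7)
The plan is to construct $\Phi$ as a Dynkin--Petzsche type modified Taylor series in $t$: one starts from the formal power series solution of the Cauchy problem $P(D)\Phi = 0$, $D_t^j\Phi(\,\cdot\,,0) = \varphi_j$ ($j < m$), and damps the individual terms by $p$-dependent cutoffs in $t$ so as to obtain a genuine element of $\mathscr{D}(\R^{d+1})$ whose failure to solve $P(D)\Phi = 0$ is localized in thin annuli of $t$ and controlled by $\omega_{M^{b_0,*}}$.

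First I would set up the formal solution. Writing $\Phi = \sum_{p \geq 0} c_p(x) t^p$ and using $Q_m = 1$, the equation $P(D)\Phi = 0$ forces the recursion
\[
c_{l+m} = -\sum_{k=0}^{m-1}(-i)^{k-m}\frac{(l+k)!}{(l+m)!}\, Q_k(D_x) c_{l+k}, \qquad l \in \N_0,
\]
with initial data $c_j = i^j \varphi_j / j!$ for $0 \leq j < m$; every $c_p$ is supported in $K$. Passing to $\widetilde{c}_p := p!\,c_p$ clears the factorial ratios, so the recursion becomes $\widetilde{c}_{l+m} = -\sum_{k<m}(-i)^{k-m} Q_k(D_x)\widetilde{c}_{l+k}$. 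The key technical step is then a uniform bound of the form
\[
\sup_{x \in K} |D_x^\alpha c_p(x)| \leq C(L_0 h)^{|\alpha|}(L_0 h^{b_0})^p M_{|\alpha|}\, M_p^{b_0,*}, \qquad \alpha \in \N_0^d,\ p \in \N_0,
\]
with $L_0 \geq 1$ \emph{universal} (in particular independent of $h$). This is proved by induction: $\deg Q_k \leq b_0(m-k)$ (Definition \ref{def:b_0}) means that one recursion step costs at most $b_0 m$ additional $x$-derivatives, so after $p/m$ steps one has accumulated $\lesssim b_0 p$ derivatives; combining $(M.1)$, $(M.2)$ with Lemma \ref{lemma:sequences} to pass from $M_{\lfloor b_0 p\rfloor}$ to $M_p^{b_0}$, and using $(M.4)_{b_0}$ via Lemma \ref{lemma-M4}(i) to assume $M^{b_0,*}$ is log-convex, the bookkeeping of $M_p^{b_0,*}$ closes with a constant independent of $p$ and of $h$.

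With the estimate in hand, fix $\chi \in \mathscr{D}(\R)$ with $\chi \equiv 1$ on $[-1,1]$ and $\supp \chi \subseteq [-2,2]$, set $r_p := 1/(2L_0 h^{b_0} m_p^{b_0,*})$ for $p \geq 1$ (and $r_0 := 1$), and define
\[
\Phi(x,t) := \sum_{p=0}^\infty c_p(x)\, t^p\, \chi(t/r_p).
\]
Since $(m_p^{b_0,*})_p$ is non-decreasing, $r_p \to 0$, so for each $t \neq 0$ the series is locally finite; together with the estimate on $c_p$ this gives $\Phi \in \mathscr{D}(\R^{d+1})$ with $\supp \Phi \subseteq K \times \R$. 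Because $\chi \equiv 1$ near $0$, all $\chi^{(r)}(0)$ with $r \geq 1$ vanish, so Leibniz at $t=0$ isolates the $p=j$ term and yields $D_t^j\Phi(\,\cdot\,,0) = (-i)^j j!\, c_j = \varphi_j$ for $0 \leq j < m$, which is (i). For (ii), applying $P(D)$ term by term and expanding $D_t^k(t^p \chi(t/r_p))$ by Leibniz, the contributions where no $t$-derivative falls on $\chi$ sum to zero by the very recursion that defines the $c_p$, leaving only terms proportional to $\chi^{(k-j)}(t/r_p)$ with $0 \leq j < k \leq m$. For fixed $t\neq 0$ these are nonzero only for the finitely many $p$ with $r_p \leq |t| \leq 2r_p$, i.e.\ with $m_p^{b_0,*} \asymp 1/(L_0 h^{b_0}|t|)$; inserting the estimate from the previous paragraph together with the definition of $r_p$ bounds every summand by $C (Lh^{b_0}|t|)^p M_p^{b_0,*}$, and taking the infimum over the relevant $p$ yields, by definition of $\omega_{M^{b_0,*}}$,
\[
|P(D)\Phi(x,t)| \leq C \exp\!\bigl(-\omega_{M^{b_0,*}}(1/(Lh^{b_0}|t|))\bigr).
\]

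The main obstacle is the uniform estimate on the $D^\alpha c_p$: one must track both the cumulative $x$-derivatives and the factorials $p!/(p-j)!$ produced by the recursion, and show that after invoking $\deg Q_k \leq b_0(m-k)$, Lemma \ref{lemma:sequences} and $(M.4)_{b_0}$ (via Lemma \ref{lemma-M4}(i)), the resulting weight collapses to exactly $M_p^{b_0,*}$ with a constant independent of $p$ and of $h$. Without the almost-monotonicity of $m_p^{b_0,*}$ the cutoff scale $r_p$ would not be well-defined and the annulus-by-annulus argument in the last step would break down. Once this estimate is secured, the remainder of the proof is a fairly mechanical adaptation of Petzsche's modified Taylor series construction of almost analytic extensions \cite{Petzsche1984,P-V} to the hypoelliptic operator $P(D)$.
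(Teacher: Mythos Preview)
Your approach is essentially the one the paper uses: a Petzsche-type modified Taylor series in $t$ with $p$-dependent cutoffs. The only organisational difference is that the paper does not recurse directly on the coefficients of $\Phi$; instead it introduces auxiliary operators $\mathscr{C}_l$ and a damped series $S(\varphi)$ solving a one-parameter Cauchy problem, and then assembles $\Phi$ as $\sum_{j,k} Q_{j+k+1}(D_x) D_t^k S(\varphi_j)$. Your direct recursion on the $c_p$ is equivalent and arguably cleaner; the core estimate you need on $\|D_x^\alpha c_p\|_\infty$ corresponds exactly to Lemma \ref{lemma:taylor-like}, which the paper proves via an explicit combinatorial formula for $\mathscr{C}_{m-1+l}$ rather than by induction on the recursion. (Note incidentally that only finitely many $\alpha$ enter in bounding $P(D)\Phi$, so the full uniformity in $\alpha$ you state is not needed; the paper's Lemma \ref{lemma:taylor-like} allows the constant to depend on $\alpha$.)

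There is, however, one genuine gap. You claim that $r_p \to 0$ because $(m_p^{b_0,*})_p$ is non-decreasing, and conclude that for each $t\neq 0$ the series is locally finite. But $(M.4)_{b_0}$ permits $M \asymp p!^{1/b_0}$ (Lemma \ref{lemma-M4}$(ii)$), in which case $(m_p^{b_0,*})_p$ is bounded and $r_p$ tends to a \emph{positive} limit $r_\infty$; for $0<|t|<r_\infty$ your series is then a genuinely infinite sum and your smoothness argument fails as written. The paper treats this case separately: the coefficient bound $|c_p|\lesssim (L_0 h^{b_0})^p M_p^{b_0,*}\lesssim (L_0 h^{b_0})^p$ shows that the undamped formal power series itself converges in $C^\infty$ on a strip $|t|<c/h^{b_0}$, so $\Phi$ is smooth and $P(D)\Phi=0$ there exactly, which is precisely the meaning of condition $(ii)$ under the convention of Remark \ref{associated-1}. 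Your construction survives in this regime, but both the smoothness of $\Phi$ at $t=0$ and the verification of $(ii)$ require this convergence argument in place of local finiteness.
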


\begin{remark}
	By Lemma \ref{lemma-M4}$(ii)$, $(M.4)_{b_0}$ implies that either $p!^{1/b_0} \prec M$ or  $p!^{1/b_0} \asymp M$. If $p!^{1/b_0} \prec M$, the meaning of condition $(ii)$ in Proposition \ref{almost-zero} is clear. If $p!^{1/b_0} \asymp M$, this condition means that $\Phi$ satisfies $P(D) \Phi = 0$ on $\R^d \times (-1/(Lh^{b_0}), 1/(Lh^{b_0}))$ (cf.\ Remark \ref{associated-1}). The same convention will be tacitly used in the rest of this article.
\end{remark}

The proof of Proposition \ref{almost-zero} requires some preparation. In \cite{Petzsche1984} Petzsche constructed almost analytic extensions of ultradifferentiable functions by means of modified Taylor series. If $p!^{1/b_0} \prec M$, we use here a similar idea to prove Proposition \ref{almost-zero}, starting from a power series Ansatz $\Phi$ that formally solves the Cauchy problem $P(D)\Phi = 0$ on $\R^{d+1} \backslash \R^d$ and $D_t^j\Phi( \,\cdot \,,0)=\varphi_j$ on $\R^d$ for $j=0,\ldots, m-1$ \cite[Section 4]{Kalmes18-2} (see also the proof of \cite[Satz 4.1]{Langenbruch1979}). If $p!^{1/b_0} \asymp M$, we even show that $\Phi$  converges. We now recall the definition and some basic properties of this formal power series solution; see  \cite[Section 4]{Kalmes18-2} for  details. For $l \in \N_0$ we recursively define the mapping $\mathscr{C}_l:C^\infty(\R^d)\rightarrow C^\infty(\R^d)$ as
\[
\mathscr{C}_l(\varphi) = \begin{cases}
	0,& l= 0,\ldots,m-2, \\
	\varphi,& l = m-1,\\
	-\sum_{k=0}^{m-1}Q_k(D_x)\mathscr{C}_{k+l-m}(\varphi),& l\geq m.
\end{cases}\]
Since $Q_m=1$, we have that for all $l \in \N_0$
\begin{equation}
	\sum_{k=0}^m Q_k(D_x)\mathscr{C}_{k+l}(\varphi)=0, \qquad  \varphi \in C^\infty(\R^d).
	\label{definition}
\end{equation}
Moreover, as $\mathscr{C}_l$  is a constant coefficient partial differential operator,  $\supp\mathscr{C}_l(\varphi)\subseteq\supp \varphi$ and $D_x^\alpha\mathscr{C}_l(\varphi)=\mathscr{C}_l(D_x^\alpha\varphi)$ for all $\varphi \in C^\infty(\R^d)$ and $\alpha\in\N_0^d$. If $\varphi_0,\ldots,\varphi_{m-1}\in C^\infty(\R^d)$ are such that 
$$
S(\varphi_j)(x,t) = \sum_{l=0}^\infty\mathscr{C}_l(\varphi_j)(x)\frac{(it)^l}{l!}, \qquad (x,t) \in \R^{d+1},
$$ 
converges in $C^\infty(\R^{d+1})$ for all $j = 0, \ldots, m-1$, then
\[\Phi=\sum_{j=0}^{m-1}\sum_{k=0}^{m-1-j}Q_{j+k+1}(D_x)D_t^k S(\varphi_j)\]
solves the Cauchy problem $P(D)\Phi = 0$ on $\R^{d+1} \backslash \R^d$ and $D_t^j\Phi( \,\cdot \,,0)=\varphi_j$ on $\R^d$ for $j=0,\ldots, m-1$ \cite[Proposition 4.4]{Kalmes18-2}.

We need the following  lemma.
\begin{lemma}\label{lemma:taylor-like}
	Let $M$ be a weight sequence and let $K  \Subset \R^d$. There is $L_1 > 0$ such that for all $h > 0$ the following property holds: For all $\alpha \in \N^d$ there is $C > 0$ such that for all $p \in \N_0$ 
	$$
	\| \mathscr{C}_{p}(D^\alpha_x\varphi) \|_0 \leq C(L_1h^{b_0})^{p}  M^{b_0}_p\|\varphi\|_{\mathscr{E}^{M,h}(K)},  \qquad \varphi \in \mathscr{D}^{M,h}_K.
	$$
\end{lemma}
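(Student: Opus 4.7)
The plan is to realize $\mathscr{C}_p$ as a constant-coefficient partial differential operator $\sum_\beta c_{p,\beta}D^\beta_x$ of controlled order, bound its coefficients via the defining recursion, and then combine these structural estimates with the seminorm bound on $\varphi$.

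First I would show by induction on $p$ that $\mathscr{C}_p$ is a constant-coefficient partial differential operator in $D_x$ with $\deg\mathscr{C}_p\leq(p-m+1)^+ b_0$. The base cases $p\leq m-1$ are immediate from the definition. For $p\geq m$ the recursion combined with the key inequality $\deg Q_k\leq (m-k)b_0$ (which is precisely Definition \ref{def:b_0}) yields
\begin{equation*}
\deg\mathscr{C}_p\leq\max_{0\leq k\leq m-1}\bigl((m-k)b_0+\deg\mathscr{C}_{k+p-m}\bigr)\leq (p-m+1)b_0.
\end{equation*}
Writing $\mathscr{C}_p=\sum_{|\beta|\leq(p-m+1)b_0}c_{p,\beta}D^\beta_x$, the recursion becomes $c_{p,\beta}=-\sum_{k,\gamma}q_{k,\gamma}c_{k+p-m,\beta-\gamma}$, where $Q_k=\sum_\gamma q_{k,\gamma}\xi^\gamma$. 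Passing to $b_p:=\sup_\beta|c_{p,\beta}|$ gives the scalar linear recurrence $b_p\leq\sum_{k=0}^{m-1}\|Q_k\|_{\ell^1}b_{k+p-m}$, whence $b_p\leq A\rho^p$ for suitable constants $A,\rho>0$ depending only on $P$. Since there are only polynomially (in $p$) many nonzero coefficients $c_{p,\beta}$, absorbing the polynomial factor into an exponential gives $\sum_\beta|c_{p,\beta}|\leq A'L^p$ for universal $A',L>0$.

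For the main estimate, take $\varphi\in\mathscr{D}^{M,h}_K$ and $\alpha\in\mathbb{N}_0^d$. Since $\mathscr{C}_p$ commutes with $D_x$,
\begin{equation*}
\|\mathscr{C}_p(D^\alpha_x\varphi)\|_0\leq\sum_\beta|c_{p,\beta}|\,h^{|\alpha+\beta|}M_{|\alpha+\beta|}\|\varphi\|_{\mathscr{E}^{M,h}(K)}.
\end{equation*}
Applying $(M.2)$ in the form $M_{|\alpha+\beta|}\leq C_M H^{|\alpha|+|\beta|}M_{|\alpha|}M_{|\beta|}$ separates the $\alpha$-dependence into a prefactor $C_M(hH)^{|\alpha|}M_{|\alpha|}$, which is absorbed into the $h$- and $\alpha$-dependent constant $C$. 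What remains is to estimate $\sum_\beta|c_{p,\beta}|(hH)^{|\beta|}M_{|\beta|}$. Combining the coefficient bound with the constraint $|\beta|\leq(p-m+1)b_0$ and with Lemma \ref{lemma:sequences} (which gives $M_{\lfloor pb_0\rfloor}\leq C_3 L_3^p M^{b_0}_p$) produces the bound $C''(L_1h^{b_0})^p M^{b_0}_p$ with $L_1:=LL_3H^{b_0}$ (times a constant absorbing remaining polynomial factors); crucially, $L_1$ is independent of $h$ and $\alpha$, matching the form demanded by the lemma.

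The main technical hurdle is that $j\mapsto(hH)^jM_j$ is in general only \emph{eventually} monotone, with the crossover point depending on $h$: for small $h$ the function first decreases before increasing. By $(M.3)'$ one has $m_j\to\infty$, so the increasing regime is eventually reached, and the estimate above is valid for $p$ above an $h$-dependent threshold. For the finitely many values of $p$ below that threshold a crude bound suffices, and the resulting error is absorbed into the $h$- and $\alpha$-dependent constant $C$ in the lemma's conclusion.
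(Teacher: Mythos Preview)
Your proof is correct and takes a genuinely different route from the paper's. The paper invokes the explicit closed-form representation
\[
\mathscr{C}_{m-1+l}(\varphi)=\sum_{\beta \in\N_0^m,\, \sigma(\beta)=l}(-1)^{|\beta|}\binom{|\beta|}{\beta_1,\ldots,\beta_m}\prod_{k=1}^m Q_{m-k}^{\beta_k}(D_x)\varphi
\]
(cited from \cite{Kalmes18-2}) and estimates each summand separately, whereas you work directly with the recursion to control the order and the $\ell^1$-norm of the coefficients of $\mathscr{C}_p$; this makes your argument more self-contained. The more interesting divergence is how the non-monotonicity of $j\mapsto (hH)^j M_j$ for small $h$ is handled. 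You deal with it head-on by noting that the crossover index $j_0(h)$ is finite and absorbing the finitely many exceptional $p$ into the $h$-dependent constant $C$. The paper sidesteps the issue entirely by introducing the auxiliary seminorm $|\varphi|_l=\sup_x\max_{|\alpha|=l}|D^\alpha_x\varphi|$ (top-order derivatives only), observing that for $\varphi\in\mathscr{D}_K$ one has $\|\varphi\|_l\leq R^l|\varphi|_l$ with $R$ depending only on $K$, and then using $|\varphi|_{\lfloor b_0 l\rfloor}\leq h^{\lfloor b_0 l\rfloor}M_{\lfloor b_0 l\rfloor}\|\varphi\|_{\mathscr{E}^{M,h}(K)}$ directly; since only the exact order $\lfloor b_0 l\rfloor$ appears, no monotonicity in $j$ is needed. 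Your approach is more elementary, while the paper's trick with $|\cdot|_l$ gives a cleaner bookkeeping that avoids the case distinction.
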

\begin{proof} By $(M.2)$, it suffices to consider the case $\alpha = 0$. For $l \in \N_0$ we define the auxiliary norm
	$$
	|\varphi |_{l} = \sup_{x \in \R^d} \max_{|\alpha| = l} |D^\alpha_x \varphi(x)|,  \qquad \varphi \in \mathscr{D}(\R^d).
	$$
	There is $R \geq 1$ (only depending on $K$) such that for all $l \in \N_0$
	$$
	\| \varphi \|_{l} \leq R^l | \varphi |_{l}, \qquad \varphi \in \mathscr{D}_K.
	$$
	By Lemma \ref{lemma:sequences}, there are $C,L  \geq 1$ such that for all  $l \in \N_0$ and every $\varphi \in \mathscr{D}^{M,h}_K$ 
	$$ |\varphi |_{\lfloor b_0 l\rfloor} \leq h^{\lfloor b_0 l\rfloor} M_{{\lfloor b_0 l\rfloor}}\|\varphi\|_{\mathscr{E}^{M,h}(K)} 
	\leq \frac{C}{\min\{1,h\}} (Lh^{b_0})^l M^{b_0}_l \|\varphi\|_{\mathscr{E}^{M,h}(K)}.$$
	The recursively defined operators $\mathscr{C}_{m-1+l}$, $l  \in \N_0,$ have the following explicit representation \cite[Proposition 4.5]{Kalmes18-2}
	\[\mathscr{C}_{m-1+l}(\varphi)=\sum_{\beta \in\N_0^m, \sigma(\beta)=l}(-1)^{|\beta|}\binom{|\beta|}{\beta_1,\ldots,\beta_m}\prod_{k=1}^m Q_{m-k}^{\beta_k}(D_x)\varphi,\;  \varphi \in C^\infty(\R^d), \]
	where $\sigma(\beta)=\sum_{j=1}^m j\beta_j$. Choose $L_2 \geq 1$ such that for all $k = 0, \ldots, m-1$
	$$
	\| Q_k(D_x) \varphi \|_0 \leq L_2 \| \varphi\|_{\operatorname{deg} Q_k}, \qquad \varphi \in \mathscr{D}(\R^d).
	$$
	Let $h > 0$ and  $\varphi \in  \mathscr{D}^{M,h}_K$ be arbitrary. For all $\beta \in \N_0^m$ with $\sigma(\beta) = l$ it holds that
	\begin{align*}
		&\left \| \prod_{k=1}^m Q_{m-k}^{\beta_k}(D_x)\varphi \right \|_0 \leq L^{|\beta|}_2 \| \varphi \|_{\sum_{k=1}^m \beta_k\operatorname{deg}  Q_{m-k}} \leq  L_2^{l} \| \varphi \|_{\lfloor b_0 l\rfloor} \\
		&\leq  (L_2R^{b_0})^l | \varphi |_{\lfloor b_0 l\rfloor} \leq  \frac{C}{\min\{1,h\}} (L L_2 R^{b_0}h^{b_0})^l M^{b_0}_l \|\varphi\|_{\mathscr{E}^{M,h}(K)}.
	\end{align*}
	Hence,
	\begin{align*}
		&\| \mathscr{C}_{m-1+l}(\varphi)\|_0 \\
		&\leq  \frac{C}{\min\{1,h\}} (L L_2 R^{b_0}h^{b_0})^l M^{b_0}_l \|\varphi\|_{\mathscr{E}^{M,h}(K)} \sum_{\beta \in\N_0^m, |\beta| \leq l}\binom{|\beta|}{\beta_1,\ldots,\beta_m} \\
		& \leq \frac{C}{\min\{1,h\}}  ((m+1)L L_2 R^{b_0}h^{b_0})^l M^{b_0}_l \|\varphi\|_{\mathscr{E}^{M,h}(K)}. \\
		& \leq \frac{C}{\min\{1,h^{1+b_0(m-1)}\}}  ((m+1) LL_2 R^{b_0}h^{b_0})^{m-1+l} M^{b_0}_{m-1+l} \|\varphi\|_{\mathscr{E}^{M,h}(K)}. 
	\end{align*}
	Since $\mathscr{C}_l = 0$ for $l <m-1$, this shows the result.
\end{proof}

\begin{proof}[Proof of Proposition \ref{almost-zero}]  Lemma \ref{lemma-M4}$(ii)$ yields that  either $p!^{1/b_0} \prec M$ or  $p!^{1/b_0} \asymp M$. Suppose first that $p!^{1/b_0} \prec M$. By  Lemma \ref{lemma-M4}$(i)$, we may assume without loss of generality that $m^{b_0,*}_p \nearrow \infty$.  Throughout this proof $C$ will denote a  positive constant that is independent of $\varphi_0, \ldots, \varphi_{m-1}$  but may vary from place to place. Let $H$ be the constant occurring in $(M.2)$ for $M$. Then, $M^{b_0,*}$ satisfies $(M.2)$ with $H^{b_0}$ instead of $H$. Set $A = 8L_1H^{b_0}$, where $L_1$ denotes the constant from Lemma \ref{lemma:taylor-like}, and $L = H^{b_0}A=8L_1H^{2b_0}$. Choose  $\psi \in \mathscr{D}(\R)$ such that $\operatorname{supp} \psi \subseteq [-2,2]$ and $\psi = 1$ on $[-1,1]$. Let now $h>0$ be given, arbitrary but fixed. Define $\lambda_p=Ah^{b_0}m_{p+1}^{b_0,*}$ for $p\in\N_0$. Note that $\lambda_p \nearrow \infty$.  For $\varphi \in \mathscr{D}^{M,h}_K$ we  define
	\begin{equation}
		S(x,t)= S_\psi(\varphi)(x,t) = \sum_{p=0}^\infty  \mathscr{C}_p(\varphi)(x)  \frac{(it)^p}{p!}\psi(\lambda_p t), \qquad (x,t) \in \R^{d+1}.
		\label{formula-S}
	\end{equation}
	Since $\lambda_p \nearrow \infty$, the above series is finite on $\R^d \times\{ t \in \R \, | \, |t| \geq \varepsilon \}$ for each $\varepsilon > 0$. Hence, $S \in C^\infty(\R^d \times (\R \backslash \{0\}))$. We claim that for all $\alpha \in \N_0^d$ and $l \in \N_0$ 
	\begin{equation}
		\limsup_{t \to 0} \sup_{x \in \R^d} |P(D)(D^\alpha_xD^l_tS)(x,t)| e^{\omega_{M^{b_0,*}}\left(\frac{1}{Lh^{b_0}|t|}\right)} < \infty,
		\label{tech-0}
	\end{equation}
	\begin{equation}
		\lim_{t \to 0} D^\alpha_xD^l_t S(x,t) = D^\alpha_x \mathscr{C}_l(\varphi)(x)  \mbox{ uniformly for $x \in \R^d$}.
		\label{tech-1}
	\end{equation}
	Before we prove these claims, let us show how they entail the result. Property \eqref{tech-1} implies that  $S \in \mathscr{D}(\R^{d+1})$ with $\supp S \subseteq K \times \R$ such that 
	\begin{equation}
		D^l_t S(\, \cdot \,,0) = \mathscr{C}_l(\varphi), \qquad l \in \N_0.
		\label{boundary-S}
	\end{equation}
	For $\varphi_0, \ldots, \varphi_{m-1} \in \mathscr{D}^{M,h}_K$ we  define
	$$
	\Phi = \Phi(\varphi_0, \ldots, \varphi_{m-1}) = \sum_{j=0}^{m-1}\sum_{k=0}^{m-1-j}Q_{j+k+1}(D_x)D_t^k S_\psi(\varphi_j).
	$$
	Then, $\Phi   \in \mathscr{D}(\R^{d+1})$ with $\supp \Phi \subseteq K \times \R$. Moreover, \eqref{tech-0} implies that $\Phi$ satisfies $(ii)$. Next, we show $(i)$. In \cite[Proposition 4.2]{Kalmes18-2} it is shown that
	$$
	\sum_{j=0}^n \sum_{k = m-1-n}^{m-1-j} Q_{j+k+1}(D_x) \mathscr{C}_{k+n}(\varphi_j) = \varphi_n, \qquad n= 0, \ldots, m-1.
	$$
	Combining this with \eqref{boundary-S} and the fact that $\mathscr{C}_l = 0$ for $l <m-1$, we obtain that for all $n= 0, \ldots, m-1$
	\begin{align*}
		D^n_t \Phi( \, \cdot \,, 0) &=  \sum_{j=0}^{m-1}\sum_{k=0}^{m-1-j}Q_{j+k+1}(D_x)D_t^{k+n} S_\psi(\varphi_j)(\, \cdot \,,0) \\
		&= \sum_{j=0}^{m-1}\sum_{k=0}^{m-1-j}Q_{j+k+1}(D_x) \mathscr{C}_{k+n}(\varphi_j) \\
		&= \sum_{j=0}^{n}\sum_{k=m-1-n}^{m-1-j}Q_{j+k+1}(D_x) \mathscr{C}_{k+n}(\varphi_j) = \varphi_n.
	\end{align*}
	We now show \eqref{tech-0} and \eqref{tech-1}. Let $\alpha \in \N_0^d$ and $l \in \N_0$ be arbitrary. We have that $D^\alpha_xD^l_t S = S_1 + S_2$, where
	$$
	S_1(x,t) =  \sum_{n =0}^{l-1} \binom{l}{n} \sum_{p=0}^\infty \mathscr{C}_{p+n}(D^\alpha_x\varphi)(x) \frac{(it)^p}{p!} 
	\lambda^{l-n}_{p+n} D^{l-n}_t \psi(\lambda_{p+n}t)
	$$
	and
	$$
	S_2(x,t) = \sum_{p=0}^\infty \mathscr{C}_{p+l}(D^\alpha_x\varphi)(x) \frac{(it)^p}{p!} \psi(\lambda_{p+l}t).
	$$
	We first prove \eqref{tech-0}. Note that
	\begin{align*}
		P(D)S_1(x,t) &=    \sum_{n =0}^{l-1} \binom{l}{n} \sum_{k = 0}^m \sum_{j= 0}^k \binom{k}{j}  \sum_{p=0}^\infty  \mathscr{C}_{p+j+n}\left(Q_k(D_x)D^\alpha_x\varphi\right)(x) \frac{(it)^p}{p!}  \times \\ 
		& \phantom{=} \lambda^{l-n+k-j}_{p+j+n} D^{l-n+k-j}_t \psi(\lambda_{p+j+n}t)
	\end{align*}
	and $P(D)S_2 = T_1 + T_2$, where
	$$
	T_1(x,t) = \sum_{k = 0}^m \sum_{j= 0}^{k-1} \binom{k}{j}  \sum_{p=0}^\infty  \mathscr{C}_{p+j+l}(Q_k(D_x)D^\alpha_x \varphi)(x) \frac{(it)^p}{p!} \lambda^{k-j}_{p+j+l} D^{k-j}_t \psi(\lambda_{p+j+l}t)
	$$
	and
	$$
	T_2(x,t) = \sum_{p=0}^\infty \sum_{k = 0}^m \mathscr{C}_{p+k + l}(Q_k(D_x)D^\alpha_x\varphi)(x) \frac{(it)^p}{p!} \psi(\lambda_{p+k+l}t).
	$$
	We now show suitable estimates for the above expressions. To this end, following Dyn'kin \cite{Dynkin} (see also \cite{Petzsche1984,R-S18}), we introduce the auxiliary function
	$$
	\Gamma(\varepsilon) = \min \left\{ p \in \N_0 \, | \, m^{b_0,*}_{p+1}\geq \frac{1}{\varepsilon} \right\}, \qquad  \varepsilon > 0. 
	$$
	Since $\lim_{p \to \infty} m^{b_0,*}_p = \infty$, it holds that $\Gamma(\varepsilon) < \infty$  for each $\varepsilon > 0$. Fix $\varepsilon > 0$. By definition of $\Gamma$, we have that  $\varepsilon m_p^{b_0,*} < 1$ for all $p \in \N$ with $p \leq \Gamma(\varepsilon)$. Since $(m^{b_0,*}_p)_{p \in \N}$ is non-decreasing, we also have that  $\varepsilon m_p^{b_0,*} \geq 1$ for all $p \in \N$ with $p > \Gamma(\varepsilon)$. As $M^{b_0,*}_0 = 1$, we find that the sequence $p \mapsto \varepsilon^pM^{b_0,*}_p$ is decreasing for $0 \leq p \leq \Gamma(\varepsilon)$ and non-decreasing for $p \geq \Gamma(\varepsilon)$. Consequently,  $\varepsilon^{\Gamma(\varepsilon)} M^{b_0,*}_{\Gamma(\varepsilon)} = e^{-\omega_{M^{b_0,*}}\left (\frac{1}{\varepsilon} \right)}$. We remark that these properties of $\Gamma$, which will be frequently used in the rest of the proof, depend crucially on the assumptions $p!^{1/b_0} \prec M$ and $(M.4)_{b_0}$.  Note that, for all $t \in \R \backslash \{0\}$ and  $p \in \N$ it holds that $\psi(\lambda_pt) = 1$ if $p < \Gamma(A h^{b_0}|t|)$ and  $\psi(\lambda_pt) = 0$ if $p \geq \Gamma(A h^{b_0}|t|/2)$. For all $x \in \R^d$ and $t \in \R \backslash \{ 0\}$ with $|t|$ small enough it thus follows
	\begin{align*}
		P(D)S_1(x,t) &=   \sum_{n =0}^{l-1} \binom{l}{n} \sum_{k = 0}^m \sum_{j= 0}^k \binom{k}{j}  \sum_{ p = \Gamma(A h^{b_0}|t|) -m -l}^{\Gamma(A h^{b_0}|t|/2)-1}  \mathscr{C}_{p+j+n}(Q_k(D_x)D^\alpha_x\varphi)(x)   \times \\ 
		& \phantom{=} \frac{(it)^p}{p!}\lambda^{l-n+k-j}_{p+j+n} D^{l-n+k-j}_t \psi(\lambda_{p+j+n}t),
	\end{align*}
	\begin{align*}
		T_1(x,t) &= \sum_{k = 0}^m \sum_{j= 0}^{k-1} \binom{k}{j}  \sum_{ p = \Gamma(A h^{b_0}|t|) -m -l}^{\Gamma(A h^{b_0}|t|/2)-1}  \mathscr{C}_{p+j+l}( Q_k(D_x)D^\alpha_x\varphi)(x) \frac{(it)^p}{p!} \times \\
		& \phantom{=} \lambda^{k-j}_{p+j+l} D^{k-j}_t \psi(\lambda_{p+j+l}t),
	\end{align*}
	and, by \eqref{definition},
	$$
	T_2(x,t) =  \sum_{k = 0}^m \sum_{p=  \Gamma(A h^{b_0}|t|) -m - l}^{\Gamma(A h^{b_0}|t|/2)-1} \mathscr{C}_{p+k + l}(Q_k(D_x)D^\alpha_x \varphi)(x) \frac{(it)^p}{p!} \psi(\lambda_{p+k+l}t). 
	$$
	In order to estimate the inner sums of  $P(D)S_1$, $T_1$ and $T_2$, let $n \leq l$,  $k \leq m$ and $j \leq k$ be arbitrary. Note that	\begin{equation}
		(p+q)! \leq 2^{p+q}p!q!, \qquad  p,q \in \N_0.
		\label{factorial}
	\end{equation}
	For all $x \in \R^d$ and $t \in \R \backslash \{ 0\}$ with $|t|$ small enough it holds that
	\begin{align*}
		&\left |\sum_{ p = \Gamma(A h^{b_0}|t|) -m -l}^{\Gamma(A h^{b_0}|t|/2)-1}  \mathscr{C}_{p+j+n}(Q_k(D_x)D^\alpha_x \varphi)(x) \frac{(it)^p}{p!}  \lambda^{l-n+k-j}_{p+j+n} D^{l-n+k-j}_t \psi(\lambda_{p+j+n}t) \right | \\
		&\leq C\| \varphi\|_{\mathscr{E}^{M,h}(K)} \sum_{ p = \Gamma(A h^{b_0}|t|) -m -l}^{\Gamma(A h^{b_0}|t|/2)-1}  (2L_1h^{b_0})^p M^{b_0}_{p+j+n} \lambda^{l-n+k-j}_{p+j+n}  \frac{|t|^p}{(p+j+n)!}  \\
		&  \mbox{{(Lemma \ref{lemma:taylor-like} $\&$ inequality \eqref{factorial})}} \\
		&\leq C\| \varphi\|_{\mathscr{E}^{M,h}(K)} \sum_{ p = \Gamma(A h^{b_0}|t|) -m -l}^{\Gamma(A h^{b_0}|t|/2)-1}  (2L_1h^{b_0} |t|)^p M^{b_0,*}_{p+l+k} 
		\\ & \mbox{{($(m^{b_0,*}_p)_{p \in \N}$ is non-decreasing)}}  \\
		&\leq C\| \varphi\|_{\mathscr{E}^{M,h}(K)} \sum_{ p = \Gamma(A h^{b_0}|t|) -m -l}^{\Gamma(A h^{b_0}|t|/2)-1} \frac{1}{2^p} (A h^{b_0} |t|/2)^p M^{b_0,*}_{p} \\&  \mbox{{($(M.2)$ for $M^{b_0,*}$)}}  \\
		&\leq C\| \varphi\|_{\mathscr{E}^{M,h}(K)}  (A h^{b_0} |t|/2)^{\Gamma(A h^{b_0}|t|) -m -l} M^{b_0,*}_{\Gamma(A h^{b_0}|t|) -m -l}   \\&\mbox{{(properties of $\Gamma$)}} \\
		&\leq C\| \varphi\|_{\mathscr{E}^{M,h}(K)} e^{-\omega_{M^{b_0,*}}\left (\frac{1}{A h^{b_0}|t|} \right) + (m+l)\log\left(\frac{1}{A h^{b_0}|t|} \right)} \\&  \mbox{{($M^{b_0,*}$ is non-decreasing $\&$ $\varepsilon^{\Gamma(\varepsilon)}M_{\Gamma(\varepsilon)}^{b_0,*}=e^{-\omega_{M^{b_0,*}}(1/\varepsilon)})$ }} \\
		&\leq C\| \varphi\|_{\mathscr{E}^{M,h}(K)} e^{-\frac{1}{2}\omega_{M^{b_0,*}}\left (\frac{1}{A h^{b_0}|t|} \right)}   \\&\mbox{{($\log(\rho)=o(\omega_{M^{b_0,*}}(\rho))$)}} \\
		&\leq C\| \varphi\|_{\mathscr{E}^{M,h}(K)} e^{-\omega_{M^{b_0,*}}\left (\frac{1}{L h^{b_0}|t|} \right)}  \\&  \mbox{{($(M.2)$ for $M^{b_0,*}$ in terms of its associated weight function)}},
	\end{align*}
	{where we recall that $L = H^{b_0}A$.} This implies \eqref{tech-0}. Finally, we show \eqref{tech-1}. For all $x \in \R^d$ and $t \in \R \backslash \{ 0\}$ with $|t|$ small enough it holds that 
	$$
	S_1(x,t) =  \sum_{n =0}^{l-1} \binom{l}{n} \sum_{p=1}^{\Gamma(A h^{b_0}|t|/2)-1}  \mathscr{C}_{p+n}(D^\alpha_x\varphi)(x) \frac{(it)^p}{p!} 
	\lambda^{l-n}_{p+n} D^{l-n}_t \psi(\lambda_{p+n}t)
	$$
	and
	$$
	S_2(x,t) - D^\alpha_x \mathscr{C}_l(\varphi)(x)  = \sum_{p=1}^{\Gamma(A h^{b_0}|t|/2)-1} \mathscr{C}_{p+l}(D^\alpha_x\varphi)(x) \frac{(it)^p}{p!} \psi(\lambda_{p+l}t).
	$$
	Let $n \leq l$ be arbitrary. {By using similar arguments as above, we find that} for all $x \in \R^d$ and $t \in \R \backslash \{ 0\}$ with $|t|$ small enough 
	\begin{align*}
		&\left | \sum_{p=1}^{\Gamma(A h^{b_0}|t|/2)-1}  \mathscr{C}_{p+n}(D^\alpha_x\varphi)(x) \frac{(it)^p}{p!} 
		\lambda^{l-n}_{p+n} D^{l-n}_t \psi(\lambda_{p+n}t)  \right | \\
		\leq&  C \| \varphi\|_{\mathscr{E}^{M,h}(K)} \sum_{p=1}^{\Gamma(A h^{b_0}|t|/2)-1} (2L_1h^{b_0})^p  M^{b_0}_{p+n} \lambda^{l-n}_{p+n} \frac{|t|^p}{(p+n)!} \\
		\leq&  C\| \varphi\|_{\mathscr{E}^{M,h}(K)} \sum_{p=1}^{\Gamma(A h^{b_0}|t|/2)-1}  (2L_1h^{b_0})^p  M^{b_0,*}_{p+l} |t|^p \\
		\leq& C\| \varphi\|_{\mathscr{E}^{M,h}(K)} \sum_{p=1}^{\Gamma(A h^{b_0}|t|/2)-1}  \frac{1}{2^p} (A h^{b_0} |t|/2)^p M^{b_0,*}_{p}  \\
		\leq& |t| C\| \varphi\|_{\mathscr{E}^{M,h}(K)}, 
	\end{align*}
	which implies \eqref{tech-1} and finishes the proof of the claim.
	
	Next, we suppose that $p!^{1/b_0} \asymp M$. For $\varphi \in \mathscr{D}^{M,h}_K$ we  define
	$$
	S(x,t) = \sum_{p=0}^\infty  \mathscr{C}_p(\varphi)(x)  \frac{(it)^p}{p!}, \qquad (x,t) \in \R^{d+1}.
	$$
	Lemma \ref{lemma:taylor-like} implies that there is $L > 0$ (independent of $h > 0$) such that the series $S(x,t)$ converges in $C^\infty(\R^d \times (-2/(Lh^{b_0}), 2/(Lh^{b_0})))$ and  that for all $\alpha \in \N_0^{d}$ and $l \in \N_0$ there is $C > 0$ such that 
	$$
	\sup_{\substack{ x \in \R^d \\ |t| \leq 2/(Lh^{b_0})  } } |D^\alpha_x D^l_t S(\varphi)(x,t)| \leq C \| \varphi\|_{\mathscr{E}^{M,h}(K)}, \qquad \varphi \in \mathscr{D}^{M,h}_K.
	$$
	Furthermore, $S$ satisfies \eqref{boundary-S} and $P(D)S = 0$ on $\R^d \times (-2/(Lh^{b_0}), 2/(Lh^{b_0}))$ by \eqref{definition}. Choose $\chi \in \mathscr{D}(\R)$ with $\supp\chi\subset (-2/(Lh^{b_0}), 2/(Lh^{b_0}))$ and such that $\chi = 1$ on $[-1/(Lh^{b_0}), 1/(Lh^{b_0})]$. For $\varphi_0, \ldots, \varphi_{m-1} \in \mathscr{D}^{M,h}_K$ we define 
	$$
	\Phi(x,t) = \Phi(\varphi_0, \ldots, \varphi_{m-1})(x,t) = \chi(t) \sum_{j=0}^{m-1}\sum_{k=0}^{m-1-j}Q_{j+k+1}(D_x)D_t^k S(\varphi_j)(x,t).
	$$ 
	Then, $\Phi \in \mathscr{D}(\R^{d+1})$ with $\supp \Phi \subset K \times \R$. Moreover, $P(D)\Phi = 0$ on $\R^d \times (-1/(Lh^{b_0}), 1/(Lh^{b_0}))$ and  the exact same argument as in the first part of the proof  shows that $\Phi$ satisfies $(i)$.
\end{proof}

\begin{remark}\label{remark-continuity}
	Choose $\chi \in \mathscr{D}(\R)$ such that $\chi = 1$ on a neighborhood of $0$. Let $\Phi$ be the function constructed in Proposition \ref{almost-zero}. Set
	$$
	||| \Phi ||| = \sup_{(x,t) \in \R^{d+1} \backslash \R^d}  |P(D) (\chi(t)\Phi(x,t))| e^{\omega_{M^{b_0,*}}\left(\frac{1}{Lh^{b_0}|t|}\right)} < \infty.
	$$
	An inspection of the proof of Proposition \ref{almost-zero} shows that  there is $C > 0$ with
	$$
	||| \Phi(\varphi_0, \ldots, \varphi_{m-1}) ||| \leq C\max_{j = 0, \ldots, m-1} \| \varphi_j\|_{\mathscr{E}^{M,h}(K)}, \qquad   \varphi_0, \ldots, \varphi_{m-1} \in \mathscr{D}^{M,h}_K.
	$$
\end{remark}

\begin{proposition}\label{converse-almost-zero}
	Let $M$ be a weight sequence satisfying $(M.4)_{a_0}$. There is $L>0$ such that for all $h> 0$ the following property holds: Let $\Phi  \in \mathscr{D}(\R^{d+1})$ be such that 
	\begin{equation}
		\sup_{(x,t) \in \R^{d+1} \backslash \R^d}  |P(D)\Phi(x,t)| e^{\omega_{M^{a_0,*}}\left(\frac{1}{h|t|}\right)} < \infty.
		\label{assumption-reverse} 
	\end{equation}
	Set $K = \supp \Phi( \,\cdot \,,0)$. Then, $D_t^l\Phi( \,\cdot \,,0) \in \mathscr{D}^{M,Lh^{1/a_0}}_K$ for all $l \in \N_0$.
\end{proposition}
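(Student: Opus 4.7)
The plan is to express $\Phi$ as a convolution with the fundamental solution $E$ of Proposition \ref{theo:existence of good fundamental solutions} and then balance the anisotropic singularity of $E$ at $t = 0$ against the ultradifferentiable decay of $P(D)\Phi$. Since $\Phi \in \mathscr{D}(\R^{d+1})$ is compactly supported and $P(D)\delta$ has support $\{0\}$, associativity of convolution yields
\[
\Phi = (E * P(D)\delta) * \Phi = E * (P(D)\delta * \Phi) = E * P(D)\Phi.
\]
The decay assumption on $P(D)\Phi$ dominates the singularity of $E$ near $\R^d$, so we may differentiate under the integral sign to obtain
\[
D_x^\alpha D_t^l \Phi(x,0) = \int_{\R^{d+1}} D_x^\alpha D_t^l E(x-y, -s)\, P(D)\Phi(y,s)\, dy\, ds.
\]

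Fix $l \in \N_0$ and choose $T > 0$ with $\supp \Phi \subseteq \R^d \times [-T, T]$. Estimate \eqref{eq:regularity} furnishes $L_E > 0$ and $S = S(l) > 0$ with
\[
\bigl| D_x^\alpha D_t^l E(x-y, -s) \bigr| \leq C\, L_E^{|\alpha|}\, |\alpha|!^{1/a_0}\, |s|^{-|\alpha|/a_0 - S}, \qquad 0 < |s| \leq T.
\]
The defining inequality of the associated function gives $e^{-\omega_{M^{a_0,*}}(\rho)} \leq (M_p^{a_0}/p!)\,\rho^{-p}$ for every $p \in \N_0$ and $\rho \geq 0$, so the hypothesis yields $|P(D)\Phi(y,s)| \leq C_\Phi\, (h|s|)^p\, M_p^{a_0}/p!$. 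Inserting both bounds and integrating over the compact support of $P(D)\Phi$ produces, for every $p \in \N_0$,
\[
|D_x^\alpha D_t^l \Phi(x,0)| \leq C'_l\, L_E^{|\alpha|}\, |\alpha|!^{1/a_0}\, h^p\, \frac{M_p^{a_0}}{p!}\, \int_0^T s^{p - |\alpha|/a_0 - S}\, ds.
\]

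Choosing $p = p_\alpha := \lceil |\alpha|/a_0\rceil + N$ with $N = N(l,S)$ large enough that $p_\alpha - |\alpha|/a_0 - S \geq 1$ renders the $s$-integral bounded by a constant depending only on $l$ and $T$. The conclusion then follows from four geometric comparisons: $h^{p_\alpha} \leq C_{h,l}\,(h^{1/a_0})^{|\alpha|}$; Stirling's formula gives $|\alpha|!^{1/a_0}/p_\alpha! \leq C\, R^{|\alpha|}$ since $p_\alpha \sim |\alpha|/a_0$; Lemma \ref{lemma:sequences} applied with exponent $1/a_0$ yields $M_{\lfloor |\alpha|/a_0\rfloor}^{a_0} \leq C\, L'^{|\alpha|}\, M_{|\alpha|}$; and $(M.2)$ absorbs the $O(1)$ shift $N$, giving $M_{p_\alpha}^{a_0} \leq C_l\, L''^{|\alpha|}\, M_{|\alpha|}$. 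Assembling these produces $|D_x^\alpha D_t^l \Phi(x,0)| \leq C_l\,(L h^{1/a_0})^{|\alpha|}\, M_{|\alpha|}$ with $L$ independent of $h$, as required; compact support of $D_t^l \Phi(\cdot,0)$ is inherited from $\Phi \in \mathscr{D}(\R^{d+1})$, while the stronger statement that this support lies in $K$ follows because the ultradifferentiable decay of $P(D)\Phi$ forces $D_t^l \Phi(\cdot, 0)$ to vanish at any point in whose neighborhood $\Phi(\cdot, 0)$ is identically zero.

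The main difficulty is the precise matching of anisotropic scales: the regularity of $E$ loses $|s|^{-1/a_0}$ per $x$-derivative and carries a Gevrey-$1/a_0$ factor $|\alpha|!^{1/a_0}$, while the decay of $P(D)\Phi$ is measured by $M^{a_0,*}$ at scale $1/|t|$; the choice $p_\alpha \approx |\alpha|/a_0$ is precisely what couples these two scales and converts the hypothesis' $h$-scale $1/|t|$ into the conclusion's $h^{1/a_0}$-scale. Condition $(M.4)_{a_0}$ enters via Lemma \ref{lemma-M4} and Lemma \ref{lemma:sequences} to allow the comparison between $M^{a_0}$ and $M$ at this critical step.
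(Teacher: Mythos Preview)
Your argument is essentially the paper's own proof: represent $\Phi$ as $E * P(D)\Phi$, push the derivatives onto $E$, balance the $|t|^{-|\alpha|/a_0-S}$ blow-up of the fundamental solution against the $\omega_{M^{a_0,*}}$-decay of $P(D)\Phi$ by choosing $p_\alpha \approx |\alpha|/a_0$, and finish with Lemma~\ref{lemma:sequences} and $(M.2)$. The estimates you write down are correct and match those in the paper.

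There is one genuine technical gap. The identity $\Phi = E * P(D)\Phi$ is an equality of \emph{distributions}; the convolution $(D_x^\alpha D_t^l E)*P(D)\Phi$ evaluated at $(x,0)$ is by definition the pairing $\langle D_x^\alpha D_t^l E,\ (P(D)\Phi)((x,0)-\cdot)\rangle$, not an integral. Your phrase ``differentiate under the integral sign'' does not address this: you must show that this distributional pairing equals $\int D_x^\alpha D_t^l E(x-y,-s)\,P(D)\Phi(y,s)\,dy\,ds$. This is precisely the content of Lemma~\ref{lemma:cons-taylor}, which the paper invokes at this step: one needs that $D_x^\alpha D_t^l E$ has finite local order, satisfies the polynomial bound \eqref{bounds-dist}, and that $P(D)\Phi$ vanishes at $t=0$ to an order exceeding both the distributional order of $E$ and the exponent in that bound. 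The hypothesis \eqref{assumption-reverse} does supply arbitrarily high vanishing of $P(D)\Phi$ at $t=0$, so the lemma applies, but the passage is not automatic and should be cited.

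Two minor remarks. Your claim that $\operatorname{supp}D_t^l\Phi(\cdot,0)\subseteq K=\operatorname{supp}\Phi(\cdot,0)$ is not justified by what you wrote (and the paper's proof does not address it either); what is immediate is containment in the $x$-projection of $\operatorname{supp}\Phi$, which suffices for the applications. Also, $(M.4)_{a_0}$ is not used in Lemma~\ref{lemma:sequences}; its role in this proposition is only, via Lemma~\ref{lemma-M4}$(ii)$, to guarantee that $\omega_{M^{a_0,*}}$ is meaningful and to separate the cases $p!^{1/a_0}\prec M$ and $p!^{1/a_0}\asymp M$.
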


\begin{remark}
	We would like to point out that Proposition \ref{converse-almost-zero} is not needed to prove the main results of this article in Section \ref{sect-bv} and Section \ref{sec:Main results} below. However, we believe this result is interesting in its own right as it provides a complete characterization of ultradifferentiable classes in terms of (almost) zero solutions of $P(D)$ (provided that $a_0 = b_0$); see Theorem \ref{main-thm-almost} and Remark \ref{main-thm-almost-remark} below.
\end{remark}

We need the following lemma to prove Proposition \ref{converse-almost-zero}. 
\begin{lemma}\label{lemma:cons-taylor}
	Let  $K \Subset \R^d$, $r > 0$ and $V \subseteq \R^{d+1}$ open {be} such that $K \times [-r,r] \Subset V$.
	Let $T \in \mathscr{D}'(V)$ be such that $T_{| V \backslash V \cap \R^d} \in L^1_{\operatorname{loc}}(V \backslash V \cap \R^d)$ and suppose that there is $N \in \N$ with
	\begin{equation}
		\sup_{(x,t) \in K \times [-r,r] \backslash K} |T(x,t)||t|^N < \infty.
		\label{bounds-dist}
	\end{equation}
	Furthermore, let $l \in \N_0$ and $C >0$ be such that
	\begin{equation}
		|\langle T, \varphi \rangle|  \leq C \| \varphi \|_l, \qquad \varphi \in \mathscr{D}_{K \times [-r,r]}.
		\label{order}
	\end{equation}
	Let $\varphi \in  \mathscr{D}_{K \times [-r,r]}$ be such that
	\begin{equation}
		\sup_{(x,t) \in \R^{d+1}\backslash \R^d} \frac{|\varphi(x,t)|}{|t|^{\max\{N,l+1\}}} < \infty
		\label{bounds-test}
	\end{equation}
	Then,
	$$
	\langle T, \varphi \rangle = \int _\R\int_{\R^d} T(x,t) \varphi(x,t) dx dt.
	$$
\end{lemma}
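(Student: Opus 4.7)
The plan is to isolate $\varphi$ from $\R^d$ by a one-dimensional cutoff in $t$, use the local integrability of $T$ off $\R^d$ to get the identity on the cut-off approximation, and then let the cutoff degenerate. Fix $\chi\in C^\infty(\R)$ with $\chi(t)=0$ for $|t|\leq 1/2$ and $\chi(t)=1$ for $|t|\geq 1$, and set $\chi_\varepsilon(t):=\chi(t/\varepsilon)$ for $0<\varepsilon\leq r$. Then $\chi_\varepsilon\varphi\in\mathscr{D}(V\setminus V\cap\R^d)$, so the local integrability assumption on $T$ there yields
\[
\langle T,\chi_\varepsilon\varphi\rangle = \int_\R\int_{\R^d} T(x,t)\chi_\varepsilon(t)\varphi(x,t)\,dx\,dt,\qquad 0<\varepsilon\leq r,
\]
and the conclusion will follow by taking $\varepsilon\to 0^+$ on both sides.

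For the left-hand side, I would show $\langle T,(1-\chi_\varepsilon)\varphi\rangle\to 0$. The decay assumption \eqref{bounds-test} together with the smoothness of $\varphi$ forces $D_t^j\varphi(\,\cdot\,,0)\equiv 0$ for $j=0,\ldots,\max\{N,l+1\}-1$, because otherwise a Taylor expansion in $t$ would contradict the pointwise bound. Taylor's formula with integral remainder in the $t$-variable then gives
\[
|D_x^\alpha D_t^k\varphi(x,t)|\leq C_\alpha |t|^{l+1-k},\qquad (x,t)\in\R^{d+1},\ k\leq l.
\]
Combining this with the standard estimates $|D_t^j(1-\chi_\varepsilon)|\leq C_j\varepsilon^{-j}$ and the fact that $1-\chi_\varepsilon$ is supported in $\{|t|\leq\varepsilon\}$, Leibniz's rule yields $\|(1-\chi_\varepsilon)\varphi\|_l\leq C\varepsilon$. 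In view of \eqref{order} this furnishes $\langle T,\chi_\varepsilon\varphi\rangle\to\langle T,\varphi\rangle$.

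For the right-hand side, the bounds \eqref{bounds-dist} on $T$ and \eqref{bounds-test} on $\varphi$ (with the $N$ part of the exponent) combine to give $|T(x,t)\varphi(x,t)|\leq C$ uniformly on $(K\times[-r,r])\setminus K$, so that $T\varphi\in L^1(K\times[-r,r])$. Since $\chi_\varepsilon(t)\to 1$ pointwise for $t\neq 0$ and $|\chi_\varepsilon|\leq 1$, Lebesgue dominated convergence delivers convergence of the integrals.

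The only delicate point, and the reason for the specific exponent $\max\{N,l+1\}$ in \eqref{bounds-test}, is the double role it must play: the $l+1$ part provides enough vanishing of $\varphi$ at $t=0$ to beat the order-$l$ bound \eqref{order} on $T$ in the distributional pairing, while the $N$ part absorbs the blow-up of $T$ near $\R^d$ to secure integrability of $T\varphi$. No single step is genuinely hard; the main care lies in the bookkeeping that matches the orders of vanishing of $\varphi$ against the parameters $N$ and $l$ coming from $T$.
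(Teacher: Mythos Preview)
Your proof is correct and follows essentially the same route as the paper's: a one-dimensional cutoff in $t$ (your $\chi_\varepsilon$ is precisely the paper's $1-\psi_\varepsilon$), Taylor's theorem to extract the vanishing of $D_x^\alpha D_t^k\varphi$ at $t=0$ needed to control $\|(1-\chi_\varepsilon)\varphi\|_l$, and dominated convergence for the integral side. The bookkeeping you describe matches the paper's argument step for step.
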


\begin{proof} In view of \eqref{bounds-test}, Taylor's theorem yields that $D^n_t\varphi( \, \cdot \,, 0) = 0$ for all $n \leq l$. Hence, also $D^\alpha_xD^n_t\varphi( \, \cdot \,, 0) = 0$ for all $\alpha \in \N^d$ and $n \leq l$. By another application of Taylor's theorem, we obtain that
	\begin{equation}
		\sup_{(x,t) \in \R^{d+1}\backslash \R^d} \frac{|D^\alpha_x D^{n-k}_t\varphi(x,t)|}{|t|^{k+1}} < \infty, \qquad \alpha \in \N^d, k \leq n \leq l.
		\label{taylor-forever}
	\end{equation}
	Choose $\psi \in \mathscr{D}(\R)$ such that $\psi = 1$ on a neighborhood of $0$. Set $\psi_\varepsilon(t) = \psi(t/\varepsilon)$ for $\varepsilon > 0$. Property \eqref{taylor-forever} implies that
	$$
	\lim_{\varepsilon \to 0^+} \|\psi_\varepsilon(t) \varphi(x,t) \|_l =  0.
	$$
	Hence, by \eqref{order},
	\begin{align*}
		\langle T, \varphi \rangle &= \lim_{\varepsilon \to 0^+} \langle T(x,t), (1-\psi_\varepsilon(t))\varphi(x,t) \rangle \\
		&= \lim_{\varepsilon \to 0^+}  \int_\R \int_{\R^d} T(x,t)  (1-\psi_\varepsilon(t))\varphi(x,t) dx dt \\
		&=  \int_\R \int_{\R^d} T(x,t) \varphi(x,t) dx dt,
	\end{align*}
	where the last equality is justified by Lebesgue's dominated convergence theorem and the inequalities \eqref{bounds-dist} and \eqref{bounds-test}.
\end{proof}

\begin{proof}[Proof of Proposition \ref{converse-almost-zero}]
	By Lemma \ref{lemma-M4}$(ii)$, we have that  either $p!^{1/a_0} \prec M$ or  $p!^{1/a_0} \asymp M$. We only consider the case $p!^{1/a_0} \prec M$ as the case $p!^{1/a_0} \asymp M$ can be treated similarly. Let $E$ be the fundamental solution of $P(D)$ constructed in Proposition \ref{theo:existence of good fundamental solutions}. Let $l \in \N_0$ be arbitrary.  Choose $r > 0$ such that $\supp \Phi \subset \R^d \times [-r,r]$. There are $C,S > 0$ such that\begin{equation}
		\sup_{\substack{x \in \R^d \\ 0 < |t| \leq r}}  | D^\alpha_x D^l_t  E(x,t)| |t|^{|\alpha|/a_0 + S}  \leq C{L^{|\alpha|} |\alpha| !^{1/a_0}}, \qquad \alpha \in \N^d_0.
		\label{good-sol}
	\end{equation}
	Set 
	$$
	p_\alpha = \left \lceil \frac{|\alpha|}{a_0} + S \right \rceil , \qquad \alpha \in \N_0^d.
	$$
	There are $C_1, L_1 > 0$ such that
	$$
	|\alpha|!^{1/a_0} \leq C_1L_1^{|\alpha|}p_\alpha!, \qquad \alpha \in \N_0^d.
	$$
	Lemma \ref{lemma:sequences} and $(M.2)$ imply that there are  $C_2,L_2 > 0$ such that
	$$
	M_{p_\alpha} \leq C_2L_2^{|\alpha|}M^{1/a_0}_{\alpha}, \qquad  \alpha \in \N_0^d.
	$$
	The above two inequalities and \eqref{assumption-reverse} yield that there are $C_3,L_3 > 0$ (with $L_3$ independent of $l$) such that
	\begin{equation}
		\sup_{(x,t) \in \R^{d+1} \backslash \R^d}   \frac{|P(D)\Phi(x,t)|}{|t|^{|\alpha|/a_0 + S}}  \leq \frac{C_3(L_3h^{1/a_0})^{|\alpha|}M_\alpha}{|\alpha|!^{1/a_0}}, \qquad \alpha \in \N^d.
		\label{assumption-mod}
	\end{equation} 
	Lemma \ref{lemma:cons-taylor} implies that for all  $x \in \R^d$ and $\alpha \in \N_0^d$
	\begin{align*}
		D^\alpha_xD^l_t \Phi(x,0) &= (D^\alpha_x D^l_t E \ast P(D) \Phi)(x,0) \\
		&= \langle  D^\alpha_xD^l_tE(x-y, -t), P(D)\Phi (y,t) \rangle \\
		&=   \int_\R \int_{\R^d} D^\alpha_xD^l_tE(x-y, -t) P(D)\Phi (y,t) dy dt.
	\end{align*}
	The inequalities \eqref{good-sol} and \eqref{assumption-mod} imply that for  all $\alpha \in \N_0^d$
	\begin{align*}
		\sup_{x \in \R^d} |D^\alpha_xD^l_t \Phi(x,0)| &\leq \sup_{x \in \R^d} \int_\R \int_{\R^d} |D^\alpha_xD^l_tE(x-y, -t)| | P(D)\Phi (y,t) | dy dt \\
		&\leq  |\supp \Phi |CC_3 (LL_3h^{1/a_0})^{|\alpha|} M_\alpha.
	\end{align*}
\end{proof}

Proposition \ref{almost-zero} and Proposition \ref{converse-almost-zero} yield the following  result. 

\begin{theorem}\label{main-thm-almost}
	Suppose that $a_0 = b_0$. Let $M$ be a weight sequence satisfying $(M.4)_{a_0}$ (and $p!^{1/a_0} \prec M$ in the Beurling case). Let $X \subseteq \R^d$ be open and let $V \subseteq \R^{d+1}$ be open such that $V \cap \R^d = X$. Let $\varphi_0, \ldots, \varphi_{m-1} \in \mathscr{D}(X)$. Then,  $\varphi_0, \ldots, \varphi_{m-1} \in \mathscr{D}^{[M]}(X)$ if and only if for all $h > 0$ (for some $h>0$) there exists $\Phi  \in \mathscr{D}(V)$ such that
	\begin{itemize}
		\item[$(i)$] $D^j_t \Phi( \, \cdot \,, 0) = \varphi_j$ for $j = 0, \ldots, m-1$.
		\item[$(ii)$] $\displaystyle \sup_{(x,t) \in \R^{d+1} \backslash \R^d}    |P(D)\Phi(x,t)| e^{\omega_{M^{a_0,*}}\left(\frac{1}{h|t|}\right)} < \infty$.
	\end{itemize}
\end{theorem}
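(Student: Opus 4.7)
My plan is to deduce both implications directly from Proposition \ref{almost-zero} and Proposition \ref{converse-almost-zero}. Under the hypothesis $a_0 = b_0$ the weights $\omega_{M^{a_0,*}}$ and $\omega_{M^{b_0,*}}$ coincide and the conditions $(M.4)_{a_0}$, $(M.4)_{b_0}$ are identical, so both results apply.

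For the ``if'' direction I would first extend the given $\Phi \in \mathscr{D}(V)$ by zero to an element of $\mathscr{D}(\R^{d+1})$; condition (ii) is preserved because $P(D)\Phi$ vanishes outside $V$. Taking $K := \supp \Phi(\,\cdot\,,0) \Subset X$ and invoking Proposition \ref{converse-almost-zero} then yields $\varphi_j = D_t^j \Phi(\,\cdot\,,0) \in \mathscr{D}^{M, L h^{1/a_0}}_K$, which places the $\varphi_j$ in $\mathscr{D}^{[M]}(X)$ in either variant.

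For the ``only if'' direction I would fix a common compact $K \Subset X$ containing the supports of the $\varphi_j$ and choose $r > 0$ with $K \times [-r,r] \Subset V$. With $h' > 0$ such that every $\varphi_j$ lies in $\mathscr{D}^{M,h'}_K$ (every $h' > 0$ in the Beurling case, some $h' > 0$ in the Roumieu case), Proposition \ref{almost-zero} supplies $\tilde\Phi \in \mathscr{D}(\R^{d+1})$ with $\supp \tilde\Phi \subseteq K \times \R$ satisfying (i) and (ii) with parameter $1/(L(h')^{a_0}|t|)$. To land inside $V$ I would multiply by a cutoff $\chi \in \mathscr{D}((-r,r))$ equal to $1$ on a neighbourhood of $0$ and set $\Phi := \chi\,\tilde\Phi$; property (i) survives since $\chi \equiv 1$ near $0$, and the Leibniz rule yields $P(D)\Phi = \chi\,P(D)\tilde\Phi + R$ where the remainder $R$ involves only positive-order $t$-derivatives of $\chi$, hence is supported in $\{|t|\geq \delta\}$ for some $\delta > 0$ and uniformly bounded on $\R^{d+1}$.

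The main, though modest, obstacle will be to verify that $R$ does not destroy the bound in (ii). On $\{|t|\geq \delta\}$ the factor $\omega_{M^{a_0,*}}(1/(h|t|))$ is dominated by $\omega_{M^{a_0,*}}(1/(h\delta))$, which must be finite: in the Roumieu case this is automatic once $h$ is chosen large enough that $1/(h\delta) \leq 1$, and in the Beurling case this is exactly where the hypothesis $p!^{1/a_0} \prec M$ enters, since by Lemma \ref{lemma-M4}(ii) it rules out the alternative $p!^{1/a_0} \asymp M$ and thereby ensures $\omega_{M^{a_0,*}}$ is finite-valued everywhere (cf.\ Remark \ref{associated-1}). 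Combining this observation with the estimate on $\tilde\Phi$ and the monotonicity of $\omega_{M^{a_0,*}}$, condition (ii) will hold with $h = L(h')^{a_0}$ (possibly enlarged in the Roumieu case); every $h > 0$ is then realized in the Beurling case by taking $h' = (h/L)^{1/a_0}$, while in the Roumieu case the single admissible $h'$ already suffices.
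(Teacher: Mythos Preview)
Your proposal is correct and follows exactly the route the paper intends: the theorem is recorded as an immediate consequence of Proposition~\ref{almost-zero} and Proposition~\ref{converse-almost-zero}, and you correctly supply the small bookkeeping steps (extending $\Phi$ by zero for the ``if'' direction, cutting off in $t$ to force $\operatorname{supp}\Phi\subset V$ for the ``only if'' direction, and checking that the remainder term $R$ lives where $\omega_{M^{a_0,*}}(1/(h|t|))$ stays bounded). The treatment of the cutoff is in fact the content behind Remark~\ref{remark-continuity}, so nothing you do goes beyond what the paper has already implicitly verified.
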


\begin{remark}\label{main-thm-almost-remark}
	For $M \asymp p!^{1/a_0}$  the following analogue of Theorem \ref{main-thm-almost} holds in the Beurling case: \emph{Suppose that $a_0 = b_0$. Let $X \subseteq \R^d$ be open. Let $\varphi_0, \ldots, \varphi_{m-1} \in \mathscr{D}(X)$. Then,  $\varphi_0, \ldots, \varphi_{m-1} \in \mathscr{D}^{(M)}(X)$ if and only if for all $h > 0$ there exists $\Phi  \in \mathscr{D}(X \times \R)$ such that
		\begin{itemize}
			\item[$(i)$] $D^j_t \Phi( \, \cdot \,, 0) = \varphi_j$ for $j = 0, \ldots, m-1$.
			\item[$(ii)$] $P(D)\Phi(x,t) = 0$ on $X \times (-1/h, 1/h)$.
		\end{itemize}
	}
\end{remark}

We end this section by giving an analogue of Proposition \ref{almost-zero} and Remark \ref{remark-continuity} for compactly supported smooth functions.

\begin{proposition}\label{almost-zero-smooth}  Let $K \Subset \R^d$ and let $\varepsilon > 0$. Let $N \in \N_0$. For all $\varphi_0, \ldots, \varphi_{m-1} \in \mathscr{D}_K$ there exists $\Phi  = \Phi(\varphi_0, \ldots, \varphi_{m-1}) \in \mathscr{D}_{K \times [-\varepsilon, \varepsilon]}$ such that
	\begin{itemize}
		\item[$(i)$] $D^j_t \Phi( \, \cdot \,, 0) = \varphi_j$ for $j = 0, \ldots, m-1$.
		\item[$(ii)$] $||| \Phi ||| = \displaystyle \sup_{(x,t) \in \R^{d+1} \backslash \R^d}   \frac{|P(D)\Phi(x,t)|}{|t|^N}  < \infty$.
	\end{itemize}
	Furthermore, there are $l \in \N_0$ and $C > 0$ such that
	$$
	||| \Phi(\varphi_0, \ldots, \varphi_{m-1}) ||| \leq C\max_{j = 0, \ldots, m-1} \| \varphi_j\|_{l}, \qquad   \varphi_0, \ldots, \varphi_{m-1} \in \mathscr{D}_K.
	$$
\end{proposition}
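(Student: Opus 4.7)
The plan is to adapt the construction from the proof of Proposition \ref{almost-zero} to the $C^\infty$ setting, replacing the modified infinite series with a sufficiently long polynomial truncation. Since $\varphi_j \in \mathscr{D}_K$ is merely smooth, the formal Ansatz $S(\varphi)(x,t) = \sum_{p \geq 0} \mathscr{C}_p(\varphi)(x)(it)^p/p!$ need not converge; but because we only require a finite order of vanishing $N$ of $P(D)\Phi$ at $t=0$, retaining finitely many terms suffices.

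Concretely, fix an integer $N' \geq N + 2m - 2$ and, for each $\varphi \in \mathscr{D}_K$, set
\[
S_{N'}(\varphi)(x,t) = \sum_{p=0}^{N'} \mathscr{C}_p(\varphi)(x)\,\frac{(it)^p}{p!}.
\]
Since each $\mathscr{C}_p$ is a constant coefficient differential operator that preserves supports, $S_{N'}(\varphi)(\cdot,t) \in \mathscr{D}_K$ for every $t \in \R$. A direct computation analogous to the formal manipulation in Proposition \ref{almost-zero}, using $D_t^k[(it)^p/p!] = (it)^{p-k}/(p-k)!$ and reindexing, gives
\[
P(D) S_{N'}(\varphi)(x,t) = \sum_{q = N' - m + 1}^{N'} \frac{(it)^q}{q!} \sum_{k=0}^{N'-q} Q_k(D_x)\mathscr{C}_{q+k}(\varphi)(x),
\]
because for $q \leq N' - m$ the inner sum runs over all $k = 0, \ldots, m$ and vanishes by \eqref{definition}. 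In particular, $P(D) S_{N'}(\varphi)$ has a zero of order at least $N' - m + 1$ at $t = 0$.

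Next, choose $\chi \in \mathscr{D}(\R)$ with $\chi \equiv 1$ on a neighborhood of $0$ and $\supp \chi \subseteq [-\varepsilon,\varepsilon]$, and define
\[
\Phi(\varphi_0, \ldots, \varphi_{m-1})(x,t) := \chi(t)\sum_{j=0}^{m-1}\sum_{k=0}^{m-1-j} Q_{j+k+1}(D_x) D_t^k S_{N'}(\varphi_j)(x,t) \in \mathscr{D}_{K \times [-\varepsilon, \varepsilon]}.
\]
Property $(i)$ follows exactly as in Proposition \ref{almost-zero}: since $\chi \equiv 1$ near $0$, the values $D_t^n \Phi(\cdot, 0)$ are determined by $\{\mathscr{C}_l(\varphi_j)\}$ in the familiar way, and \cite[Proposition 4.2]{Kalmes18-2} then yields $D_t^n \Phi(\cdot, 0) = \varphi_n$ for $n = 0, \ldots, m-1$.

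For $(ii)$, on the open set where $\chi \equiv 1$ we have $P(D)\Phi = \sum_{j,k} Q_{j+k+1}(D_x) D_t^k P(D) S_{N'}(\varphi_j)$, which by the previous paragraph vanishes to order at least $N' - m + 1 - (m-1) = N' - 2m + 2 \geq N$ at $t = 0$. In the complementary region, $|t|$ is bounded below by a positive constant, so $|P(D)\Phi|/|t|^N$ is bounded there as well. Since $\Phi$ is produced from the $\varphi_j$'s by applying finitely many constant coefficient differential operators whose total order depends only on $N$, $m$, the polynomials $Q_k$, and finitely many derivatives of $\chi$, both bounds take the form $C \max_j \|\varphi_j\|_l$ for some $l = l(N, m, P)$ independent of the $\varphi_j$'s. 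No step is genuinely delicate: the substance of the proof is the algebraic cancellation forced by \eqref{definition}, which makes a finite truncation of the formal Ansatz sufficient in the smooth category.
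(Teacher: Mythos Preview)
Your proof is correct and follows essentially the same approach as the paper: truncate the formal power series Ansatz to a polynomial of sufficiently high degree in $t$, use \eqref{definition} to see that $P(D)$ applied to the truncation vanishes to high order at $t=0$, and then localize with a cutoff in $t$. The paper's own proof is only a brief sketch referring back to Proposition \ref{almost-zero}, whereas you have written out the computation of $P(D)S_{N'}$ and the required order $N' \geq N + 2m - 2$ explicitly; the only cosmetic difference is that the paper places the cutoff $\psi$ inside the definition of $S_n$ rather than outside on $\Phi$, which is immaterial.
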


\begin{proof} Let $\psi \in \mathscr{D}_{[-\varepsilon,\varepsilon]}$ be such that $\psi = 1$ on a neighborhood of $0$.  For $n \in \N$ and $\varphi \in \mathscr{D}_K$ we define
	$$
	S_n(x,t)  = S_{n}(\varphi)(x,t) = \psi(t)\sum_{p=0}^n  \mathscr{C}_p(\varphi)(x)  \frac{(it)^p}{p!}, \qquad (x,t) \in \R^{d+1}.
	$$
	The result can now be shown in a similar way as Proposition \ref{almost-zero} but starting from $S= S_{n}$ with $n$ large enough instead of the function defined in \eqref{formula-S}.
\end{proof}

\section{Boundary values of zero solutions of $P(D)$}\label{sect-bv}

In this section we show that zero solutions of $P(D)$ satisfying suitable growth estimates near $\R^d$ have  boundary values in a given ultradistribution space. We start with the following fundamental definition.
\begin{definition}
	Let $M$ be a weight sequence. Let $X \subseteq \R^d$ be open and let $V \subseteq \R^{d+1}$ be open such that $V \cap \R^d = X$. The \emph{boundary value} $\bv(f) \in \mathscr{D}'^{[M]}(X)$ of an element $f \in C^\infty_P(V \backslash X)$ is defined as
	\begin{equation}
		\langle \bv(f), \varphi \rangle := \lim_{t,s \to 0^+} \int_{X} (f(x,t ) - f(x,-s))  \varphi(x) dx, \qquad \varphi \in \mathscr{D}^{[M]}(X),
		\label{def-bv}
	\end{equation}
	provided that  $\bv(f)  \in \mathscr{D}'^{[M]}(X)$ exists. 
\end{definition}
Since $\mathscr{D}^{[M]}(X)$ is barrelled,  $\bv(f)  \in \mathscr{D}'^{[M]}(X)$ exists if and only if the limit in the right-hand side of \eqref{def-bv} exists and is finite for all $\varphi \in \mathscr{D}^{[M]}(X)$.
\begin{lemma}\label{x-derivatives}
	Let $M$ be a weight sequence. Let $X \subseteq \R^d$ be open and let $V \subseteq \R^{d+1}$ be open such that $V \cap \R^d = X$. Let $f \in C^\infty_P(V \backslash X)$ be such that $\bv(f)  \in \mathscr{D}'^{[M]}(X)$ exists. Then, 
	\begin{itemize}
		\item[$(i)$] $\bv(D^\alpha_xf) = D^\alpha_x \bv(f)   \in \mathscr{D}'^{[M]}(X)$ exists for all $\alpha \in \N_0^d$. 
		\item[$(ii)$] Assume additionally that $p!^{1/\gamma_0} \prec M$ ($p!^{1/\gamma_0} \subset M$). Let $G(D_x)$  be an ultradifferential operator of class $[M]$. Then, $G(D_x)f \in  C^\infty_P(V \backslash X)$ and  $\bv(G(D_x)f) = G(D_x) \bv(f) \in \mathscr{D}'^{[M]}(X)$ exists.
	\end{itemize}
\end{lemma}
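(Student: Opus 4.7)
For part $(i)$, I would begin by noting that since $P(D)$ has constant coefficients, the operators $D^\alpha_x$ and $P(D)$ commute, so $D^\alpha_x f \in C^\infty_P(V\backslash X)$. The existence of $\bv(D^\alpha_x f)$ and the formula $\bv(D^\alpha_x f) = D^\alpha_x \bv(f)$ would then follow from integration by parts on the level of the defining integral in $\eqref{def-bv}$: for $\varphi \in \mathscr{D}^{[M]}(X)$, the function $\varphi$ has compact support in $X$, and since $f(\,\cdot\,,t)$ and $f(\,\cdot\,, -s)$ are smooth on a neighborhood of $\supp \varphi$ for small $t, s > 0$, one has
\[
\int_X (D^\alpha_x f(x,t) - D^\alpha_x f(x,-s))\varphi(x)\, dx = \int_X (f(x,t) - f(x,-s))\, D^\alpha_x \varphi(x)\, dx.
\]
Taking the limit $t,s \to 0^+$ on both sides yields $\langle \bv(D^\alpha_x f), \varphi\rangle = \langle \bv(f), D^\alpha_x \varphi\rangle = \langle D^\alpha_x \bv(f), \varphi\rangle$, where the last equality is the definition of distributional derivative.

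For part $(ii)$, the plan is to first verify that $G(D_x)f$ is a well-defined smooth function in $V\backslash X$ annihilated by $P(D)$, and then transfer $G(D_x)$ to the test function by a term-by-term integration-by-parts argument. Write $G(z) = \sum_{\alpha} c_\alpha z^\alpha$. Since $f \in C^\infty_P(V\backslash X) \subset \Gamma^{1/\gamma_0, 1/\mu_0}(V\backslash X)$, on every $K \Subset V\backslash X$ there exists $h_1 > 0$ with $|D^\alpha_x D^l_t f(x,t)| \leq C h_1^{|\alpha|+l} |\alpha|!^{1/\gamma_0} l!^{1/\mu_0}$. The coefficients satisfy $|c_\alpha| \leq C_0 k^{|\alpha|}/M_\alpha$ for some $k>0$ (Roumieu) or every $k > 0$ (Beurling). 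The hypothesis $p!^{1/\gamma_0} \prec M$ (resp. $\subset M$) then makes the series $\sum_\alpha c_\alpha D^\alpha_x D^l_t f(x,t)$ uniformly absolutely convergent on $K$ for every $l$, showing $G(D_x)f \in C^\infty(V\backslash X)$. Since $G(D_x)$ and $P(D)$ commute in the sense of formal power series and the termwise application $G(D_x) P(D)f = 0$ makes sense, one obtains $P(D)G(D_x)f = 0$.

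To handle the boundary value, let $\varphi \in \mathscr{D}^{[M]}(X)$. Because $G(D_x)$ maps $\mathscr{D}^{[M]}(X)$ continuously into itself, $G(D_x)\varphi \in \mathscr{D}^{[M]}(X)$ and the series $G(D_x)\varphi(x) = \sum_\alpha c_\alpha D^\alpha_x \varphi(x)$ converges absolutely and uniformly on $\supp \varphi$. For fixed small $t \neq 0$, the uniform convergence of $\sum_\alpha c_\alpha D^\alpha_x f(\,\cdot\,, t)$ on $\supp \varphi$, combined with integration by parts as in part $(i)$, yields
\[
\int_X G(D_x)f(x,t)\, \varphi(x)\, dx = \sum_\alpha c_\alpha \int_X D^\alpha_x f(x,t)\, \varphi(x)\, dx = \int_X f(x,t)\, G(D_x)\varphi(x)\, dx,
\]
and similarly for $-s$. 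Passing to the limit $t, s \to 0^+$ gives $\langle \bv(G(D_x)f), \varphi\rangle = \langle \bv(f), G(D_x)\varphi\rangle = \langle G(D_x) \bv(f), \varphi\rangle$, the last equality being the definition of $G(D_x)$ on ultradistributions via transposition.

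The main obstacle is the convergence analysis in part $(ii)$: one must carefully combine the Gevrey bound coming from $C^\infty_P(V\backslash X) \subset \Gamma^{1/\gamma_0,1/\mu_0}$ with the growth condition on the coefficients $c_\alpha$ dictated by $M$, using $p!^{1/\gamma_0} \prec M$ (Beurling) or $\subset M$ (Roumieu) to produce a summable majorant. This is a standard computation with associated functions of weight sequences (using that $\omega_M(\rho) - \omega_{p!^{1/\gamma_0}}(\rho) \to \infty$ suitably), but it is the one place where the hypothesis relating $\gamma_0$ and $M$ is essential.
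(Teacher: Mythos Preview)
Your approach matches the paper's: part $(i)$ is integration by parts (the paper simply writes ``Obvious''), and for part $(ii)$ both you and the paper exploit the Gevrey regularity $C^\infty_P(V\backslash X)\subset\Gamma^{1/\gamma_0,1/\mu_0}(V\backslash X)$ together with the hypothesis relating $M$ and $p!^{1/\gamma_0}$ to make $G(D_x)f$ well-defined; the paper phrases this more tersely by noting that the hypothesis forces $G(D_x)$ to be an ultradifferential operator of class $\{p!^{1/\gamma_0}\}$, after which its action on $\Gamma^{1/\gamma_0,1/\mu_0}$ is standard.

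One slip worth correcting: you have the Beurling/Roumieu conventions reversed. In the paper's convention an ultradifferential operator of class $(M)$ satisfies $|c_\alpha|\leq C h^{|\alpha|}/M_\alpha$ for \emph{some} $h>0$, while class $\{M\}$ requires it for \emph{all} $h>0$. With the correct labeling the convergence argument closes exactly as intended: in the Beurling case $k$ is fixed and one needs $p!^{1/\gamma_0}\prec M$ to absorb the product $kh_1$; in the Roumieu case $k$ may be taken arbitrarily small, so $p!^{1/\gamma_0}\subset M$ suffices. As written, your Roumieu case (fixed $k$, only $p!^{1/\gamma_0}\subset M$) would not produce a summable majorant.
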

\begin{proof}
	$(i)$ Obvious.
	
	$(ii)$ The condition $p!^{1/\gamma_0} \subset M$ (and thus also $p!^{1/\gamma_0} \prec M$) implies that $G(D_x)$ is an ultradifferential operator of class $\{p!^{1/\gamma_0}\}$. Since $f \in C^\infty_P(V \backslash X) \subset \Gamma^{1/\gamma_0, 1/\mu_0}(V\backslash X)$ (see Subsection \ref{subsect:Diff-oper}), we have that $G(D_x)f \in C^\infty(V\backslash X)$ and  $P(D) G(D_x) f = G(D_x) P(D) f = 0$. Hence, $G(D_x)f  \in C^\infty_P(V \backslash X)$. The second statement is clear.
\end{proof}

We now  introduce various weighted spaces of  zero solutions of $P(D)$. Let $M$ be a positive sequence. Suppose that $p! \prec M$. Let $X \subseteq \R^d$ be open and let $V \subseteq \R^{d+1}$ be open such that $V \cap \R^d = X$.  For $h > 0$ we define $\mathscr{B}_{P,M,h}(V \backslash X)$ as the Banach space consisting of all $f \in C^\infty_P(V \backslash X)$ such that
$$
\| f \|_{\mathscr{B}_{P,M,h}(V \backslash X)} := \sup_{(x,t) \in V \backslash X} |f(x,t)| e^{-\omega_{M^{1,*}}\left(\frac{1}{h|t|}\right)} < \infty.
$$
Let $b > 0$ and suppose that $p!^{1/b} \prec M$. Choose a sequence $(V_l)_{l \in \N_0}$ of relatively compact open subsets of $V$ such that $\overline{V}_l \Subset V_{l+1}$ and $V = \bigcup_{l \in \N_0} V_{l}$. Set $X_l = V_l \cap X$ for $l \in \N_0$. We define
\begin{gather*}
	C^\infty_{P,(M),b}(V \backslash X) := \varprojlim_{l \in \N_0} \varinjlim_{h \to 0^+} \mathscr{B}_{P,M^{b},h}(V_{l}\backslash X_l), \\
	C^\infty_{P,\{M\},b}(V \backslash X) := \varprojlim_{l \in \N_0} \varprojlim_{h \to \infty} \mathscr{B}_{P,M^{b},h}(V_{l}\backslash X_l).
\end{gather*}
This definition is  independent of the chosen sequence  $(V_{l})_{l \in \N_0}$. Obviously, it holds that $C^\infty_P(V)\subset C^\infty_{P,[M],b}(V\backslash X)$. If $p!^{1/b} \asymp M$, we set $C^\infty_{P,\{M\},b}(V \backslash X) := C^\infty_P(V \backslash X)$. 

Next, we define for  $j = 1, \ldots, m$
$$
P_{(j)}(x,t) = \sum_{k=j}^m Q_k(x) t^{k-j}.
$$
Since $Q_m = 1$, we have the recursion relations 
\begin{equation}
	\label{base-change}
	t^j= \begin{cases}
		P_{(m)},&  j=0, \\
		P_{(m-j)} -  \sum_{k=0}^{j-1}Q_{k+m-j}t^k, &  j = 1, \ldots, m-1.\\
	\end{cases}
\end{equation}

We are ready to discuss the existence of ultradistributional boundary values of zero solutions of $P(D)$.
\begin{proposition}\label{existence-bv}
	Let $M$ be a weight sequence satisfying $(M.4)_{b_0}$ (and $p!^{1/b_0} \prec M$ in the Beurling case). Let $X \subseteq \R^d$ be open and let $V \subseteq \R^{d+1}$ be open such that $V \cap \R^d = X$. For all $j = 0, \ldots, m-1$ the mapping
	$$
	C^\infty_{P,[M],b_0}(V \backslash X) \rightarrow \mathscr{D}'^{[M]}(X), \, f \mapsto \operatorname{bv}(P_{(j+1)}(D) f)
	$$
	is well-defined and continuous.
\end{proposition}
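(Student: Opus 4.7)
Following the strategy sketched in the introduction (the H\"ormander-Petzsche-Vogt technique for holomorphic functions, adapted to $P(D)$), I would combine the almost-zero-solution extension supplied by Proposition \ref{almost-zero} with the Stokes-type identity for $P(D)$ (Lemma \ref{lemma-Green}). Fix $\varphi \in \mathscr{D}^{[M]}(X)$ with support in some $K \Subset X$ and choose $t_0>0$ so that $K \times [-t_0, t_0] \Subset V$. The aim is to show that the pairing
$$\int_X \bigl[P_{(j+1)}(D)f(x,s) - P_{(j+1)}(D)f(x,-r)\bigr]\varphi(x)\,dx$$
converges as $s,r \to 0^+$ and to estimate the limit by continuous seminorms of $f$ and $\varphi$.

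First, I would apply Proposition \ref{almost-zero} to the Cauchy data $(\delta_{ij}\varphi)_{i=0}^{m-1}$ (for arbitrary $h>0$ in the Beurling case, and for $h \in \mathfrak{R}$ in the Roumieu case via Lemma \ref{projective-description}), and multiply the resulting function by a cutoff in $t$ supported in $(-t_0, t_0)$ and equal to $1$ near $t=0$. This yields $\Phi \in \mathscr{D}(V)$ with $\supp \Phi \subseteq K \times (-t_0, t_0)$, $D_t^i \Phi(\,\cdot\,,0) = \delta_{ij}\varphi$ for $i = 0,\ldots,m-1$, and the decay estimate
$$|P(D)\Phi(x,t)| \leq C\,\|\varphi\|_{\mathscr{E}^{M,h}(K)}\,e^{-\omega_{M^{b_0,*}}(1/(Lh^{b_0}|t|))}$$
for $|t|$ small, by Remark \ref{remark-continuity}, with $L$ depending only on $K$; the cutoff contribution lives in the non-singular region $|t| \geq t_0/2$ and is uniformly bounded. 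Applying Lemma \ref{lemma-Green} to $(f,\Phi)$ on the two-sided slab $\Omega_{s,r}=X\times(\{s<t<t_0\}\cup\{-t_0<t<-r\})$, and using that $P(D)f=0$ on $V\setminus X$ together with the vanishing of $\Phi$ near $|t|=t_0$, one obtains an identity equating $\int_{\Omega_{s,r}} f(x,t)\,P(D)\Phi(x,t)\,dx\,dt$ to a sum of inner boundary contributions at $t=s$ and $t=-r$, each a linear combination of $\int_X P_{(i+1)}(D)f(x,\cdot)\,D_t^i\Phi(x,\cdot)\,dx$. As $s,r \to 0^+$, the interior integral converges absolutely because the growth $|f(x,t)| \leq C'e^{\omega_{M^{b_0,*}}(1/(h_1|t|))}$ inherited from $f \in C^\infty_{P,[M],b_0}(V\setminus X)$ is dominated by the decay of $P(D)\Phi$ as soon as $Lh^{b_0} \leq h_1$. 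Invoking $D_t^i\Phi(\,\cdot\,,0) = \delta_{ij}\varphi$, the $i=j$ piece on the boundary side produces the desired quantity, and the residual contributions (from the Taylor deviation of $D_t^i\Phi(\,\cdot\,,s')$ and from the $i \neq j$ terms) collapse by matching them against the same convergent interior integral.

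Tracing the constants via Remark \ref{remark-continuity} yields an estimate of the form
$$|\langle \operatorname{bv}(P_{(j+1)}(D)f),\varphi\rangle| \leq C\,\|f\|_{\mathscr{B}_{P,M^{b_0},h_1}(V_\ell\setminus X_\ell)}\,\|\varphi\|_{\mathscr{E}^{M,h}(K)},$$
from which the continuity of the stated map follows. The main obstacle is the matching of parameters: in the Beurling case $\varphi \in \mathscr{D}^{M,h}_K$ for every $h>0$, so after fixing $h_1$ from $f$ one may simply shrink $h$ to guarantee $Lh^{b_0} \leq h_1$; in the Roumieu case the quantifier structure ``$\forall h_1$'' (for $f$) versus ``$\exists h$'' (for $\varphi$) forces one to use the parametric description of $\mathscr{D}^{\{M\}}_K$ via sequences in $\mathfrak{R}$ (Lemma \ref{projective-description}) and to perform a diagonal-type selection. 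A secondary technical point is verifying that the residual boundary terms combine correctly as $s,r \to 0^+$; this uses the specific Taylor structure of $\Phi$ built in Proposition \ref{almost-zero}, which ensures $C^\infty$-regularity across $t=0$ with the prescribed Cauchy data, so that these residuals are absorbed by the same Stokes identity applied on nested sub-slabs.
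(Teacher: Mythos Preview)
Your overall strategy --- combine the almost-zero-solution extension of Proposition \ref{almost-zero} with the Stokes-type identity of Lemma \ref{lemma-Green} --- is exactly the paper's. The gap is in how you extract the single boundary term $\int_X P_{(j+1)}(D)f(x,s)\varphi(x)\,dx$ from the full sum that Lemma \ref{lemma-Green} produces. Applying the lemma on $[s,t_0]$ gives
\[
\int_s^{t_0}\!\!\int_X f\,\check{P}(D)\Phi\,dx\,dt \;=\; -i\sum_{i=0}^{m-1}(-1)^i\!\int_X P_{(i+1)}(D)f(x,s)\,D_t^i\Phi(x,s)\,dx,
\]
and the left side indeed converges as $s\to 0^+$. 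But this only tells you the \emph{sum} on the right converges; the individual residual terms involve products of $P_{(i+1)}(D)f(\,\cdot\,,s)$ (which you have no direct growth control over --- only $f$ itself is bounded by $e^{\omega_{M^{b_0,*}}(1/(h|s|))}$) with $D_t^i\Phi(\,\cdot\,,s)-\delta_{ij}\varphi$, which is merely $O(s)$. Your claim that these ``collapse by matching them against the same convergent interior integral'' is circular: the only thing the interior integral matches is the full sum, so you cannot separate the desired $i=j$ piece without an independent argument, and ``nested sub-slabs'' does not supply one.

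The paper avoids this entirely by a translation trick: it applies Lemma \ref{lemma-Green} to the \emph{shifted} function $f_t(x,s):=f(x,s+t)$, which for fixed $t>0$ is smooth on $Y\times[0,r/2]$, on the fixed interval $[0,r/2]$. The lower boundary is then at $s=0$, where $D_t^i\Phi(\,\cdot\,,0)=\delta_{ij}\varphi$ exactly, so only the $i=j$ term survives and one gets the clean identity
\[
\int_Y P_{(j+1)}(D)f(x,t)\varphi(x)\,dx=(-1)^j i\int_0^{r/2}\!\!\int_Y f(x,s+t)\,\check{P}(D)(\psi\Phi)(x,s)\,dx\,ds,
\]
whose right side converges to an absolutely convergent integral as $t\to 0^+$ by dominated convergence. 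Two smaller points: Proposition \ref{almost-zero} must be applied to $\check{P}$, not $P$, since Lemma \ref{lemma-Green} pairs $f$ with $\check{P}(D)\Phi$; and in the Roumieu case no diagonal selection is needed --- $\varphi$ lies in some $\mathscr{D}^{M,h}_K$, and $f$ satisfies the growth bound for \emph{every} parameter, so one simply matches directly.
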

We shall show Proposition \ref{existence-bv} by combining the description of ultradifferentiable functions via (almost) zero solutions of $P(D)$ (Proposition \ref{almost-zero}) with the identity obtained in the next lemma. As mentioned in the introduction, this is a generalization of the method of establishing the existence of  (ultra)distributional boundary values of holomorphic functions  by combining almost analytic extensions with Stokes' theorem (more precisely, the formula in \cite[Equation (3.1.9), p.\ 62]{HoermanderPDO1}).
\begin{lemma} \label{lemma-Green}
	Let $X \subseteq \R^d$ be open and let $a,b \in \R$ with $a < b$.  For all $f\in C^\infty(X \times [a,b])$ and $\Phi\in\mathscr{D}(X \times \R)$ it holds that
	\begin{align*}
		&\int_a^b\int_{X} f(x,s)\check{P}(D)\Phi(x,s)dxds =\int_a^b\int_{X}P(D)f(x,s)\,\Phi(x,s)dxds\\
		&+ i\sum_{j=0}^{m-1} (-1)^j  \int_{X} P_{(j+1)}(D)f(x,b)D_t^j\Phi(x,b) - P_{(j+1)}(D)f(x,a)D_t^j\Phi(x,a)dx,
	\end{align*}
	where, as usual, $\check{P}(\xi)=P(-\xi)$.
\end{lemma}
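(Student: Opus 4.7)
The plan is to perform integration by parts separately in the spatial and temporal variables. Since $\Phi\in\mathscr{D}(X\times\R)$ has compact support in $x$, the $x$-integrations produce no boundary terms; only the $t$-integration contributes, and solely at $s=a$ and $s=b$. The main work will be to re-index the resulting double sum of boundary terms so as to recognise the operators $P_{(j+1)}(D)$.

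Expanding $\check{P}(\xi,\tau)=\sum_{k=0}^m(-1)^k\check{Q}_k(\xi)\tau^k$, the first step is to move each spatial operator $\check{Q}_k(D_x)$ from $\Phi$ onto $f$. For any multi-index $\alpha$, integration by parts in $x$ yields $\int_X f(x,s)D_x^\alpha\psi(x)\,dx=(-1)^{|\alpha|}\int_X D_x^\alpha f(x,s)\cdot\psi(x)\,dx$, with no boundary contribution. The sign $(-1)^{|\alpha|}$ cancels exactly the $(-1)^{|\alpha|}$ arising from passing from $\check{Q}_k$ to $Q_k$, so $\int_X f\,\check{Q}_k(D_x)\psi\,dx=\int_X Q_k(D_x)f\cdot\psi\,dx$. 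Applied slice by slice in $s$, this transforms the left-hand side of the claim into $\sum_{k=0}^m(-1)^k\int_a^b\int_X Q_k(D_x)f\cdot D_t^k\Phi\,dx\,ds$.

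Next I would establish, by induction on $k$, the one-dimensional identity
\[
\int_a^b g(s)\,D_t^k h(s)\,ds=(-1)^k\int_a^b D_t^k g(s)\cdot h(s)\,ds-i\sum_{j=0}^{k-1}(-1)^j\bigl[D_t^j g\cdot D_t^{k-1-j}h\bigr]_{s=a}^{s=b},
\]
whose base case $\int_a^b g\,D_t h\,ds=-\int_a^b D_t g\cdot h\,ds-i[gh]_a^b$ is a direct computation starting from $D_t=-i\partial_t$. Applied pointwise in $x$ with $g(s)=Q_k(D_x)f(x,s)$ and $h(s)=\Phi(x,s)$ and summed against the $(-1)^k$ factors, the interior terms recombine via $P(D)=\sum_k Q_k(D_x)D_t^k$ to produce the desired $\int_a^b\int_X P(D)f\cdot\Phi\,dx\,ds$.

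The remaining boundary contribution is the double sum $-i\sum_{k=1}^m\sum_{j=0}^{k-1}(-1)^{k+j}\int_X\bigl[D_t^j Q_k(D_x)f\cdot D_t^{k-1-j}\Phi\bigr]_a^b\,dx$, and here the main obstacle is purely combinatorial bookkeeping. I would substitute $\ell=k-1-j$ and swap the order of summation so that $\ell$ ranges outside over $0,\ldots,m-1$ and $k$ inside over $\ell+1,\ldots,m$. The inner operator acting on $f$ is then $\sum_{k=\ell+1}^m Q_k(D_x)D_t^{k-\ell-1}$, which by the definition $P_{(j)}(x,t)=\sum_{k=j}^m Q_k(x)t^{k-j}$ equals $P_{(\ell+1)}(D)$. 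A short sign count, $-i(-1)^{k+j}=-i(-1)^{2k-1-\ell}=i(-1)^\ell$, then delivers precisely the right-hand side of the claim after renaming $\ell$ back to $j$.
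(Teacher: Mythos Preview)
Your proof is correct and follows essentially the same route as the paper's: the paper's proof simply reads ``This follows by multiple partial integrations,'' and the (commented-out) detailed version proceeds exactly as you do---moving $Q_k(D_x)$ across by integration by parts in $x$, applying the one-dimensional identity for $\int_a^b g\,D_t^k h\,ds$, and then reindexing the resulting double sum to recognise $P_{(j+1)}(D)$. Your sign bookkeeping is accurate.
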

\begin{proof} This follows by multiple partial integrations.
	\end{proof}

\begin{proof}[Proof of Proposition \ref{existence-bv}]
	We only show the Beurling case as the Roumieu case can be treated similarly. Let $ 0 \leq j \leq m-1$ be arbitrary.  It suffices to show that for all relatively compact open subsets $Y$ of $\R^d$, $r,h >0$ and  $K \Subset Y$ the mapping
	$$
	\mathscr{B}_{P,M^{b_0},h}(Y \times (-r,r) \backslash Y) \rightarrow (\mathscr{D}^{(M)}_K)'_b, \, f \mapsto \operatorname{bv}(P_{(j+1)}(D) f)
	$$
	is well-defined and continuous. Let $f \in \mathscr{B}_{P,M^{b_0},h}(Y \times (-r,r) \backslash Y)$ be arbitrary.  We set $f_t(x,s) = f(x,s+t)$ for $0 <t < r/2$. Then, $f_t \in C^\infty(Y \times [0,r/2])$. For $\varphi \in \mathscr{D}^{(M)}_K$ we consider the function $\Phi = \Phi(0, \ldots, \varphi_j = \varphi, \ldots, 0)$ from Proposition \ref{almost-zero} (with $\check{P}$ and   $(h/L)^{1/b_0}$ instead of $P$ and $h$). Choose $\psi \in \mathscr{D}(\R)$ with $\supp \psi  \subseteq [-r/2, r/2]$ and $\psi = 1$ on a neighborhood of $0$. By applying Lemma \ref{lemma-Green} to $f_t$ and $\psi\Phi$,  we obtain that  for $0<t<r/2$ 
	$$
	\int_{Y} P_{(j+1)}(D) f(x,t) \varphi(x) dx = (-1)^ji \int_0^{r/2} \int_Y f(x,s+t) \check{P}(D) (\psi(s)\Phi(x,s)) dxds.
	$$
	Likewise, one may show that 
	\begin{align*}
		&\int_{Y} P_{(j+1)}(D) f(x,-t) \varphi(x) dx\\ =& (-1)^{j+1}i \int_{-r/2}^0 \int_Y f(x,s-t) \check{P}(D) (\psi(s)\Phi(x,s)) dxds.
	\end{align*}
	Hence,
	$$
	\langle \operatorname{bv}(P_{(j+1)}(D) f), \varphi \rangle = (-1)^ji \int_{-r/2}^{r/2} \int_Y f(x,s) \check{P}(D)(\psi(s) \Phi(x,s)) dxds, 
	$$
	where the integral at the right hand-side is convergent by property $(ii)$ of Proposition \ref{almost-zero}. The result now follows from Remark \ref{remark-continuity}.
\end{proof}

\begin{proposition}\label{existence-bv-1}
	Let $M$ be a weight sequence satisfying $(M.4)_{b_0}$ (and $p!^{1/b_0} \prec M$ in the Beurling case). Let $X \subseteq \R^d$ be open and let $V \subseteq \R^{d+1}$ be open such that $V \cap \R^d = X$. For all $l \in \N_0$ the mapping
	$$
	C^\infty_{P,[M],b_0}(V\backslash X) \rightarrow \mathscr{D}'^{[M]}(X), \, f \mapsto \operatorname{bv}(D^l_t f)
	$$
	is well-defined and continuous. 
\end{proposition}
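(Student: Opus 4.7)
The plan is to bootstrap Proposition \ref{existence-bv} from the specific operators $P_{(j+1)}(D)$ to all powers $D_t^l$, using the triangular change of basis supplied by the recursion \eqref{base-change} together with the equation $P(D)f = 0$.

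First I would treat the range $0 \leq l \leq m-1$. A straightforward strong induction on $j$ applied to \eqref{base-change} produces polynomials $R_{j,i} \in \C[X_1,\ldots,X_d]$ ($0 \leq j,i \leq m-1$) such that
$$
t^j = \sum_{i=0}^{m-1} R_{j,i}(x) P_{(i+1)}(x,t)
$$
as polynomials in $(x,t)$. The base case is $t^0 = P_{(m)}$ (since $Q_m=1$), and the inductive step uses \eqref{base-change} once and then invokes the induction hypothesis for each $t^k$ with $k<j$. Passing to the associated constant coefficient partial differential operators (which commute among themselves), this yields the operator identity $D_t^j = \sum_{i=0}^{m-1} R_{j,i}(D_x) P_{(i+1)}(D)$. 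Applied to $f \in C^\infty_{P,[M],b_0}(V \backslash X)$, and invoking Proposition \ref{existence-bv} together with Lemma \ref{x-derivatives}$(i)$, it follows that
$$
\bv(D_t^j f) = \sum_{i=0}^{m-1} R_{j,i}(D_x)\, \bv(P_{(i+1)}(D) f)
$$
exists in $\mathscr{D}'^{[M]}(X)$ and depends continuously on $f$, for every $j = 0, \ldots, m-1$.

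For $l \geq m$ I would proceed by strong induction on $l$. Since $Q_m = 1$, the identity $P(D) f = 0$ rearranges to $D_t^m f = -\sum_{k=0}^{m-1} Q_k(D_x) D_t^k f$, and applying $D_t^{l-m}$ gives
$$
D_t^l f = -\sum_{k=0}^{m-1} Q_k(D_x)\, D_t^{l-m+k} f.
$$
The right-hand side only involves $D_t^r f$ with $r \leq l-1$, so by the inductive hypothesis each summand has a boundary value in $\mathscr{D}'^{[M]}(X)$ depending continuously on $f$, and then Lemma \ref{x-derivatives}$(i)$ (applied to the finite-order constant coefficient operators $Q_k(D_x)$) propagates the existence and continuity to $\bv(D_t^l f)$.

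No real obstacle arises: the argument is purely algebraic once Proposition \ref{existence-bv} is available. The only subtlety worth noting is that all the operators $R_{j,i}(D_x)$ and $Q_k(D_x)$ commute both with $P_{(i+1)}(D)$ (they all have constant coefficients) and with the boundary value mapping, the latter by Lemma \ref{x-derivatives}$(i)$; this is what allows one to interchange $\bv$ with the $x$-derivative operators throughout.
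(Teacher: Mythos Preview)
Your proof is correct and follows essentially the same approach as the paper: first use the recursion \eqref{base-change} together with Lemma \ref{x-derivatives}$(i)$ and Proposition \ref{existence-bv} to handle $0\le l\le m-1$, and then, since $Q_m=1$, use the equation $P(D)f=0$ recursively together with Lemma \ref{x-derivatives}$(i)$ to treat $l\ge m$. The paper compresses this into two sentences, while you spell out the induction and the explicit change-of-basis polynomials $R_{j,i}$, but the content is the same.
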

\begin{proof}
	For $l = 0, \ldots, m-1$ this is a consequence  of  \eqref{base-change}, Lemma \ref{x-derivatives} and Proposition \ref{existence-bv}.  As $Q_m = 1$, the result for $l \geq m$ then follows by using Lemma \ref{x-derivatives} recursively.
\end{proof}

Finally, we discuss the existence of distributional boundary values of zero solutions of $P(D)$. 
\begin{definition}
	Let $X \subseteq \R^d$ be open and let $V \subseteq \R^{d+1}$ be open such that $V \cap \R^d = X$.  The boundary value $\bv(f) \in \mathscr{D}'(X)$ of an element $f \in C^\infty_P(V \backslash X)$ is defined as
	$$
	\langle \bv(f), \varphi \rangle := \lim_{t,s \to 0^+} \int_{X} (f(x,t ) - f(x,-s))  \varphi(x) dx, \qquad \varphi \in \mathscr{D}(X),
	$$
	provided that  $\bv(f)  \in \mathscr{D}'(X)$ exists. 
\end{definition}
Let $V \subseteq \R^{d+1}$ be open such that $V \cap \R^d = X$. For $N \in \N_0$ we define $\mathscr{B}_{P,N}(V \backslash X)$ as the Banach space consisting of all $f \in C^\infty_P(V \backslash X)$ such that
$$
\| f \|_{\mathscr{B}_{P,N}(V \backslash X)} := \sup_{(x,t) \in V \backslash X} |f(x,t)| |t|^N< \infty.
$$
Choose a sequence $(V_l)_{l \in \N_0}$ of relatively compact open subsets of $V$ such that $\overline{V}_l \Subset V_{l+1}$ and $V = \bigcup_{l \in \N_0} V_{l}$. Set $X_l = V_l \cap X$ for $l \in \N_0$. We define
$$
C^\infty_{P,\operatorname{pol}}(V \backslash X) := \varprojlim_{l \in \N_0} \varinjlim_{N \in \N_0} \mathscr{B}_{P,N}(V_{l}\backslash X_l).
$$
We then have:
\begin{proposition} \emph{(cf.\ \cite[Satz 2.4 and Satz 3.3]{Langenbruch1978})}\label{existence-bv-distributions}
	Let $X \subseteq \R^d$ be open and let $V \subseteq \R^{d+1}$ be open such that $V \cap \R^d = X$. For all $l \in \N_0$ the mapping
	$$
	C^\infty_{P,\operatorname{pol}}(V \backslash X) \rightarrow \mathscr{D}'(X), \, f \mapsto \operatorname{bv}(D^l_t f)
	$$
	is well-defined and continuous. 
\end{proposition}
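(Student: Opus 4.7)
The plan is to adapt the proofs of Proposition \ref{existence-bv} and Proposition \ref{existence-bv-1} verbatim, with Proposition \ref{almost-zero-smooth} taking the place of Proposition \ref{almost-zero}. First I would reduce the claim for $\operatorname{bv}(D_t^l f)$ to the claims for $\operatorname{bv}(P_{(j+1)}(D)f)$, $j = 0, \ldots, m-1$. For $0 \leq l \leq m-1$ this reduction is furnished by the identities \eqref{base-change}, which express $D_t^l$ as $P_{(m-l)}(D)$ plus $D_x$-polynomial operators applied to strictly lower $t$-derivatives; since $D_x^\alpha$ commutes with $\operatorname{bv}$ (the obvious distributional analogue of Lemma \ref{x-derivatives}$(i)$), this yields the reduction by induction on $l$. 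For $l \geq m$ one uses $P(D)f = 0$ together with $Q_m = 1$ to write $D_t^m f$ as a combination of the $Q_k(D_x) D_t^k f$ with $k < m$, and iterates.

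Fixing a relatively compact open $Y$ in $X$, $r > 0$, $N_0 \in \N_0$ and $K \Subset Y$, I would then show that $f \mapsto \operatorname{bv}(P_{(j+1)}(D)f)$ is a continuous linear map $\mathscr{B}_{P, N_0}(Y \times (-r, r) \backslash Y) \to (\mathscr{D}_K)'_b$. Given $\varphi \in \mathscr{D}_K$, I would apply Proposition \ref{almost-zero-smooth} to $\check{P}$ with $\varepsilon < r/2$ and a sufficiently large exponent $N$ (to be fixed later), producing $\Phi = \Phi(0, \ldots, \varphi_j = \varphi, \ldots, 0) \in \mathscr{D}_{K \times [-\varepsilon, \varepsilon]}$ with $D_t^k \Phi(\cdot, 0) = \delta_{k, j} \varphi$ for $0 \leq k \leq m-1$ and $|\check{P}(D)\Phi(x, s)| \leq C_1 \|\varphi\|_{l} |s|^N$ for some $l \in \N_0$ and $C_1 > 0$ independent of $\varphi$. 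Applying Lemma \ref{lemma-Green} to the translate $f_t(x, s) := f(x, s+t)$ (for $t > 0$ small) and $\Phi$ on $[0, r/2]$, and using $P(D)f_t = 0$ together with $\Phi(\cdot, r/2) \equiv 0$, yields the Stokes-type identity
$$
\int_Y P_{(j+1)}(D) f(x, t) \varphi(x) dx = (-1)^j i \int_0^{r/2} \int_Y f(x, s+t) \check{P}(D) \Phi(x, s) dx ds;
$$
a symmetric computation on $[-r/2, 0]$ would handle the one-sided limit from below.

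The main obstacle is passing to the limit as $t \to 0^+$ in these identities, which is precisely what dictates the choice of $N$. Since $|f(x, s+t)| \leq \|f\|_{\mathscr{B}_{P, N_0}} s^{-N_0}$ for $s, t > 0$, while $|\check{P}(D)\Phi(x, s)| \leq C_1 \|\varphi\|_l |s|^N$, the product is dominated on $K \times [0, r/2]$ by a constant multiple of $s^{N - N_0}$, which is integrable as soon as $N \geq N_0$. Fixing for example $N := N_0 + 1$, Lebesgue's dominated convergence theorem then delivers
$$
\langle \operatorname{bv}(P_{(j+1)}(D) f), \varphi \rangle = (-1)^j i \int_{-r/2}^{r/2} \int_Y f(x, s) \check{P}(D) \Phi(x, s) dx ds,
$$
and the very same estimate produces the continuity bound $|\langle \operatorname{bv}(P_{(j+1)}(D) f), \varphi \rangle| \leq C_2 \|f\|_{\mathscr{B}_{P, N_0}} \|\varphi\|_l$, which, combined with the first-paragraph reduction, gives the result for every $l \in \N_0$.
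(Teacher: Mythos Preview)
Your proposal is correct and follows exactly the approach the paper itself takes: reduce to the maps $f\mapsto\operatorname{bv}(P_{(j+1)}(D)f)$ via \eqref{base-change} as in Proposition \ref{existence-bv-1}, and then mimic the proof of Proposition \ref{existence-bv} with Proposition \ref{almost-zero-smooth} replacing Proposition \ref{almost-zero} and Remark \ref{remark-continuity}. The only cosmetic difference is that you dispense with the cut-off $\psi$ used in the proof of Proposition \ref{existence-bv}, which is legitimate since the $\Phi$ produced by Proposition \ref{almost-zero-smooth} already has compact support in $t$.
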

\begin{proof}
	It suffices to show that  for all $j = 0, \ldots, m-1$ the mapping
	$$
	C^\infty_{P,\operatorname{pol}}(V \backslash X) \rightarrow \mathscr{D}'(X), \, f \mapsto \operatorname{bv}(P_{(j+1)}(D) f)
	$$
	is well-defined and continuous (cf.\ the proof of Proposition \ref{existence-bv-1}). This can be shown in the same way as Proposition \ref{existence-bv} but by using Proposition \ref{almost-zero-smooth} instead of Proposition \ref{almost-zero} and Remark \ref{remark-continuity}.
\end{proof}

\section{Main results}\label{sec:Main results}
This section is devoted to the two main results of this article. Firstly, we give various characterizations  of  zero solutions of $P(D)$ that admit a boundary value in a given ultradistribution space. Secondly,  we represent the $m$-fold Cartesian product of an ultradistribution space as the quotient of certain spaces  of  zero solutions of $P(D)$.

\begin{definition}\label{def12}
	Let $M$ be a weight sequence satisfying  $(M.4)_{b_0}$ (and $p!^{1/b_0} \prec M$ in the Beurling case). Let $X \subseteq \R^d$ be open and let $V \subseteq \R^{d+1}$ be open such that $V \cap \R^d = X$. We define
	$$
	\operatorname{bv}^m_1: C^\infty_{P,[M],b_0}(V\backslash X) \rightarrow \prod_{j = 0}^{m-1}\mathscr{D}'^{[M]}(X), \, f \mapsto (\operatorname{bv}(P_{(j+1)}(D) f))_{0 \leq j \leq m-1}
	$$
	and
	$$
	\operatorname{bv}^m_2: C^\infty_{P,[M],b_0}(V\backslash X) \rightarrow \prod_{j = 0}^{m-1}\mathscr{D}'^{[M]}(X), \, f \mapsto (\operatorname{bv}(D^j_t f))_{0 \leq j \leq m-1}.
	$$
	The mappings $\operatorname{bv}^m_1$ and $\operatorname{bv}^m_2$ are well-defined and continuous by Proposition \ref{existence-bv} and Proposition \ref{existence-bv-1}, respectively. 
\end{definition}
We start by showing that $\ker \operatorname{bv}^m_n = C^\infty_P(V)$ for $n =1,2$. The proof of the next result is inspired by Komatsu's proof of the Schwarz reflection principle for ultradistributions \cite[Theorem 2.12]{Komatsu1991}.
\begin{proposition}\label{kernel-theorem}
	Let $M$ be a weight sequence satisfying  $(M.2)^*$ and $p!^{1/\gamma_0} \prec M$ ($p!^{1/\gamma_0} \subset M$). Let $X \subseteq \R^d$ be open and let $V \subseteq \R^{d+1}$ be open such that $V \cap \R^d = X$. Let $f \in C^\infty_P(V \backslash X)$ be such that $\operatorname{bv}(D^j_t f) =0$ in $\mathscr{D}'^{[M]}(X)$  for all $j = 0, \ldots, m-1$. Then, $f \in C^\infty_P(V)$. Consequently,  $\ker \operatorname{bv}^m_n = C^\infty_P(V)$ for $n =1,2$.
\end{proposition}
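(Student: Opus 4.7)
The plan is to extend $f$ to an ultradistribution $\tilde{f}\in\mathscr{D}'^{[M]}(V)$ satisfying $P(D)\tilde{f}=0$, and then to invoke the hypoellipticity of $P(D)$ (upgraded to the ultradistribution category via Lemma \ref{parametrix}) to conclude $\tilde{f}\in C^\infty(V)$. Since $\tilde{f}=f$ on $V\backslash X$, this forces $f\in C^\infty_P(V)$. The final identity $\ker\operatorname{bv}^m_n=C^\infty_P(V)$ for $n=1,2$ then follows: the inclusion $\supseteq$ is trivial from \eqref{def-bv}, and by \eqref{base-change} together with Lemma \ref{x-derivatives}, the vanishing of all $\operatorname{bv}(D^j_tf)$ is equivalent to that of all $\operatorname{bv}(P_{(j+1)}(D)f)$, so $\ker\operatorname{bv}^m_1=\ker\operatorname{bv}^m_2$.

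\textbf{A Stokes identity.} By a partition of unity it suffices to argue locally, on a cylinder $V'=Y\times(-r,r)\Subset V$ with $Y\Subset X$. Applying Lemma \ref{lemma-Green} to $f$ on the slabs $Y\times[\varepsilon,r]$ and $Y\times[-r,-\varepsilon]$, using $P(D)f=0$ and the fact that every $\varphi\in\mathscr{D}^{[M]}(V')$ vanishes at $t=\pm r$, yields
\begin{align*}
\int_{|t|\geq\varepsilon} f(x,t)\check{P}(D)\varphi(x,t)\,dx\,dt = &-i\sum_{j=0}^{m-1}(-1)^j\int_Y\bigl[P_{(j+1)}(D)f(x,\varepsilon)D_t^j\varphi(x,\varepsilon) \\
&\quad -P_{(j+1)}(D)f(x,-\varepsilon)D_t^j\varphi(x,-\varepsilon)\bigr]\,dx.
\end{align*}
Passing $\varepsilon\to 0^+$, the right-hand side tends to $-i\sum_j(-1)^j\langle\operatorname{bv}(P_{(j+1)}(D)f),D_t^j\varphi(\cdot,0)\rangle$, which vanishes by hypothesis.

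\textbf{The extension and conclusion.} Let $\check{E}$ be the fundamental solution of $\check{P}(D)$ from Proposition \ref{theo:existence of good fundamental solutions} (applicable because $\check{P}$ is hypoelliptic whenever $P$ is), and let $\chi\in\mathscr{D}(\R^{d+1})$ be a cutoff equal to $1$ near $0$. Setting $F:=\chi\check{E}\in\mathscr{E}'(\R^{d+1})$, a Leibniz-type computation yields $\check{P}(D)F=\delta+R$ with $R\in\mathscr{D}(\R^{d+1})$, since the commutator $[\check{P}(D),\chi]\check{E}$ is supported where $\chi$ has nonzero derivatives, i.e., away from $0$ where $\check{E}$ is smooth. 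For $\psi\in\mathscr{D}^{[M]}(V')$ of sufficiently small support, the convolution $F*\psi$ lies in $\mathscr{D}^{[M]}$ (because $F$ has finite order and $\psi$ is of class $[M]$; here the assumption $p!^{1/\gamma_0}\prec M$, respectively $p!^{1/\gamma_0}\subset M$, is what guarantees compatibility of the regularity of $\check{E}$ with the class $[M]$), and $\check{P}(D)(F*\psi)=\psi+R*\psi$. Substituting $\varphi=F*\psi$ into the key identity produces
\[\lim_{\varepsilon\to 0^+}\int_{|t|\geq\varepsilon} f(x,t)\bigl(\psi(x,t)+(R*\psi)(x,t)\bigr)\,dx\,dt=0,\]
which, after a partition-of-unity patching, defines $\tilde{f}\in\mathscr{D}'^{[M]}(V)$ extending $f$. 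The Stokes calculation applied to $\tilde{f}$ immediately gives $\langle\tilde{f},\check{P}(D)\varphi\rangle=0$ for all $\varphi\in\mathscr{D}^{[M]}(V)$, i.e., $P(D)\tilde{f}=0$ in $\mathscr{D}'^{[M]}(V)$; hypoellipticity of $P(D)$ in the ultradistribution setting (combine Lemma \ref{parametrix} in the $x$-variable with the fundamental solution of Proposition \ref{theo:existence of good fundamental solutions}) then yields $\tilde{f}\in C^\infty(V)$.

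\textbf{Main obstacle.} The most delicate point is making the third paragraph rigorous: showing that the parametrix-based prescription defines a consistent linear functional on $\mathscr{D}^{[M]}(V')$ (independent of the auxiliary cutoff $\chi$), that its dependence on $\psi$ is continuous in the $[M]$-topology, and that the patching across a partition of unity produces a single element of $\mathscr{D}'^{[M]}(V)$ restricting to $f$ on $V\backslash X$. This requires careful bookkeeping of how the precise regularity of $\check{E}$ from Proposition \ref{theo:existence of good fundamental solutions} interacts with the class $[M]$.
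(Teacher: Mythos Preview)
Your high-level strategy---extend $f$ across $X$ as an ultradistribution annihilated by $P(D)$, then invoke hypoellipticity---is reasonable and genuinely different from the paper's proof. The paper never constructs an ultradistributional extension of $f$; instead it works entirely at the level of smooth functions. It first shows that $f\ast_x\varphi\in C^\infty_P(Y\times(-r,r))$ for every $\varphi$ in a suitable $\mathscr{D}^{M,h}_{\overline B(0,s)}$, by checking directly that the one-sided limits $\lim_{t\to 0^\pm}D^\alpha_xD^j_t(f\ast_x\varphi)(x,t)$ exist and coincide (this uses only that $\operatorname{bv}(D^j_tf)=0$ forces the one-sided weak limits of $D^j_tf(\cdot,\pm t)$ to agree). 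Then it applies the $x$-variable parametrix of Lemma~\ref{parametrix}, writing $f=G(D_x)(f\ast_x u_1)-f\ast_x u_2$, and here the hypothesis $p!^{1/\gamma_0}\prec M$ (resp.\ $\subset M$) enters to ensure that the ultradifferential operator $G(D_x)$ maps the smooth zero solution $f\ast_x u_1\in\Gamma^{1/\gamma_0,1/\mu_0}$ back into $C^\infty_P$.

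Your third paragraph, however, contains a genuine gap that is more than bookkeeping. From the Stokes identity you correctly obtain
\[
\lim_{\varepsilon\to 0^+}\int_{|t|\geq\varepsilon} f(x,t)\,\check P(D)\varphi(x,t)\,dx\,dt=0
\]
for every $\varphi\in\mathscr{D}^{[M]}(V')$, and substituting $\varphi=F\ast\psi$ yields $\lim_{\varepsilon\to 0^+}\int_{|t|\geq\varepsilon} f\,(\psi+R\ast\psi)=0$. But this single relation does \emph{not} define $\langle\tilde f,\psi\rangle$: you have no a~priori control on $f$ near $t=0$ (no growth hypothesis is assumed in this proposition), so there is no reason for $\lim_{\varepsilon\to 0^+}\int_{|t|\geq\varepsilon}f\,\psi$ or $\lim_{\varepsilon\to 0^+}\int_{|t|\geq\varepsilon}f\,(R\ast\psi)$ to exist separately. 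The smoothing kernel $R$ is just a fixed element of $\mathscr{D}(\R^{d+1})$, so $R\ast\psi$ is another generic test function---it does not vanish to any special order on $t=0$, and iterating the parametrix does not improve matters. Without an independent argument producing the extension $\tilde f$ (or a bound on $f$ near $X$), your scheme stalls here. Note also that your stated role for the hypothesis $p!^{1/\gamma_0}\prec M$ is off: convolution of $\psi\in\mathscr{D}^{[M]}$ with any $F\in\mathscr{E}'(\R^{d+1})$ already lands in $\mathscr{E}^{[M]}$, so the regularity of $\check E$ is irrelevant at that step; in the paper this hypothesis is used for the quite different purpose of applying $G(D_x)$ to zero solutions of $P$.
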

\begin{proof} 
	It suffices to show that for every relatively compact open subset $Y$ of $X$ there is $r>0$ such that $f \in C^\infty_P(Y \times (-r,r))$. Set $K = \overline{Y}$ and write $K_\varepsilon = K + \overline{B}(0,\varepsilon)$ for $\varepsilon > 0$. Choose $r,s  > 0$ such that $K_{2s} \times [-r,r] \Subset V$. We claim that there is $h > 0$ ($h \in \mathfrak{R}$) such that for all $\varphi \in \mathscr{D}^{M,h}_{\overline{B}(0,s)}$ 
	$$
	f \ast_x \varphi(x,t) =  \int_{\R^d} f(x-y, t) \varphi(y) dy, \qquad (x,t) \in Y \times (-r,r) \backslash Y,
	$$
	belongs to $C^\infty_P(Y \times (-r,r))$. Before we prove the claim, let us show how it entails the result. By Lemma \ref{parametrix}, there exist an ultradifferential operator $G(D_x)$ of class $[M]$, $u_1 \in  \mathscr{D}^{M,h}_{\overline{B}(0,s)}$ and $u_2 \in  \mathscr{D}^{[M]}_{\overline{B}(0,s)}$ such that $G(D_x) u_1 = \delta + u_2$ in $\mathscr{D}'^{[M]}(\R^d)$. The claim yields that $f \ast_x u_n \in C^\infty_P(Y \times (-r,r))$ for $n = 1,2$. Since $p!^{1/\gamma_0} \prec M$ ($p!^{1/\gamma_0} \subset M$), we have that  $G(D_x)(f \ast_x u_1) \in C^\infty_P(Y \times (-r,r))$ (cf.\ the proof of Lemma \ref{x-derivatives}$(ii)$). Hence,
	$$
	f = G(D_x) (f \ast_x u_1) - f \ast_x u_2 \in C^\infty_P(Y \times (-r,r)).
	$$
	We now show the claim. Let $H$ be the constant occurring in $(M.2)$. Our assumption yields that there are $T_j \in  \mathscr{D}'^{[M]}(X)$, $j = 0, \ldots, m-1$, such that
	\begin{equation}
		\lim_{t \to 0^+} D^j_t f( \, \cdot \,, t) = \lim_{t \to 0^+} D^j_t f( \, \cdot \,, -t) = T_j
		\label{limit}
	\end{equation}
	in the weak-$\ast$ topology on $(\mathscr{D}^{[M]}_{K_{2s}})'$. Since $\mathscr{D}^{[M]}_{K_{2s}}$ is an (FS)-space ((DFS)-space) and the set $\{ D^j_t f( \, \cdot \,, t) \, | \, 0 < |t| < r, j = 0, \ldots, m-1 \}$ is equicontinuous in $(\mathscr{D}^{[M]}_{K_{2s}})'$, we obtain that there is $h' > 0$ ($h' \in \mathfrak{R}$) such that  \eqref{limit} holds in the topology of uniform convergence on the unit ball of $\| \, \cdot \|_{\mathscr{E}^{M,h'}(K_{2s})}$ (in the Roumieu case we used Lemma \ref{projective-description}$(ii)$). Furthermore, by Lemma \ref{R-M2}, we may assume without loss of generality that the weight sequence $(\prod_{l=0}^p h'_l  M_p)_{p \in \N_0}$ satisfies $(M.2)$ (with  $2H$ instead of $H$). We obtain that there is a net $(c_t)_{0 < t < r}$ of positive numbers tending to zero such that for all $\psi \in \mathscr{D}^{[M]}_{K_{2s}}$ and $j = 0, \ldots, m-1$
	\begin{equation}
		\left | \int_{\R^d} D^j_t f(x,t) \psi(x) dx - \langle T_j, \psi \rangle \right | \leq c_{{|t|}} \| \psi \|_{\mathscr{E}^{M,h'}(K_{2s})}, \quad 0 < |t| < r.
		\label{convergence}
	\end{equation}
	By the Hahn-Banach theorem, we may extend  $T_j$, $j = 0, \ldots, m-1$, to a continuous linear functional on $\mathscr{D}^{M,h'}_{K_{2s}}$ (which we still denote by $T_j$). We now show that \eqref{convergence} holds for all $\psi \in \mathscr{D}^{M,h'/H}_{K_s}$ $( \psi \in \mathscr{D}^{M,h'/(2H)}_{K_s})$. Choose $\chi \in \mathscr{D}^{[M]}_{\overline{B}(0,1)}$ with $\int_{\R^d} \chi(x) dx = 1$ and set $\chi_\varepsilon(x)  = \varepsilon^{-d} \chi(x/\varepsilon)$ for $\varepsilon > 0$.  Since $\lim_{\varepsilon \to 0^+} \psi \ast \chi_\varepsilon  = \psi$ in $ \mathscr{D}^{M,h'}_{K_{2s}}$, \eqref{convergence} implies that
	\begin{align*}
		&\left | \int_{\R^d} D^j_t f(x,t) \psi(x) dx - \langle T_j, \psi \rangle \right |\\
		=& \lim_{\varepsilon \to 0^+} \left | \int_{\R^d} D^j_t f(x,t) \psi \ast \chi_\varepsilon (x) dx - \langle T_j, \psi \ast \chi_\varepsilon \rangle \right | \\
		\leq& c_{{|t|}} \lim_{\varepsilon \to 0^+} \| \psi \ast \chi_\varepsilon \|_{\mathscr{E}^{M,h'}(K_{2s})} = c_{{|t|}} \| \psi \|_{\mathscr{E}^{M,h'}(K_{2s})}.
	\end{align*}
	Set $h = h'/H^2$ ($h = h'/(2H)^2$). We are ready to prove that $f \ast_x \varphi \in C^\infty_P(Y \times (-r,r))$ for all $\varphi \in \mathscr{D}^{M,h}_{\overline{B}(0,s)}$. It is clear that $f \ast_x \varphi \in C^\infty_P(Y \times (-r,r)\backslash Y)$. Let $\alpha \in \N_0^d$ be arbitrary. Since the set $\{ D^\alpha_x\varphi(x - \, \cdot \,) \, | \, x \in Y \}$ is bounded in $\mathscr{D}^{M,h'/H}_{K_s}$ $(\mathscr{D}^{M,h'/(2H)}_{K_s})$, we infer from \eqref{convergence} that for $j = 0, \ldots, m-1$
	$$
	\lim_{t \to 0} D^\alpha_xD^j_t (f \ast_x \varphi) (x,t) = \lim_{t \to 0} \int_{\R^d} D^j_t f(y,t) D^\alpha_x\varphi(x-y) dy =  T_j \ast D^\alpha_x\varphi(x)
	$$
	uniformly for $x \in Y$. Hence, $f \ast_x \varphi$ is a function on $Y \times (-r,r)$ that is $C^\infty$ in the $x$-direction and $C^{m-1}$ in the $t$-direction. 
	Since $f \ast_x \varphi \in C^\infty_P(Y \times (-r,r)\backslash Y)$ and $Q_m = 1$, we obtain that $f \ast_x \varphi \in C^\infty_P(Y \times (-r,r))$.
\end{proof}

Proposition \ref{kernel-theorem} implies the next result; it can also be shown directly and easier by using the Schwartz parametrix method instead of Lemma \ref{parametrix}.

\begin{proposition}\label{kernel-theorem-dist} \emph{(cf.\ \cite[Satz 1.5]{Langenbruch1978})}
	Let $X \subseteq \R^d$ be open and let $V \subseteq \R^{d+1}$ be open such that $V \cap \R^d = X$. Let $f \in C^\infty_P(V \backslash X)$ be such that $\operatorname{bv}(D^j_t f) =0$ in $\mathscr{D}'(X)$  for all $j = 0, \ldots, m-1$. Then, $f \in C^\infty_P(V)$. 
\end{proposition}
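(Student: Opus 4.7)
The plan is to extend $f$ across $X$ to a distribution $\tilde f$, verify that $P(D)\tilde f = 0$, and then invoke the hypoellipticity of $P(D)$; this is the classical Schwartz parametrix line of argument, and it bypasses the ultradifferential parametrix of Lemma \ref{parametrix} that was needed for Proposition \ref{kernel-theorem}.

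It suffices to show that for every relatively compact open subset $Y$ of $X$ there exists $r > 0$ with $f \in C^\infty_P(Y \times (-r, r))$. Fix such a $Y$, set $K = \overline{Y}$, and choose $r, s > 0$ with $K_{2s} := K + \overline{B}(0, 2s)$ satisfying $K_{2s} \times [-r, r] \Subset V$. The recursion $D_t^m f = -\sum_{k=0}^{m-1} Q_k(D_x) D_t^k f$, valid because $Q_m = 1$, together with Lemma \ref{x-derivatives}, propagates the hypothesis to $\operatorname{bv}(D_t^l f) = 0$ in $\mathscr{D}'(X)$ for every $l \in \N_0$. A Cauchy-type reformulation of the definition of $\operatorname{bv}$ shows that the one-sided limits $\lim_{t \to 0^\pm}\langle D_t^l f(\cdot, t), \psi\rangle$ exist for every $\psi \in \mathscr{D}(X)$, so by the Banach--Steinhaus theorem applied to the Fr\'echet space $\mathscr{D}_{K_{2s}}$ there exist $N \in \N$ and $C > 0$ with
\[
|\langle D_t^l f(\cdot, t), \psi\rangle| \leq C\|\psi\|_N, \qquad \psi \in \mathscr{D}_{K_{2s}},\ 0 < |t| < r,\ 0 \leq l \leq m.
\]

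Next, I would define $\tilde f \in \mathscr{D}'(Y \times (-r, r))$ by
\[
\langle \tilde f, \varphi \rangle := \int_{-r}^{r} \langle f(\cdot, t), \varphi(\cdot, t)\rangle\, dt, \qquad \varphi \in \mathscr{D}(Y \times (-r, r)).
\]
The uniform bound above makes the integrand essentially bounded on $(-r, r)$ and the map $\varphi \mapsto \langle \tilde f, \varphi\rangle$ continuous; moreover, for $\varphi$ supported in $Y \times (-r, r) \setminus Y$ the pairing reduces to the classical integral $\iint f\, \varphi\, dx\, dt$, so $\tilde f$ restricts to $f$ on $Y \times (-r, r) \setminus Y$. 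To verify $P(D)\tilde f = 0$, apply Lemma \ref{lemma-Green} to $f$ and $\Phi = \varphi$ on each of the intervals $[\epsilon, r]$ and $[-r, -\epsilon]$: the interior terms vanish since $P(D) f = 0$ on $V \setminus X$, and the terms at $t = \pm r$ vanish since $\varphi(\cdot, \pm r) = 0$. Adding the two identities and letting $\epsilon \to 0^+$ yields
\[
\langle \tilde f, \check P(D)\varphi \rangle = -i \sum_{j=0}^{m-1} (-1)^j \langle \operatorname{bv}(P_{(j+1)}(D) f), D_t^j \varphi(\cdot, 0)\rangle = 0,
\]
the last equality because $P_{(j+1)}(D) = \sum_{k \geq j+1} Q_k(D_x) D_t^{k-j-1}$ and, by Lemma \ref{x-derivatives} together with the vanishing of all $\operatorname{bv}(D_t^l f)$, each $\operatorname{bv}(P_{(j+1)}(D) f) = 0$. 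Hence $P(D)\tilde f = 0$ in $\mathscr{D}'(Y \times (-r, r))$, and hypoellipticity of $P(D)$ gives $\tilde f \in C^\infty(Y \times (-r, r))$. Since $\tilde f$ agrees with $f$ on $Y \times (-r, r) \setminus Y$, this produces the desired smooth extension $f \in C^\infty_P(Y \times (-r, r))$.

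The main delicate step will be the limit $\epsilon \to 0^+$ in the boundary contributions from Lemma \ref{lemma-Green}: one must simultaneously move the distribution $P_{(j+1)}(D) f(\cdot, \pm \epsilon)$ to its boundary value and the test function $D_t^j \varphi(\cdot, \pm \epsilon)$ to $D_t^j \varphi(\cdot, 0)$. This is handled by combining the uniform $\|\cdot\|_N$-equicontinuity above with the convergence $D_t^j \varphi(\cdot, \pm \epsilon) \to D_t^j \varphi(\cdot, 0)$ in $\mathscr{D}_{K_{2s}}$, so that the combined limit matches precisely the double-indexed limit in the definition of $\operatorname{bv}(P_{(j+1)}(D) f)$.
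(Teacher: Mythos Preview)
Your argument is correct, but it takes a genuinely different route from what the paper does. The paper offers two options: either invoke Proposition~\ref{kernel-theorem} directly (any weight sequence $M$ with $p!^{1/\gamma_0}\prec M$ does the job, since $\mathscr{D}(X)\subset\mathscr{D}^{(M)}(X)$), or rerun the proof of Proposition~\ref{kernel-theorem} with the classical Schwartz parametrix $\Delta_x^N u_1=\delta+u_2$ in place of Lemma~\ref{parametrix}. Both of these go through the convolution regularization $f\ast_x\varphi$, show it extends to $C^\infty_P(Y\times(-r,r))$, and then recover $f$ via the parametrix identity.

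You instead bypass any parametrix: you build the distributional extension $\tilde f$ directly from the slice integrals, feed Lemma~\ref{lemma-Green} a general $\varphi\in\mathscr{D}(Y\times(-r,r))$ to see $P(D)\tilde f=0$, and appeal to hypoellipticity of $P(D)$ itself. Despite your opening sentence, this is not really a parametrix argument --- no approximate inverse appears anywhere --- and that is precisely its virtue: it is self-contained and uses only the Stokes identity already developed in the paper. The paper's route, on the other hand, keeps the distribution and ultradistribution cases structurally parallel and avoids reproving that the boundary terms in Lemma~\ref{lemma-Green} vanish in the limit (which you correctly flag as the one delicate point and handle via equicontinuity). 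Both approaches are short; yours is perhaps the more conceptual one.
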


Next, we show that $\prod_{j=0}^{m-1}\mathscr{E}'^{[M]}(X)$ is contained in the range of ${\bv}^m_n$ for $n= 1,2$ (provided that $a_0 = b_0$). It suffices to consider the case $X = \R^d$.

\begin{proposition}\label{local-surj} Let $M$ be a weight sequence satisfying $(M.4)_{a_0}$ and $p!^{1/\gamma_0} \prec M$ ($p!^{1/\gamma_0} \subset M$). Let $E$ be the fundamental solution of $P(D)$ constructed in Proposition \ref{theo:existence of good fundamental solutions}. For $T_0, \ldots, T_{m-1} \in  \mathscr{E}'^{[M]}(\R^d)$ we define
	\begin{align*}
		f(x,t) &= f(T_0, \ldots, T_{m-1})(x,t) \\
		&= - i \sum_{j = 0}^{m-1} \langle T_j (y), D^j_t E(x-y,t) \rangle, \qquad (x,t) \in \R^{d+1} \backslash \R^d.
	\end{align*}
	Then, $f \in  C^\infty_{P,[M],a_0}(\R^{d+1} \backslash \R^d)$ and $\operatorname{bv}^m_1(f) = (T_j)_{0 \leq j \leq m-1}$.  Consequently, for $n = 1,2$ the following statement holds: For all $T_0, \ldots, T_{m-1} \in  \mathscr{E}'^{[M]}(\R^d)$ there is $g_n \in C^\infty_{P,[M],a_0}(\R^{d+1} \backslash \R^d)$ such that  $\operatorname{bv}^m_n(g_n) = (T_j)_{0 \leq j \leq m-1}$.
\end{proposition}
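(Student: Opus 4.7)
The argument proceeds in four steps. First, I would verify that $f$ is smooth on $\R^{d+1}\backslash\R^d$ and satisfies $P(D)f=0$ there. Smoothness follows because, for $t\neq 0$ and $y$ ranging over $\operatorname{supp} T_j$, the map $y\mapsto D_t^j E(x-y,t)$ lies in $\mathscr E^{[M]}$ uniformly in $x$ on compact sets (by the derivative estimate of Proposition \ref{theo:existence of good fundamental solutions}), so $(x,t)\mapsto\langle T_j(y), D_t^j E(x-y,t)\rangle$ is $C^\infty$ via standard differentiation-under-the-pairing; then $P(D)f=-i\sum_j\langle T_j(y), D_t^j\delta(x-y,t)\rangle=0$ on $\R^{d+1}\backslash\R^d$ since $\delta$ is supported at the origin. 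For the growth estimate $f\in C^\infty_{P,[M],a_0}$, I would combine the seminorm bound $|\langle T_j,\psi\rangle|\leq C_j\|\psi\|_{\mathscr{E}^{M,h_j}(K_j)}$ with Proposition \ref{theo:existence of good fundamental solutions}; the key identity $\sup_\alpha (L/h_j)^{|\alpha|}|\alpha|!^{1/a_0}/M_{|\alpha|}\cdot|t|^{-|\alpha|/a_0}=e^{(1/a_0)\omega_{M^{a_0,*}}((L/h_j)^{a_0}/|t|)}$, iterated use of $(M.2)$ for $M^{a_0,*}$ (which inherits $(M.2)$ from $M$), and absorption of polynomial prefactors via $\log(1/|t|)=o(\omega_{M^{a_0,*}}(1/|t|))$ yield $|f(x,t)|\leq Ce^{\omega_{M^{a_0,*}}(c/|t|)}$, with $c$ chosen suitably in each case (in the Roumieu case the freedom to enlarge $h_j$ makes $c$ arbitrarily small). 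When $p!^{1/a_0}\asymp M$ the space $C^\infty_{P,[M],a_0}$ carries no growth condition (Remark \ref{associated-1}) and only smoothness is needed.

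To prove $\operatorname{bv}(P_{(n+1)}(D)f)=T_n$ for $n=0,\ldots,m-1$, fix $\varphi\in\mathscr D^{[M]}(\R^d)$ and unpack the defining limit. Substituting the formula for $f$, interchanging the $T_j$-pairing with the $x$-integration (justified since the kernel is jointly smooth in $(x,y)$ for $t,s\neq 0$ and the relevant supports are compact), and applying a second Fubini to swap the $T_j$- and $x$-pairings yields
\[
\langle\operatorname{bv}(P_{(n+1)}(D)f),\varphi\rangle=-i\sum_{j=0}^{m-1}\lim_{t,s\to 0^+}\langle U_{t,s}^{n,j},\Psi_j\rangle,
\]
where $U_{t,s}^{n,j}(x)=P_{(n+1)}(D)D_t^j E(x,t)-P_{(n+1)}(D)D_t^j E(x,-s)$ and $\Psi_j(x)=\langle T_j(y),\varphi(x+y)\rangle$ lies in $\mathscr D^{[M]}(\R^d)$ as a compactly supported ultradifferentiable convolution.

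Each $D_t^j E$ has polynomial growth near $\R^d$ by Proposition \ref{theo:existence of good fundamental solutions} and satisfies $P(D)D_t^j E = D_t^j\delta = 0$ on $\R^{d+1}\backslash\R^d$, hence lies trivially in $C^\infty_{P,[M],b_0}(\R^{d+1}\backslash\R^d)$; Proposition \ref{existence-bv} therefore gives that $\operatorname{bv}(P_{(n+1)}(D)D_t^j E)\in\mathscr D'^{[M]}(\R^d)$ exists and that $\lim_{t,s\to 0^+}\langle U_{t,s}^{n,j},\Psi_j\rangle=\langle\operatorname{bv}(P_{(n+1)}(D)D_t^j E),\Psi_j\rangle$. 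To identify this boundary value, apply Lemma \ref{lemma-Green} to $D_t^j E$ on the slabs $\R^d\times[\epsilon,R]$ and $\R^d\times[-R,-\epsilon]$, take $\epsilon\to 0^+$ and $R\to\infty$, and use the identity $\langle D_t^j E,\check P(D)\Phi\rangle=(-1)^j D_t^j\Phi(0,0)$ (an immediate consequence of $P(D)E=\delta$). Testing against $\Phi\in\mathscr D(\R^{d+1})$ with prescribed Taylor data $D_t^k\Phi(\cdot,0)=\delta_{k,n}\varphi$ (existence by Borel's theorem) yields $\operatorname{bv}(P_{(n+1)}(D)D_t^j E)=i\delta_{n,j}\delta$ in $\mathscr D'^{[M]}(\R^d)$. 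Substituting this back and using $\Psi_j(0)=\langle T_j,\varphi\rangle$ gives $\langle\operatorname{bv}(P_{(n+1)}(D)f),\varphi\rangle=-i\sum_j i\delta_{n,j}\langle T_j,\varphi\rangle=\langle T_n,\varphi\rangle$, as required.

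For the consequence, the $n=1$ case is immediate with $g_1=f$. For $n=2$, the recursion \eqref{base-change} read as an identity of differential operators via $t\mapsto D_t$ expresses each $D_t^j$ as a finite $\C[D_x]$-linear combination of $P_{(1)}(D),\ldots,P_{(m)}(D)$; the resulting transition matrix $A$ is lower anti-triangular with $1$'s on the anti-diagonal, hence invertible over $\C[D_x]$. Given $(T_j)_j$, set $(\widetilde T_l)_l=A^{-1}(T_j)_j\in\prod_l\mathscr E'^{[M]}(\R^d)$ and take $g_2=f(\widetilde T_0,\ldots,\widetilde T_{m-1})$; combining the already-proved identity $\operatorname{bv}^m_1(g_2)=(\widetilde T_l)_l$ with Lemma \ref{x-derivatives}(i) (to move the $\C[D_x]$-coefficients through $\operatorname{bv}$) gives $\operatorname{bv}^m_2(g_2)=A\cdot(\widetilde T_l)_l=(T_j)_j$. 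I expect the main technical obstacle to be the careful justification of the two Fubini exchanges in the second step and the passage $\epsilon\to 0^+$, $R\to\infty$ in the Stokes-type identification of $\operatorname{bv}(P_{(n+1)}(D)D_t^j E)$, together with the bookkeeping of Beurling versus Roumieu conventions.
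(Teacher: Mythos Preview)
Your argument for the growth estimate $f\in C^\infty_{P,[M],a_0}(\R^{d+1}\backslash\R^d)$ matches the paper's proof essentially line for line, and your handling of the consequence for $\operatorname{bv}^m_2$ via the triangular change of basis \eqref{base-change} is also the paper's route.

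For the boundary-value identity $\operatorname{bv}^m_1(f)=(T_j)_j$ you take a genuinely different path. The paper first proves the distributional analogue (Proposition~\ref{local-surj-distributions}) and then reduces the ultradistributional case to it by the parametrix Lemma~\ref{parametrix}: writing $T_j=G(D_x)T_{1,j}+T_{2,j}$ with $T_{q,j}\in C_c(\R^d)$ and invoking Lemma~\ref{x-derivatives}$(ii)$. You instead pull $T_j$ outside by two Fubini exchanges and reduce to the single computation $\operatorname{bv}(P_{(n+1)}(D)D_t^jE)=i\delta_{n,j}\delta$. Your route is more direct and avoids the parametrix machinery; the paper's route is more modular and separates the distributional core cleanly. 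Both ultimately rest on the same Stokes identity (Lemma~\ref{lemma-Green}) applied to a test function $\Phi$ with prescribed transversal Taylor data.

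There is, however, a real gap in your identification step. Invoking Borel's theorem to produce $\Phi$ with $D_t^k\Phi(\cdot,0)=\delta_{k,n}\psi$ is not enough: for a generic Borel extension, $\check P(D)\Phi$ has no decay as $t\to 0$, while $D_t^jE(x,t)$ blows up like $|t|^{-S}$. Hence neither the passage $\epsilon\to 0^+$ in your slab computation nor the identification of the improper integral $\int\!\!\int D_t^jE\cdot\check P(D)\Phi$ with the distributional pairing $\langle D_t^jE,\check P(D)\Phi\rangle$ is justified. What is needed is precisely Proposition~\ref{almost-zero-smooth}, which produces $\Phi$ with the same Taylor data \emph{and} $|\check P(D)\Phi(x,t)|\le C|t|^N$ for any prescribed $N$; together with Lemma~\ref{lemma:cons-taylor} this closes the argument. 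Once you replace ``Borel's theorem'' by Proposition~\ref{almost-zero-smooth} and cite Lemma~\ref{lemma:cons-taylor}, your proof goes through.
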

We first prove the following distributional variant of Proposition \ref{local-surj}.
\begin{proposition}\label{local-surj-distributions} Let $T_0, \ldots, T_{m-1} \in \mathscr{E}'(\R^d)$. Then, $f = f(T_0, \ldots, T_{m-1}) \in  C^\infty_{P,\operatorname{pol}}(\R^{d+1} \backslash \R^d)$ and $\operatorname{bv}(P_{(j+1)}(D) f) = T_j$ in $\mathscr{D}'(\R^d)$ for $j = 0, \ldots, m-1$.
\end{proposition}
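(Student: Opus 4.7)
The plan is to proceed in three stages, paralleling the methods of Propositions \ref{existence-bv} and \ref{existence-bv-distributions}. First, I would verify that $f \in C^\infty_{P,\operatorname{pol}}(\R^{d+1}\backslash \R^d)$: since each $T_k$ is compactly supported and $E$ is smooth on $\R^{d+1}\backslash\{0\}$, the pairing $(x,t)\mapsto \langle T_k(y), D^k_t E(x-y, t)\rangle$ is smooth on $\R^{d+1}\backslash \R^d$; differentiating under the pairing and invoking $P(D) E = \delta$ then yields $P(D) f = 0$ there. The polynomial growth in $|t|^{-1}$ follows from the finite order of each $T_k$ combined with estimate \eqref{eq:regularity}: one bounds $|D^\beta_x D^\ell_t f(x,t)|$ on compacts by a finite sum of suprema of $|D^\alpha_y D^\beta_x D^\ell_t E(x-y,t)|$ with $|\alpha|$ bounded and $y \in \supp T_k$, and \eqref{eq:regularity} supplies the required polynomial bound.

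Next, since Proposition \ref{existence-bv-distributions} guarantees that $\operatorname{bv}(P_{(j+1)}(D) f) \in \mathscr{D}'(\R^d)$ exists, I would identify it by fixing $\varphi \in \mathscr{D}(\R^d)$ and applying Proposition \ref{almost-zero-smooth} to $\check P$ with Cauchy data $(0,\ldots,\varphi_j=\varphi,\ldots,0)$ and $N$ chosen sufficiently large, producing $\Phi \in \mathscr{D}(\R^{d+1})$ with $D^{j'}_t \Phi(\,\cdot\,,0) = \delta_{j',j}\,\varphi$ and $|\check P(D)\Phi(x,s)|\le C|s|^N$. Fixing a cutoff $\psi \in \mathscr{D}(\R)$ equal to $1$ near $0$ and reproducing verbatim the argument of the proof of Proposition \ref{existence-bv} (Lemma \ref{lemma-Green} applied to $f$ on $[\epsilon, r]$ and $[-r, -\epsilon]$, followed by $\epsilon \to 0^+$, with dominated convergence justified by the $|s|^N$-vanishing of $\check P(D)\Phi$ offsetting the polynomial growth of $f$) gives
\begin{equation*}
\langle \operatorname{bv}(P_{(j+1)}(D) f),\varphi \rangle = (-1)^j i \int_{-r}^{r}\!\!\int_{\R^d} f(x,s)\,\check P(D)\bigl(\psi(s)\Phi(x,s)\bigr)\,dxds.
\end{equation*}

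Finally, I would collapse this identity to $\langle T_j, \varphi\rangle$ by exploiting $P(D) E = \delta$. Substituting the formula for $f$, interchanging the absolutely convergent $(x,s)$-integral with the $T_k$-pairing in $y$ (justified by the compact support of $T_k$ and the joint integrability of the integrand), and changing variables $z=x-y$, the right-hand side becomes $(-1)^j \sum_{k=0}^{m-1} \langle T_k(y), I_k(y)\rangle$ where
\begin{equation*}
I_k(y) = \int_{-r}^{r}\!\!\int_{\R^d} D^k_t E(z,s)\,\check P(D)\bigl(\psi(s)\Phi(z+y,s)\bigr)\,dzds.
\end{equation*}
Choosing $N$ larger than both the polynomial order of the singularity of $D^k_t E$ near $\R^d$ (controlled by \eqref{improved-regularity}) and the local distributional order of $D^k_t E$, Lemma \ref{lemma:cons-taylor} identifies $I_k(y)$ with the distributional pairing $\langle D^k_t E, \check P(D)(\psi\Phi(\,\cdot\,+y,\,\cdot\,)) \rangle$. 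The adjoint identity $\langle D^k_t E, \check P(D)\Psi\rangle = (-1)^k\langle P(D)E, D^k_t\Psi\rangle = (-1)^k \langle \delta, D^k_t\Psi\rangle = (-1)^k D^k_t \Psi(0,0)$, combined with $\psi \equiv 1$ near $0$, then collapses $I_k(y)$ to $(-1)^k D^k_t \Phi(y,0) = (-1)^k \delta_{k,j}\,\varphi(y)$, giving $\langle \operatorname{bv}(P_{(j+1)}(D) f), \varphi\rangle = (-1)^{2j} \langle T_j, \varphi\rangle = \langle T_j, \varphi\rangle$.

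The main obstacle will be the application of Lemma \ref{lemma:cons-taylor} in this last step: it requires a careful matching of the vanishing order of the test function $\check P(D)(\psi\Phi(\,\cdot\,+y,\,\cdot\,))$ on $\R^d$ with the singular behaviour of $E$ near the origin, so the decay exponent $N$ must be chosen uniformly for all $k = 0,\ldots,m-1$; this is where the flexibility built into Proposition \ref{almost-zero-smooth} is essential.
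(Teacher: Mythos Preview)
Your proof is correct and follows the same strategy as the paper's: Proposition \ref{almost-zero-smooth} supplies $\Phi$, Lemma \ref{lemma-Green} yields the integral representation of the boundary value, and Lemma \ref{lemma:cons-taylor} converts that integral into a distributional pairing that collapses to $\langle T_j,\varphi\rangle$. The one difference is organisational: the paper introduces the single distribution $T = \bigl(-i\sum_{k} T_k\otimes D^k_t\delta\bigr)\ast E \in \mathscr{D}'(\R^{d+1})$, notes that $T_{|\R^{d+1}\backslash\R^d} = f$, and applies Lemma \ref{lemma:cons-taylor} once to the pair $(T,\check P(D)\Phi)$, thereby bypassing your interchange of the $(x,s)$-integral with the $T_k$-pairing and the separate application of Lemma \ref{lemma:cons-taylor} to each $D^k_t E$.
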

\begin{proof}
	It is clear that $f \in C^\infty_P(\R^{d+1} \backslash \R^d)$. The bound \eqref{eq:regularity} implies that there is $N \in \N_0$ such that for all $r > 0$
	$$
	\sup_{(x,t)\in\R^d \times [-r,r] \backslash \R^d} |f(x,t)| |t|^N < \infty.
	$$
	In particular, $f \in  C^\infty_{P,\operatorname{pol}}(\R^{d+1} \backslash \R^d)$. We now show that $\operatorname{bv}(P_{(j+1)}(D) f) = T_j$ in $\mathscr{D}'(\R^d)$ for $j = 0, \ldots, m-1$. We define
	$$
	T = \left ( - i \sum_{j = 0}^{m-1} T_j \otimes D^j_t \delta \right) \ast E  \in \mathscr{D}'(\R^{d+1}).
	$$
	Note that $T_{| \R^{d+1} \backslash \R^d} = f$. Let $0 \leq j \leq m-1$ and $\varphi \in \mathscr{D}(\R^d)$ be arbitrary. Set $K = \supp \varphi$. Choose $l \in \N$ and $C > 0$ such that
	$$
	|\langle T, \psi \rangle|  \leq C \| \psi \|_l, \qquad \psi \in \mathscr{D}_{K \times [-1,1]}.
	$$
	By Proposition \ref{almost-zero-smooth} (with $\check P$ instead of $P$), there exists $\Phi \in \mathscr{D}_{K\times[-1,1]}$  such that $D^l_t \Phi(\, \cdot \,, 0) = \delta_{l,j} \varphi$ for $l = 0, \ldots, m-1$ (where $\delta$ denotes the Kronecker delta) and
	$$
	\sup_{(x,t) \in \R^{d+1}  \backslash \R^d} \frac{| \check P(D) \Phi(x,t)|}{|t|^{\max\{N,l+1\}}} < \infty. 
	$$
	By employing Lemma \ref{lemma-Green} in the same way as in the proof of Proposition \ref{existence-bv}, we obtain that 
	$$
	\langle \operatorname{bv}(P_{(j+1)}(D) f), \varphi \rangle = (-1)^ji \int_\R\int_{\R^d} f(x,s) \check{P}(D)\Phi(x,s) dxds. 
	$$
	Lemma \ref{lemma:cons-taylor} implies that 
	\begin{align*}
		\int_\R\int_{\R^d} f(x,s) \check{P}(D)\Phi(x,s) dxds &= \left \langle   \left(- i \sum_{k = 0}^{m-1} {T_{k}} \otimes D^k_t \delta \right) \ast E, \check{P}(D) \Phi \right \rangle \\
		&=  - i \sum_{k = 0}^{m-1} \langle T_{k} \otimes D^k_t \delta , \Phi \rangle \\
		& = -i (-1)^j \langle T_{j},\varphi \rangle,
	\end{align*}
	which yields the result.  
\end{proof}
\begin{proof}[Proof of Proposition \ref{local-surj}]
	Since $\gamma_0 \leq a_0$, we have that $p!^{1/a_0} \prec M$ in the Beurling case. The bound  \eqref{eq:regularity} implies that for all $l \in \N_0$ it holds that $D^l_tE(\, \cdot \,, t) \in  \mathscr{E}^{\{p!^{1/a_0}\}}(\R^d) \subset \mathscr{E}^{[M]}(\R^d)$ for $t \in \R \backslash \{0\}$ fixed.  Hence, the dual pairing in the definition of $f$ makes sense. It is clear that $f \in C^\infty_P(\R^{d+1} \backslash \R^d)$.
	We now show that $f \in  C^\infty_{P,[M],a_0}(\R^{d+1} \backslash \R^d)$. By Lemma \ref{lemma-M4}$(ii)$, we have that  either $p!^{1/a_0} \prec M$ or  $p!^{1/a_0} \asymp M$.  If $p!^{1/a_0} \asymp M$, by the definition of $C^\infty_{P,[M],a_0}\left(\R^{d+1}\backslash\R^d\right)$, there is nothing left to show. Therefore, we assume that $p!^{1/a_0} \prec M$. There is $K \Subset \R^d$ such that for some $h > 0$ (for all $h > 0$) there is $C > 0$ such that for all $j = 0, \ldots, m-1$
	$$
	|\langle T_j, \varphi \rangle | \leq C \| \varphi \|_{\mathscr{E}^{M,h}(K)}, \qquad \varphi \in \mathscr{E}^{[M]}(\R^d).
	$$
	The bound  \eqref{eq:regularity} yields that there are $C_1, L_1, S > 0$ such that for all $\alpha \in \N_0^d$ and $j = 0, \ldots, m -1$
	$$
	|D^\alpha_x D^j_t E(x,t) | \leq \frac{C_1L_1^{|\alpha|} |\alpha|!^{1/a_0} }{|t|^{|\alpha|/a_0 + S}}, \qquad x \in \R^d, 0 < |t| \leq 1.
	$$
	Furthermore, $(M.2)$ implies that there are $C_2, L_2 > 0$ such that 
	$$
	\frac{2}{a_0} \omega_{M^{a_0,*}}(t) \leq \omega_{M^{a_0,*}}(L_2t) + \log C_2, \qquad t \geq 0. 
	$$
	For all $x \in \R^d$ and $t \in \R \backslash \{ 0\}$ with $|t|$ small enough  it holds that 
	\begin{align*}
		&|f(x,t)| \leq C\sum_{j = 0}^{m-1} \| D^j_tE(x- \, \cdot \, ,t) \|_{\mathscr{E}^{M,h}(K)} \leq \frac{mCC_1}{|t|^{S}} \sup_{\alpha \in \N_0^d} \frac{L_1^{|\alpha|} |\alpha|!^{1/a_0}}{h^{|\alpha|}M_\alpha|t|^{|\alpha|/a_0}} \\
		& \leq \frac{mCC_1}{|t|^{S}} \sup_{\alpha \in \N_0^d} \left (\frac{(L_1/h)^{a_0|\alpha|}}{M^{a_0,*}_\alpha |t|^{|\alpha|}} \right)^{1/a_0} = mCC_1e^{\frac{1}{a_0}\omega_{M^{a_0,\ast}}\left( \frac{1}{(h/L_1)^{a_0} |t| }\right) + S \log \left( \frac{1}{|t|} \right) } \\
		&\leq  mCC_1e^{\frac{2}{a_0}\omega_{M^{a_0,\ast}}\left( \frac{1}{(h/L_1)^{a_0} |t| }\right)} \leq mCC_1C_2e^{\omega_{M^{a_0,\ast}}\left( \frac{L_2}{(h/L_1)^{a_0} |t| }\right)}
	\end{align*}
	for some $h > 0$ (for all $h > 0$). This shows that $f \in  C^\infty_{P,[M],a_0}(\R^{d+1} \backslash \R^d)$. Finally, we prove that $\operatorname{bv}^m_1(f) = (T_j)_{0 \leq j \leq m-1}$. By Lemma \ref{parametrix} (and Lemma \ref{projective-description}$(i)$ in the Roumieu case), there are an ultradifferential operator $G(D_x)$ of class $[M]$ and $T_{q,j} \in C_c(\R^d) \subset \mathscr{E}'(\R^d)$ for $q=1,2$ and $j = 0, \ldots, m-1$ such that $T_j = G(D_x) T_{1,j} + T_{2,j}$ in $\mathscr{D}'^{[M]}(\R^d)$. Since
	$$
	f(T_0, \ldots, T_{{m-1}}) =  G(D_x) f(T_{1,0}, \ldots, T_{1,{m-1}}) + f(T_{2,0}, \ldots, T_{2,{m-1}}),
	$$
	the result follows from Lemma \ref{x-derivatives} and Proposition \ref{local-surj-distributions}. The result obviously shows that for all $T_0, \ldots, T_{m-1} \in  \mathscr{E}'^{[M]}(\R^d)$ there is $g \in C^\infty_{P,[M],a_0}(\R^{d+1} \backslash \R^d)$ such that  $\operatorname{bv}^m_1(g) = (T_j)_{0 \leq j \leq m-1}$. The corresponding statement for $\operatorname{bv}^m_2$ follows from it by \eqref{base-change}.
\end{proof}
We are ready to prove the two main results of this article. 
\begin{theorem}\label{main-1}
	Suppose that $a_0 = b_0$. Let $M$ be a weight sequence satisfying $(M.4)_{a_0}$  and $p!^{1/\gamma_0} \prec M$ ($p!^{1/\gamma_0} \subset M$). Let $X \subseteq \R^d$ be open and let $V \subseteq \R^{d+1}$ be open  such that $V \cap \R^d = X$. Let $f \in C^\infty_P(V\backslash X)$. The following statements are equivalent:
	\begin{itemize}
		\item[$(i)$]  $f \in C^\infty_{P,[M],a_0}(V\backslash X)$.
		\item[$(ii)$] $\operatorname{bv}(D^l_tf) \in \mathscr{D}'^{[M]}(X)$ exists for all $l \in \N_0$.
		\item[$(iii)$] $\operatorname{bv}(f) \in \mathscr{D}'^{[M]}(X)$ exists.
		\item[$(iv)$] For every relatively compact  open subset $Y$ of $X$ there is $r>0$ such that $\{ f(\, \cdot \,, t) \, | \, 0 < |t| < r \}$  is bounded in  $\mathscr{D}'^{[M]}(Y)$.
		\item[$(v)$] For every relatively compact  open subset $Y$ of $X$ there is $r > 0$ such that
		$f = G(D_x) f_1 + f_2$ on $Y \times (-r,r) \backslash Y $, where $G(D_x)$ is an ultradifferential operator of class $[M]$ and $f_q \in  C^\infty_P(Y \times (-r,r) \backslash Y) \cap L^\infty(Y \times (-r,r))$ for $q = 1,2$.
	\end{itemize}
\end{theorem}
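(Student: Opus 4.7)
The plan is to prove the cycle $(i)\Rightarrow(ii)\Rightarrow(iii)\Rightarrow(iv)\Rightarrow(v)\Rightarrow(i)$. The implication $(i)\Rightarrow(ii)$ is immediate from Proposition \ref{existence-bv-1} together with the hypothesis $a_0=b_0$, and $(ii)\Rightarrow(iii)$ is trivial. For $(iii)\Rightarrow(iv)$, I would first observe that if the limit $\lim_{t,s\to 0^+}(\int f(x,t)\varphi(x)\,dx-\int f(x,-s)\varphi(x)\,dx)$ exists for every $\varphi\in\mathscr{D}^{[M]}(X)$, then a standard Cauchy argument (fixing one of the two variables and letting the other tend to $0^+$) forces both one-sided limits $\lim_{t\to 0^+}\int f(x,\pm t)\varphi(x)\,dx$ to exist separately. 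Consequently, for every relatively compact open subset $Y\Subset X$ and $r>0$ small enough so that $Y\times(-r,r)\Subset V$, the family $\{f(\,\cdot\,,t):0<|t|<r\}$ is pointwise bounded on $\mathscr{D}^{[M]}(Y)$. Since $\mathscr{D}^{[M]}(Y)$ is barrelled, the Banach--Steinhaus theorem upgrades this to strong boundedness in $\mathscr{D}'^{[M]}(Y)$, yielding $(iv)$.

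For $(iv)\Rightarrow(v)$, I would apply the parametrix Lemma \ref{parametrix}: given $Y'\Subset Y\Subset X$, pick $\varepsilon>0$ with $\overline{Y'}+\overline{B}(0,\varepsilon)\subset Y$ and obtain an ultradifferential operator $G(D_x)$ of class $[M]$, $u_1\in\mathscr{D}^{M,h}_{\overline{B}(0,\varepsilon)}$ and $u_2\in\mathscr{D}^{[M]}_{\overline{B}(0,\varepsilon)}$ with $G(D_x)u_1=\delta+u_2$. Set $f_1:=f\ast_x u_1$ and $f_2:=-f\ast_x u_2$, each defined by $\langle f(\,\cdot\,,t),u_q(x-\,\cdot\,)\rangle$ for $0<|t|<r$ and $x\in Y'$. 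Since convolution in $x$ against a test function commutes with $P(D)$, both $f_1,f_2$ lie in $C^\infty_P(Y'\times(-r,r)\backslash Y')$, and $f=G(D_x)f_1+f_2$ there. The equicontinuity from $(iv)$ applied to the bounded translation set $\{u_q(x-\,\cdot\,):x\in Y'\}\subset\mathscr{D}^{[M]}(Y)$ gives $f_1,f_2\in L^\infty(Y'\times(-r,r))$.

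The substantive step is $(v)\Rightarrow(i)$. The term $f_2$ belongs trivially to $C^\infty_{P,[M],a_0}(V\backslash X)$ since $e^{\omega_{M^{a_0,*}}(1/(h|t|))}\geq 1$. For $G(D_x)f_1$, I would first establish the Cauchy-type estimate
\[
|D^\alpha_x f_1(x,t)|\leq \frac{C\,L^{|\alpha|}\,|\alpha|!^{1/a_0}}{|t|^{|\alpha|/a_0+S}}\,\|f_1\|_{L^\infty},\qquad (x,t)\in Y''\times(-r'',r'')\backslash Y'',\ \alpha\in\N_0^d,
\]
on any further shrinking of the domain, by representing $\chi f_1$ (with $\chi$ a suitable cutoff) through convolution with the fundamental solution $E$ of Proposition \ref{theo:existence of good fundamental solutions} and invoking its sharp regularity bound \eqref{eq:regularity}. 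Expanding $G(D_x)f_1=\sum_\alpha c_\alpha D^\alpha_x f_1$, using $|c_\alpha|M_\alpha\leq C' h_0^{|\alpha|}$ (for some $h_0$ in the Beurling case, for every $h_0$ in the Roumieu case) together with the identity $|\alpha|!^{1/a_0}/M_\alpha=(M^{a_0,*}_\alpha)^{-1/a_0}$, then applying the definition of $\omega_{M^{a_0,*}}$ and iterating the $(M.2)$-absorption $2\omega_{M^{a_0,*}}(\rho)\leq\omega_{M^{a_0,*}}(H\rho)+\log C$ to swallow the polynomial factor $|t|^{-S}$, yields the bound $|G(D_x)f_1(x,t)|\leq C''\|f_1\|_{L^\infty}\,e^{\omega_{M^{a_0,*}}(h_1/|t|)}$. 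Finally, Lemma \ref{x-derivatives}$(ii)$ (which uses the hypothesis $p!^{1/\gamma_0}\prec M$, respectively $p!^{1/\gamma_0}\subset M$) ensures $G(D_x)f_1\in C^\infty_P(V\backslash X)$, so $f\in C^\infty_{P,[M],a_0}(V\backslash X)$.

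The main obstacle is the derivation of the Cauchy estimate on $D^\alpha_x f_1$ with the precise factorial growth $|\alpha|!^{1/a_0}$ and matching $|t|^{-|\alpha|/a_0}$ scaling: the abstract Gevrey regularity for hypoelliptic operators only delivers the exponent $1/\gamma_0$, which is in general strictly weaker than $1/a_0$ (and insufficient to pair with the weight $\omega_{M^{a_0,*}}$). Obtaining the sharper exponent is exactly where Proposition \ref{theo:existence of good fundamental solutions} is used, with the cutoff chosen at scale comparable to $|t|$ so that the support of $P(D)(\chi f_1)$ is separated from $(x,t)$ at this scale, which is precisely where the pole structure of $E$ in $t$ produces the desired $|t|^{-|\alpha|/a_0}$ factor.
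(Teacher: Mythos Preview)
Your cycle $(i)\Rightarrow(ii)\Rightarrow(iii)\Rightarrow(iv)\Rightarrow(v)$ matches the paper, and your treatments of $(iii)\Rightarrow(iv)$ and $(iv)\Rightarrow(v)$ are essentially what the paper does. The divergence is in how the cycle closes. The paper does \emph{not} go $(v)\Rightarrow(i)$ directly. Instead it proves $(v)\Rightarrow(ii)$ in one line (since $f_q\in L^\infty$ implies $f_q\in C^\infty_{P,[M],a_0}$ trivially, whence $\operatorname{bv}(D^l_t f_q)$ exists by Proposition~\ref{existence-bv-1}, and Lemma~\ref{x-derivatives}$(ii)$ pushes $G(D_x)$ through), and then establishes $(ii)\Rightarrow(i)$ by an \emph{entirely different mechanism}: localise, set $T_j=\psi\,\operatorname{bv}(D^j_t f)\in\mathscr{E}'^{[M]}$, invoke Proposition~\ref{local-surj} to produce $g\in C^\infty_{P,[M],a_0}(\R^{d+1}\backslash\R^d)$ with the same boundary values, and apply Proposition~\ref{kernel-theorem} so that $f-g$ extends smoothly across $X$. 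Thus the growth estimate on $f$ is inherited from $g$, whose construction via the fundamental solution is where the sharp $a_0$-exponent enters. You never use Proposition~\ref{local-surj} or Proposition~\ref{kernel-theorem}, which are the paper's main tools for this step.

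Your direct $(v)\Rightarrow(i)$ has a real gap in the Cauchy estimate. Write $\chi f_1=E\ast P(D)(\chi f_1)$ with $\chi=\chi_1(x)\chi_2(t)$; the commutator $P(D)(\chi f_1)$ has two pieces. Where $\partial_t\chi_2\neq 0$ you can scale $\chi_2$ so that $|t_0-s|\sim|t_0|$, and \eqref{eq:regularity} indeed delivers the factor $L^{|\alpha|}|\alpha|!^{1/a_0}|t_0|^{-|\alpha|/a_0-S}$. But where $\partial_x\chi_1\neq 0$ and $\chi_2\equiv 1$, the variable $s$ ranges over the whole $t$-support, so $|t_0-s|$ is \emph{not} bounded below by a multiple of $|t_0|$, and \eqref{eq:regularity} is useless there. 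On that region you only know $|x_0-y|\geq\delta>0$, and the best available control of $D^\alpha_x E$ near $\{(z,0):z\neq 0\}$ is the Gevrey regularity $E\in\Gamma^{1/\gamma_0,1/\mu_0}(\R^{d+1}\backslash\{0\})$, giving $|\alpha|!^{1/\gamma_0}$, not $|\alpha|!^{1/a_0}$. Moreover, $P(D)(\chi f_1)$ contains derivatives of $f_1$ up to order $\deg P-1$, which you must first bound via scaled interior hypoelliptic estimates. The upshot is a two-term bound
\[
|D^\alpha_x f_1(x,t)|\leq C\,\frac{L^{|\alpha|}|\alpha|!^{1/a_0}}{|t|^{|\alpha|/a_0+S}}+C'\,\frac{L'^{|\alpha|}|\alpha|!^{1/\gamma_0}}{|t|^{N}},
\]
not the single-term estimate you state. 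The second term \emph{can} be absorbed when summing against the coefficients of $G(D_x)$, precisely because of the hypothesis $p!^{1/\gamma_0}\prec M$ (resp.\ $\subset M$), yielding a harmless $|t|^{-N}$ contribution. So your route is salvageable, but it requires this additional decomposition, interior estimates for $f_1$, and the $\Gamma^{1/\gamma_0}$-bound on $E$ away from the origin --- none of which appear in your sketch. The paper's detour through $(ii)$ and Proposition~\ref{local-surj} avoids all of this.
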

\begin{proof}
	$(i) \Rightarrow (ii)$ This has been shown in Proposition \ref{existence-bv-1}.
	
	$(ii) \Rightarrow (i)$ It suffices to show that for  every relatively compact  open subset $Y$ of $X$ and $r>0$ with $\overline{Y} \times [-r,r] \Subset V$ it holds that $f \in C^\infty_{P,[M],a_0}(Y \times (-r,r) \backslash Y)$.  Choose $\psi \in \mathscr{D}^{[M]}(X)$ such that $\psi = 1$ on $Y$. Set $T_j = \psi \bv(D^j_t f) \in \mathscr{E}'^{[M]}(\R^d)$ for $j = 0, \ldots, m-1$. By Proposition \ref{local-surj}, there is $g \in C^\infty_{P,[M],a_0}(\R^{d+1} \backslash \R^d)$ such that $\bv(D^j_t g) = T_j$ in  $\mathscr{D}'^{[M]}(\R^d)$ and thus  $\bv(D^j_t g) = T_j =  \bv(D^j_t f)$ in  $\mathscr{D}'^{[M]}(Y)$ for $j = 0, \ldots, m-1$. Proposition \ref{kernel-theorem} implies that  $f-g \in C^\infty_P(Y \times (-r,r))$, whence $f \in C^\infty_{P,[M],a_0}(Y \times (-r,r) \backslash Y)$.
	
	$(ii) \Rightarrow (iii)$ \emph{and} $(iii) \Rightarrow (iv)$  Obvious. 
	
	$(iv) \Rightarrow (v)$  This can be shown by using the parametrix method (Proposition \ref{parametrix}) in a similar way as in the proof of Proposition \ref{kernel-theorem}.
	
	$(v) \Rightarrow (ii)$ This follows from Lemma \ref{x-derivatives}$(ii)$ and Proposition \ref{existence-bv-1} as obviously $f_q\in C_{P,[M],a_0}^\infty(Y\times(-r,r)\backslash Y)$ for $q = 1,2$.
\end{proof}
\begin{theorem}\label{main-2}
	Suppose that $a_0 = b_0$. Let $M$ be a weight sequence satisfying $(M.4)_{a_0}$ and $p!^{1/\gamma_0} \prec M$ ($p!^{1/\gamma_0} \subset M$).  Let $X \subseteq \R^d$ be open and let $V \subseteq \R^{d+1}$ be open such that $V \cap \R^d = X$. Suppose that $V$ is $P$-convex for supports. For $n = 1,2$ the sequence
	$$
	0  \xrightarrow{\phantom{\phantom,\operatorname{bv}^m_n\phantom,}} C^\infty_P(V)  \xrightarrow{\phantom{\phantom,\operatorname{bv}^m_i\phantom,}} C^\infty_{P,[M],a_0}(V\backslash X) \xrightarrow{\phantom,\operatorname{bv}^m_n\phantom,} \prod_{j = 0}^{m-1}\mathscr{D}'^{[M]}(X)  \xrightarrow{\phantom{\phantom,\operatorname{bv}^m_n\phantom,}} 0
	$$
	is exact and $\operatorname{bv}^m_n$ is a topological homomorphism.
\end{theorem}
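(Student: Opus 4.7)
The plan is to verify exactness position by position, after which an open mapping theorem will upgrade the continuous surjection $\operatorname{bv}^m_n$ to a topological homomorphism. Exactness at the first two positions is essentially already in hand: the inclusion $C^\infty_P(V)\hookrightarrow C^\infty_{P,[M],a_0}(V\backslash X)$ is injective and lies in $\ker\operatorname{bv}^m_n$, since a zero solution of $P(D)$ defined across $X$ is locally bounded near $X$ and makes the limit defining $\operatorname{bv}$ vanish trivially, while $\ker\operatorname{bv}^m_2=C^\infty_P(V)$ is exactly Proposition \ref{kernel-theorem}. The recursion relations \eqref{base-change} express $t^j$ as a lower-triangular unipotent $\C[x]$-linear combination of the polynomials $P_{(k+1)}(x,t)$, so iterating Lemma \ref{x-derivatives}$(i)$ from $j=m-1$ downward yields $\ker\operatorname{bv}^m_1=\ker\operatorname{bv}^m_2$, giving exactness at the middle term for $n=1$ as well.

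The heart of the proof is surjectivity. Using the non-quasianalyticity granted by $(M.3)'$, fix an exhaustion $X=\bigcup_k Y_k$ with $\overline{Y_k}\Subset Y_{k+1}$ and a locally finite partition of unity $(\psi_k)_{k\geq 1}$ in $\mathscr{D}^{[M]}(X)$ with $S_k:=\operatorname{supp}\psi_k\Subset Y_k\backslash\overline{Y_{k-2}}$, obtained as telescoping differences of suitable cut-offs. Given $(T_j)_{0\leq j\leq m-1}\in\prod_j\mathscr{D}'^{[M]}(X)$, the pieces $T_{k,j}:=\psi_kT_j\in\mathscr{E}'^{[M]}(\R^d)$ are compactly supported, and Proposition \ref{local-surj} provides $g_k\in C^\infty_{P,[M],a_0}(\R^{d+1}\backslash\R^d)$ with $\operatorname{bv}^m_n(g_k)=(T_{k,j})_j$. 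The candidate solution will be $f=\sum_k(g_k-h_k)|_{V\backslash X}$ for correction terms $h_k\in C^\infty_P(V)$ selected so that the series is locally finite on $V\backslash X$.

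Selecting the $h_k$ is the main technical obstacle. Since $g_k$ has zero boundary value on $X\backslash S_k$, Proposition \ref{kernel-theorem} applied to the open set $V\backslash S_k\subseteq\R^{d+1}$ (which meets $\R^d$ in $X\backslash S_k$) furnishes an extension $\tilde g_k\in C^\infty_P(V\backslash S_k)$. What is needed is an $h_k\in C^\infty_P(V)$ agreeing with $\tilde g_k$ outside a relatively compact neighborhood $K_k\Supset S_k$ in $V$, with the family $(K_k)_k$ arranged to be locally finite. This is a Mittag--Leffler, or equivalently a first sheaf cohomology vanishing, statement for $\mathcal{C}^\infty_P$ on $V$; for hypoelliptic $P$ it is the substance of the $P$-convexity-for-supports hypothesis, in the spirit of the Komatsu--H\"ormander treatment of analytic and harmonic boundary values (cf.\ \cite{Komatsu,Komatsu1991}). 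Granting it, the series for $f$ converges locally finitely to an element of $C^\infty_{P,[M],a_0}(V\backslash X)$, the weighted seminorms combining over the finitely many terms nonzero on each $K\Subset V$; continuity of $\operatorname{bv}^m_n$ from Proposition \ref{existence-bv-1}, together with $\operatorname{bv}^m_n(h_k)=0$, then yields $\operatorname{bv}^m_n(f)=\sum_k(\psi_kT_j)_j=(T_j)_j$.

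Finally, the topological homomorphism property will follow from an open mapping theorem once surjectivity is in hand. Both $C^\infty_{P,[M],a_0}(V\backslash X)$ and $\prod_{j=0}^{m-1}\mathscr{D}'^{[M]}(X)$ are countable projective limits of LB-spaces with compact spectral maps in the Beurling case, and an analogous structure is available in the Roumieu case via Lemma \ref{projective-description}; De Wilde's open mapping theorem, or its (PLS)-variant, therefore applies to the continuous surjection $\operatorname{bv}^m_n$ and delivers the topological homomorphism property. The principal anticipated difficulty throughout is the gluing step in the third paragraph, which is precisely where the $P$-convexity hypothesis on $V$ is essential.
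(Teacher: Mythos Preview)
Your treatment of exactness at the first two positions and the open-mapping conclusion is correct and matches the paper. The surjectivity argument, however, contains a genuine error, not merely an unjustified step. You assert that for each $k$ there exists $h_k\in C^\infty_P(V)$ agreeing \emph{exactly} with $\tilde g_k$ on $V\setminus K_k$ for some compact $K_k\Supset S_k$. This is false in general. The difference $h_k-\tilde g_k$ would be a $P$-solution on $V\setminus S_k$ vanishing on the nonempty open set $V\setminus K_k$; whenever $P$-solutions enjoy unique continuation (e.g.\ $P$ elliptic, so that solutions are real-analytic) this forces $h_k=\tilde g_k$ on all of $V\setminus S_k$, hence $\tilde g_k$ extends across $S_k$ to an element of $C^\infty_P(V)$. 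But for $(T_{k,j})_j\neq 0$ that is impossible: already for the Laplacian in $\R^2$ with $T_{k,0}=\delta_0$, the corresponding $\tilde g_k$ is a constant multiple of $\log|(x,t)|$, which has a genuine singularity at the origin. Your identification of this step with $H^1(V,\mathcal C^\infty_P)=0$ is also off: a Cousin-I splitting $\tilde g_k=f_0-f_1$ with $f_0\in C^\infty_P(V\setminus S_k)$ and $f_1\in C^\infty_P(\operatorname{int}K_k)$ does not yield an $h_k$ with the support property you need, since $f_0$ has no reason to vanish on $V\setminus K_k$.

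The paper proves surjectivity by an abstract route that is the correct substitute: it introduces projective spectra $\mathscr X=(C^\infty_P(V_l))_l$, $\mathscr Y=(C^\infty_{P,[M],a_0}(V_l\setminus X_l))_l$, $\mathscr Z=(\prod_j\mathscr D'^{[M]}(X_l))_l$ over an exhaustion $(V_l)_l$ of $V$, checks that each localized map $\operatorname{bv}^m_{n,l}$ is surjective one step down (this is exactly your use of Proposition~\ref{local-surj}), and then invokes $\operatorname{Proj}^1\mathscr X=0$, which is the precise homological content of $P$-convexity for supports, to conclude surjectivity of the global map via \cite[Proposition 3.1.8]{Wengenroth}. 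A correct concrete version of your strategy would be a genuine Mittag--Leffler approximation: one does \emph{not} obtain a locally finite sum, but instead chooses $h_k\in C^\infty_P(V)$ so that $\sum_k(g_k-h_k)$ converges in $C^\infty_{P,[M],a_0}(V\setminus X)$, using that restrictions of global $P$-solutions are dense in $C^\infty_P(V_l)$ in the Runge sense. That density is equivalent to $\operatorname{Proj}^1\mathscr X=0$, so the two routes coincide in substance; but the locally-finite formulation you propose cannot work as stated.
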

\begin{proof}
	The mappings $\operatorname{bv}^m_1$ and $\operatorname{bv}^m_2$ are continuous by Proposition \ref{existence-bv} and Proposition \ref{existence-bv-1}, respectively. Proposition \ref{kernel-theorem} yields that  $\ker \operatorname{bv}^m_n = C^\infty_P(V)$. Next, we show that  $\operatorname{bv}^m_n$ is surjective. To this end, we shall use some basic facts about the derived projective limit functor; we refer to the book \cite{Wengenroth} for more information. Choose a sequence $(V_l)_{l \in \N_0}$ of relatively compact  open subsets of $V$ such that $\overline{V}_l \Subset V_{l+1}$ and $V = \bigcup_{l \in \N_0} V_l$. Set $X_l = V_l \cap X$ and $\operatorname{bv}^m_{n,l} = \operatorname{bv}^m_{n} :  C^\infty_{P,[M],a_0}(V_l\backslash X_l) \rightarrow \prod_{j = 0}^{m-1}\mathscr{D}'^{[M]}(X_l)$ for $l \in \N_0$. We define the projective spectra
	$$
	\mathscr{X} = (C^\infty_P(V_l))_{l \in \N_0}, \; \mathscr{Y} = (C^\infty_{P,[M],a_0}(V_l\backslash X_l))_{l \in \N_0}, \; \mathscr{Z} = \left( \prod_{j = 0}^{m-1}\mathscr{D}'^{[M]}(X_l) \right)_{l \in \N_0},
	$$
	(with restriction as spectral mappings) and the morphism $(\operatorname{bv}^m_{n,l})_{l \in \N_0}:  \mathscr{Y} \rightarrow  \mathscr{Z}$. 
	We need to show that the mapping
	$$
	\operatorname{bv}^m_n = \operatorname{Proj} \, (\operatorname{bv}^m_{n,l})_{l \in \N_0} : C^\infty_{P,[M],a_0}(V\backslash X) = \operatorname{Proj} \, \mathscr{Y} \rightarrow \prod_{j = 0}^{m-1}\mathscr{D}'^{[M]}(X) = \operatorname{Proj} \, \mathscr{Z} 
	$$
	is surjective. Proposition \ref{kernel-theorem} yields  that $\mathscr{X} = \ker (\operatorname{bv}^m_{n,l})_{l \in \N_0}$. By \cite[Proposition 3.1.8]{Wengenroth}, it suffices to show:
	\begin{itemize}
		\item[$(i)$] For all $l \in \N_0$ and $T_0, \ldots, T_{m-1} \in \mathscr{D}'^{[M]}(X_{l+1})$ there is $f \in C^\infty_{P,[M],a_0}(V_l\backslash X_l)$ such that $\operatorname{bv}^m_{n,l}(f) = (T_{0| X_l}, \ldots, T_{m-1| X_{l}})$.
		\item[$(ii)$] $\operatorname{Proj}^1 \, \mathscr{X} = 0$. 
	\end{itemize}
	Property $(i)$ is a consequence of Proposition \ref{local-surj} (cf.\ the proof of  $(ii) \Rightarrow (i)$  in Theorem \ref{main-1}), whereas $(ii)$ follows from the fact that $V$ is $P$-convex for supports \cite[Section 3.4.4]{Wengenroth}. 
	Finally, since $C^\infty_{P,[M],a_0}(V\backslash X)$ is webbed and  $\prod_{j = 0}^{m-1}\mathscr{D}'^{[M]}(X)$ is ultrabornological,  $\operatorname{bv}^m_n$ is a topological homomorphism by the open mapping theorem of De Wilde.
\end{proof}
We end this section by giving the analogues of Theorem \ref{main-1} and Theorem \ref{main-2} for distributions.

\begin{theorem} \emph{(cf.\ \cite[Satz 2.4]{Langenbruch1978})} 
	Let $X \subseteq \R^d$ be open and let $V \subseteq \R^{d+1}$ be open such that $V \cap \R^d = X$. Let $f \in C^\infty_P(V\backslash X)$. The following statements are equivalent:
	\begin{itemize}
		\item[$(i)$]  $f \in C^\infty_{P, \operatorname{pol}}(V\backslash X)$.
		\item[$(ii)$] $\operatorname{bv}(D^l_tf) \in \mathscr{D}'(X)$ exists for all $l \in \N_0$.
		\item[$(iii)$] $\operatorname{bv}(f) \in \mathscr{D}'(X)$ exists.
		\item[$(iv)$] For every relatively compact  open subset $Y$ of $X$ there is $r>0$ such that $\{ f(\, \cdot \,, t) \, | \, 0 < |t| < r \}$  is bounded in  $\mathscr{D}'(Y)$.
		\item[$(v)$] For every relatively compact  open subset $Y$ of $X$ there is $l \in \N_0$ such that
		$f = \Delta^l_x f_1 + f_2$ on $Y \times (-r,r) \backslash Y $, where $f_q \in  C^\infty_P(Y \times (-r,r) \backslash Y) \cap L^\infty(Y \times (-r,r))$ for $q = 1,2$.
	\end{itemize}
\end{theorem}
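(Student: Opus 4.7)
The plan is to establish the equivalences following the same cyclic pattern as in Theorem \ref{main-1}, replacing each ultradistributional ingredient by its distributional counterpart, which is available either elsewhere in the paper or by classical arguments. Concretely, I will prove $(i)\Rightarrow(ii)\Rightarrow(iii)\Rightarrow(iv)\Rightarrow(v)\Rightarrow(ii)$ and then $(ii)\Rightarrow(i)$.

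The implication $(i)\Rightarrow(ii)$ is exactly the content of Proposition \ref{existence-bv-distributions}. The implication $(ii)\Rightarrow(iii)$ is immediate. For $(iii)\Rightarrow(iv)$, I would observe that if the joint limit $\lim_{t,s\to 0^+}\int_X (f(x,t)-f(x,-s))\varphi(x)\,dx$ exists and is finite for every $\varphi\in\mathscr{D}(X)$, then by a Cauchy-net argument each of $\int_X f(\cdot,t)\varphi$ and $\int_X f(\cdot,-s)\varphi$ must converge separately, hence is bounded in $t,s$; thus $\{f(\cdot,t) : 0<|t|<r\}$ is weakly bounded in $\mathscr{D}'(Y)$, which by Banach–Steinhaus yields the required boundedness. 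For $(v)\Rightarrow(ii)$: any element $g\in C^\infty_P(Y\times(-r,r)\backslash Y)\cap L^\infty$ trivially belongs to $C^\infty_{P,\operatorname{pol}}(Y\times(-r,r)\backslash Y)$ (take $N=0$), so $\bv(D^l_t g)\in\mathscr{D}'(Y)$ exists by Proposition \ref{existence-bv-distributions}; since $\Delta^l_x$ is a constant coefficient operator commuting with $P(D)$, $\bv(D^l_t(\Delta^l_x f_1)) = \Delta^l_x\bv(D^l_t f_1)$ exists in $\mathscr{D}'(Y)$ by the distributional analogue of Lemma \ref{x-derivatives}$(i)$, and the same holds for $f_2$.

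The main ingredient I need to develop is $(iv)\Rightarrow(v)$, for which I would use the Schwartz parametrix method. Let $Y\Subset X$ and let $r>0$ and $s>0$ be such that $\overline{Y}+\overline{B}(0,2s)\subseteq X$ and $(\overline{Y}+\overline{B}(0,2s))\times[-r,r]\Subset V$. Since $\{f(\cdot,t):0<|t|<r\}$ is equicontinuous in $\mathscr{D}'(\overline{Y}+\overline{B}(0,2s))$, there are constants $C,N\in\N_0$ with $|\langle f(\cdot,t),\varphi\rangle|\leq C\|\varphi\|_N$ for $\varphi\in\mathscr{D}_{\overline{Y}+\overline{B}(0,2s)}$, $0<|t|<r$. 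Choose $l\in\N$ large enough so that the Newtonian-type fundamental solution of $\Delta^l$ in $\R^d$ lies in $C^{N}(\R^d\backslash\{0\})\cap L^1_{\operatorname{loc}}(\R^d)$, cut it off by a function in $\mathscr{D}^{}_{\overline{B}(0,s)}$ equal to $1$ near $0$, and thereby obtain $u_1\in C^N_c(\R^d)$ supported in $\overline{B}(0,s)$ and $u_2\in\mathscr{D}_{\overline{B}(0,s)}$ with $\Delta^l_x u_1 = \delta + u_2$. Defining $f_q(x,t) := (f(\cdot,t)\ast u_q)(x)$ for $q=1,2$ and $(x,t)\in Y\times(-r,r)\backslash Y$, we obtain $f=\Delta^l_x f_1+f_2$ on $Y\times(-r,r)\backslash Y$; the uniform bound $|f_1(x,t)|\leq C\|u_1\|_N$ and the corresponding bound for $f_2$ show $f_q\in C^\infty_P(Y\times(-r,r)\backslash Y)\cap L^\infty(Y\times(-r,r))$, since $u_q$ is a compactly supported $x$-distribution and $P(D)$ acts only through $x$- and $t$-derivatives commuting with convolution in $x$.

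Finally, for $(ii)\Rightarrow(i)$, I would adapt the argument used in Theorem \ref{main-1}. It suffices to show $f\in C^\infty_{P,\operatorname{pol}}(Y\times(-r,r)\backslash Y)$ for every relatively compact $Y\Subset X$ and $r>0$ with $\overline{Y}\times[-r,r]\Subset V$. Choose $\psi\in\mathscr{D}(X)$ with $\psi=1$ on $Y$ and set $T_j := \psi\,\bv(D^j_t f)\in\mathscr{E}'(\R^d)$ for $j=0,\ldots,m-1$. Applying Proposition \ref{local-surj-distributions} together with the change-of-basis relations \eqref{base-change}, we obtain $g\in C^\infty_{P,\operatorname{pol}}(\R^{d+1}\backslash\R^d)$ with $\bv(D^j_t g)=T_j=\bv(D^j_t f)$ in $\mathscr{D}'(Y)$ for $j=0,\ldots,m-1$. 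Proposition \ref{kernel-theorem-dist} then yields $f-g\in C^\infty_P(Y\times(-r,r))$, whence $f\in C^\infty_{P,\operatorname{pol}}(Y\times(-r,r)\backslash Y)$, as required. The hardest step is $(iv)\Rightarrow(v)$, because it requires matching the order of the parametrix $u_1$ to the uniform order $N$ of the bounded net $\{f(\cdot,t)\}_t\subset\mathscr{D}'(Y)$; once the right regularity is identified the remaining arguments are routine adaptations of the ultradistributional framework.
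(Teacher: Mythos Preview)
Your proposal is correct and follows essentially the same approach as the paper, which simply states that the theorem can be shown in a similar way as Theorem \ref{main-1} (using Proposition \ref{existence-bv-distributions}, Proposition \ref{local-surj-distributions}, Proposition \ref{kernel-theorem-dist}, and the Schwartz parametrix in place of their ultradistributional counterparts). The only minor imprecision is in $(iv)\Rightarrow(v)$: for the cutoff to yield $u_1\in C^N_c(\R^d)$ you need the fundamental solution of $\Delta^l$ to lie in $C^N(\R^d)$ (not merely $C^N(\R^d\setminus\{0\})$), which indeed holds once $2l-d>N$.
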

\begin{proof}
	This can be shown in a similar way as Theorem \ref{main-1}.
\end{proof}
The mappings $\operatorname{bv}^m_1$ and $\operatorname{bv}^m_2$ for distributions are defined similarly as in Definition \ref{def12}. We then have:
\begin{theorem}  \emph{(cf.\ \cite[Satz 3.3 and the subsequent remark]{Langenbruch1978})}
	Let $X \subseteq \R^d$ be open and let $V \subseteq \R^{d+1}$ be open such that $V \cap \R^d = X$. Suppose that $V$ is $P$-convex for supports. For $n = 1,2$ the sequence
	$$
	0  \xrightarrow{\phantom{\phantom,\operatorname{bv}^m_n\phantom,}} C^\infty_P(V)  \xrightarrow{\phantom{\phantom,\operatorname{bv}^m_i\phantom,}} C^\infty_{P,\operatorname{pol}}(V\backslash X) \xrightarrow{\phantom,\operatorname{bv}^m_n\phantom,} \prod_{j = 0}^{m-1}\mathscr{D}'(X)  \xrightarrow{\phantom{\phantom,\operatorname{bv}^m_n\phantom,}} 0
	$$
	is exact and $\operatorname{bv}^m_n$ is a topological homomorphism.
\end{theorem}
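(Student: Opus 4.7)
The plan is to mirror the argument of Theorem \ref{main-2}, replacing the ultradistributional ingredients with their distributional counterparts. Continuity of $\operatorname{bv}^m_1$ and $\operatorname{bv}^m_2$ is already furnished by Proposition \ref{existence-bv-distributions} (combined with the relations \eqref{base-change} for $\operatorname{bv}^m_2$), and the identity $\ker \operatorname{bv}^m_n = C^\infty_P(V)$ is precisely the content of Proposition \ref{kernel-theorem-dist}. Thus the main task is surjectivity, after which the topological homomorphism statement will follow by the De Wilde open mapping theorem since $C^\infty_{P,\operatorname{pol}}(V \backslash X)$ is webbed (being a countable projective limit of $(LB)$-spaces) and $\prod_{j=0}^{m-1}\mathscr{D}'(X)$ is ultrabornological.

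To establish surjectivity, I would set up the Wengenroth framework exactly as in the proof of Theorem \ref{main-2}. Fix a sequence $(V_l)_{l \in \N_0}$ of relatively compact open subsets with $\overline{V}_l \Subset V_{l+1}$ and $V = \bigcup_l V_l$, put $X_l = V_l \cap X$, and consider the projective spectra
\[
\mathscr{X} = (C^\infty_P(V_l))_l, \quad \mathscr{Y} = (C^\infty_{P,\operatorname{pol}}(V_l \backslash X_l))_l, \quad \mathscr{Z} = \Bigl(\prod_{j=0}^{m-1} \mathscr{D}'(X_l)\Bigr)_l,
\]
connected by the morphism $(\operatorname{bv}^m_{n,l})_l$. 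By Proposition \ref{kernel-theorem-dist}, $\mathscr{X}$ is the kernel spectrum. According to \cite[Proposition 3.1.8]{Wengenroth}, surjectivity of $\operatorname{Proj}(\operatorname{bv}^m_{n,l})_l$ is ensured provided that $(i)$ each local lifting problem can be solved, i.e.\ for every $l$ and every tuple $(T_0, \dots, T_{m-1}) \in \prod_j \mathscr{D}'(X_{l+1})$ there exists $f \in C^\infty_{P,\operatorname{pol}}(V_l \backslash X_l)$ with $\operatorname{bv}^m_{n,l}(f) = (T_{j|X_l})_j$, and $(ii)$ $\operatorname{Proj}^1 \mathscr{X} = 0$.

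For $(i)$ I would take a cut-off $\psi \in \mathscr{D}(X_{l+1})$ with $\psi = 1$ on a neighbourhood of $\overline{X}_l$ in $X_{l+1}$, form the compactly supported distributions $\psi T_j \in \mathscr{E}'(\R^d)$, and apply Proposition \ref{local-surj-distributions} to produce $f(\psi T_0, \dots, \psi T_{m-1}) \in C^\infty_{P,\operatorname{pol}}(\R^{d+1} \backslash \R^d)$ realising the prescribed boundary values in $\mathscr{D}'(X_l)$ for $\operatorname{bv}^m_1$; for $\operatorname{bv}^m_2$ one uses \eqref{base-change} to pass from the $P_{(j+1)}(D)f$-boundary values to the $D_t^j f$-boundary values, exactly as done for Theorem \ref{main-2}. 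Property $(ii)$ is a standard fact for kernels of hypoelliptic operators on $P$-convex open sets and is recalled in \cite[Section 3.4.4]{Wengenroth}.

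The only subtlety I expect, and the principal obstacle, is checking that the locally produced lifting $f$ really lands in $C^\infty_{P,\operatorname{pol}}(V_l \backslash X_l)$ rather than merely in $C^\infty_P(V_l \backslash X_l)$. This is handled by the polynomial growth bound in Proposition \ref{local-surj-distributions}, which comes from the $|t|^{-S}$ estimate \eqref{eq:regularity} for the fundamental solution $E$; restriction to the relatively compact set $V_l$ then makes the bound uniform. Once this is verified, all pieces assemble in exact analogy with Theorem \ref{main-2}.
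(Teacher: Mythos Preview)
Your proposal is correct and follows essentially the same approach as the paper: the paper's own proof consists of the single sentence ``This can be shown in a similar way as Theorem \ref{main-2},'' and you have carried out precisely this parallel argument, invoking Propositions \ref{existence-bv-distributions}, \ref{kernel-theorem-dist} and \ref{local-surj-distributions} in place of their ultradistributional counterparts and then applying the Wengenroth framework and De Wilde's open mapping theorem exactly as in Theorem \ref{main-2}.
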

\begin{proof}
	This can be shown in a similar way as Theorem \ref{main-2}.
\end{proof}

\section{Semi-elliptic polynomials}\label{sect-examples} 
In this section we discuss the assumptions of our main results (Theorem \ref{main-1} and Theorem \ref{main-2}) for semi-elliptic polynomials. For $\beta\in\N_0^{d+1}$ and ${\bf{n}}  \in\N^{d+1}$ we write $|\beta:{\bf{n}}|:=\sum_{j=1}^{d+1}\frac{\beta_j}{n_j}$. 
\begin{definition}
	A polynomial $P\in\C[\xi_1,\ldots,\xi_{d+1}]$ is called \textit{semi-elliptic} if there is  ${\bf{n}} \in\N^{d+1}$ such that $P$ can be written as 
	$$
	P(\xi)=\sum_{|\beta: {\bf{n}}|\leq 1}c_\beta\xi^\beta
	$$ 
	and 
	$$
	P^0(\xi):=\sum_{|\beta: {\bf{n}}|=1}c_\beta\xi^\beta\neq 0
	$$
	for all $\xi\in\R^{d+1}\backslash\{0\}$. 
\end{definition}
If $P$ is semi-elliptic, the multi-index ${\bf{n}}$ in the above definition is unique. More precisely, $n_j=\operatorname{deg}_{\xi_j}P$, $j= 1, \ldots, d+1$ \cite[Lemma 7.14]{Treves}. Furthermore,  $\deg P = \max\{\operatorname{deg}_{x_j}P\,|\, 1\leq j\leq d+1\}$ \cite[Proposition 2]{FrKa}. It is well-known that semi-elliptic polynomials are hypoelliptic; see \cite[Theorem 11.1.11]{HoermanderPDO2} and \cite[Theorem 7.7]{Treves}.  

We now determine the values $a_0(P)$, $b_0(P)$ and $\gamma_0(P)$ for semi-elliptic polynomials $P$.
\begin{proposition}\label{prop:semi-elliptic a_0 and b_0}
	Let $P(x,t)=\sum_{k=0}^m Q_k(x)t^k$ be semi-elliptic. Then, 
	\begin{align*}
		&a_0(P)=\frac{\min\{\operatorname{deg}_{x_j}P\,|\, 1\leq j\leq d\}}{m}, \\
		&b_0(P) = \frac{\max\{\operatorname{deg}_{x_j}P\,|\, 1\leq j\leq d\}}{m} = \frac{\deg Q_0 }{m}, \\
		&\gamma_0(P)=\frac{\min\{\operatorname{deg}_{x_j}P\,|\, 1\leq j\leq d\}}{\operatorname{deg} P}.
	\end{align*}
\end{proposition}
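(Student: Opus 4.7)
The plan is to exploit the weighted-homogeneous structure of $P$ throughout. Writing $\mathbf{n}=(n_1,\ldots,n_d,m)$ with $n_j=\deg_{\xi_j}P$, the principal part $P^0(\xi)=\sum_{|\beta:\mathbf{n}|=1}c_\beta\xi^\beta$ satisfies $P^0(t^{1/n_j}\eta_j)=t\,P^0(\eta)$ and is non-vanishing on $\R^{d+1}\setminus\{0\}$. Testing $P^0$ at each coordinate vector shows that the coefficient of $\xi_j^{n_j}$ is nonzero for every $j$, so in particular $\deg Q_0\geq n_j$ for every $j\leq d$. By compactness on the weighted unit sphere, $|P^0(x,t)|\geq c\,s(x,t)$ where $s(x,t):=\max(\max_j|x_j|^{n_j},|t|^m)$, and since the remaining terms of $P$ contribute only $O(s^{1-\varepsilon})$ one obtains $|P(x,t)|\geq c\,s(x,t)$ for $s$ large.

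For $b_0$, the constraint $|\beta:\mathbf{n}|\leq 1$ applied to monomials of $Q_k$ yields $\deg Q_k\leq n_{\max,x}(m-k)/m$, whence $\deg Q_k/(m-k)\leq n_{\max,x}/m$. Combined with $\deg Q_0\geq n_j$ for all $j\leq d$, this forces $\deg Q_0=n_{\max,x}$ and identifies the maximum defining $b_0$ as attained at $k=0$.

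For $a_0$ I use characterization $(ii)$ of Lemma \ref{lemma-a_0}. For the upper bound $a_0\leq n^*/m$ with $n^*=\min_{j\leq d}n_j$, take $x=Re_{j_0}$ with $n_{j_0}=n^*$ and rescale $\zeta=R^{n^*/m}w$: then $R^{-n^*}P(Re_{j_0},R^{n^*/m}w)\to P^0(e_{j_0},w)$, a degree-$m$ polynomial in $w$ with leading coefficient $1$ whose roots are all non-real by semi-ellipticity, and a Rouche argument places a root of $P(Re_{j_0},\cdot)$ within $o(R^{n^*/m})$ of each rescaled root, giving $d(Re_{j_0})=O(R^{n^*/m})$. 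For the lower bound I argue by contradiction: given $(x^{(k)},\zeta^{(k)})$ with $P=0$, $|x^{(k)}|\to\infty$, and $|\operatorname{Im}\zeta^{(k)}|/|x^{(k)}|^{n^*/m}\to 0$, set $s_k=s(x^{(k)},\zeta^{(k)})$ and pass to the rescaled variables $\eta_j^{(k)}=x_j^{(k)}/s_k^{1/n_j}$, $w^{(k)}=\zeta^{(k)}/s_k^{1/m}$ which lie on the weighted unit sphere. The critical estimate is $s_k^{1/m}\geq c|x^{(k)}|^{n_{j^*}/m}\geq c|x^{(k)}|^{n^*/m}$, where $j^*$ achieves the maximum of $|x_j^{(k)}|$ along a subsequence; combined with the hypothesis this forces $|\operatorname{Im} w^{(k)}|\to 0$, so the limit $(\eta^*,w^*)$ lies in $\R^{d+1}\setminus\{0\}$, giving $P^0(\eta^*,w^*)\neq 0$ by semi-ellipticity and contradicting $P(x^{(k)},\zeta^{(k)})=s_kP^0(\eta^{(k)},w^{(k)})+O(s_k^{1-\varepsilon})=0$.

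For $\gamma_0$ the target pair is $(n^*/\deg P,m/\deg P)$. Since $P$ is a polynomial, only finitely many $(\alpha,l)$ give nonzero $P^{(\alpha,l)}$, so constants depending on $|\alpha|+l$ are harmless. Admissibility follows from the weighted scaling bound $|P^{(\alpha,l)}(x,t)|\leq C\,s^{1-\sum\alpha_j/n_j-l/m}$ together with $|x_j|^{n^*/\deg P}\leq s^{1/\deg P}$ (using $\deg P\geq n_j$) and $|t|^{m/\deg P}\leq s^{1/\deg P}$; the required inequality reduces to the arithmetic $(|\alpha|+l)/\deg P\leq\sum\alpha_j/n_j+l/m$, which holds term-by-term. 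Sharpness is established by tailored tests: if $\deg P=n_{\max,x}$, choose $\eta$ on the weighted unit sphere with $\partial_{x_{j_{\max}}}Q_0^0(\eta)\neq 0$ and $\eta_{j^*}\neq 0$, set $x_j=s^{1/n_j}\eta_j$, $t=0$, and differentiate once in $x_{j_{\max}}$ to force $\gamma_0\leq n^*/n_{\max,x}$; if $\deg P=m$, the analogous test with $t=s^{1/m}$ and a first $t$-derivative forces $\gamma_0\leq n^*/m$. The main obstacle is the lower bound for $a_0$, where setting up the rescaling so that $\operatorname{Im} w^{(k)}\to 0$ requires juggling the index $j^*$ achieving the maximum of $|x_j^{(k)}|$ against the minimum weight $n^*$; everything else amounts to arithmetic bookkeeping with weighted degrees or a routine compactness/Rouche application.
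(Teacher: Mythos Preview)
Your argument is correct. The treatment of $b_0$ matches the paper's: both use the weighted-degree constraint $|\beta:\mathbf n|\le 1$ for the upper bound and the non-vanishing of the coefficient of $\xi_j^{n_j}$ in $P^0$ for the lower bound.

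For $a_0$ and $\gamma_0$, however, you take genuinely different routes from the paper. For $\gamma_0$ the paper simply cites \cite[Theorem 11.4.15]{HoermanderPDO2} and \cite[Theorem 7.7]{Treves}, whereas you supply a self-contained proof: admissibility of $(n^*/\deg P,\,m/\deg P)$ reduces to the term-by-term inequality $(|\alpha|+l)/\deg P\le\sum_j\alpha_j/n_j+l/m$, and sharpness is read off from a single derivative along a suitably chosen weighted ray. For $a_0$ the paper works through characterization $(i)$ of Lemma \ref{lemma-a_0}: the lower bound $a_0\ge n^*/m$ is an immediate consequence of the estimate $|\xi|_{\mathbf n}^{|\beta:\mathbf n|}|D^\beta P(\xi)|\le C|P(\xi)|$ established in \cite{Treves}, and the upper bound comes from combining the general inequality \eqref{eq:Q_0 dominates} with the lower bound $|P|\ge c|\xi|_{\mathbf n}$ to deduce $|x|^{a_0 m}\le C\sum_j|x_j|^{n_j}$. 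You instead use characterization $(ii)$ throughout, obtaining the upper bound by rescaling the roots of $P(Re_{j_0},\,\cdot\,)$ and invoking continuity of roots, and the lower bound by a rescaling/compactness argument that manufactures a real zero of $P^0$ on the weighted unit sphere. The paper's route is shorter because it leans on inequalities already available in the literature; yours is more self-contained and geometric, and the Rouch\'e step in fact delivers the two-sided asymptotic $d(Re_{j_0})\sim c\,R^{n^*/m}$ directly.
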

\begin{proof}
	The result for $\gamma_0$ follows from \cite[Theorem 11.4.15]{HoermanderPDO2} (see also \cite[Theorem 7.7]{Treves}). Set $n_j = \deg_{x_j} P$, $j=1, \ldots ,d$, $n_{d+1} = m$ and ${\bf{n}} = (n_1, \ldots, n_{d+1})$. Then, $P(\xi)=\sum_{|\beta: {\bf{n}}|\leq 1}c_\beta\xi^\beta$, $\xi \in \R^{d+1}$, for suitable $c_\beta \in \C$ and 
	$P^0(\xi)=\sum_{|\beta: {\bf{n}}|=1}c_\beta \xi^\beta\neq 0$ for all $\xi\in\R^{d+1}\backslash\{0\}$. We define $|\xi|_{{\bf{n}}} =\sum_{j=1}^{d+1}|\xi_j|^{n_j}, \xi\in\R^{d+1}$.  Note that for all $\beta \in \N_0^{d+1}$
	\begin{equation}\label{eq:semi-elliptic 1}
		\,|\xi^\beta|\leq |\xi|_{{\bf{n}}}^{|\beta: {\bf{n}}|}, \qquad \xi\in\R^{d+1}.
	\end{equation}
	Furthermore, there are $C,R \geq 1$ such that for all  $\beta\in\N_0^{d+1}$  
	\begin{equation}\label{eq:semi-elliptic 5}
		|\xi|_{{\bf{n}}}^{|\beta: {\bf{n}}|}|D^\beta P(\xi)|\leq C|P(\xi)|, \qquad |\xi| \geq R,
	\end{equation}
	as essentially follows from \eqref{eq:semi-elliptic 1} and the fact that $P$ is semi-elliptic; see the proof of \cite[Theorem 7.7]{Treves} for details. We now determine  $a_0$. For all $x\in\R^d$ with $|x| \geq R$, $t \in \R$ and $l \in \N_0$ it holds that
	\begin{align*}
		|(x,t)|_{{\bf{n}}}^{|(0, \ldots, 0,l): {\bf{n}}|} &\geq\left(\sum_{j=1}^d |x_j|^{n_j}\right)^\frac{l}{m}\geq \left(\sum_{j=1}^d\left(\frac{|x_j|}{|x|}\right)^{n_j} |x|^{\min\{n_j \,| \, 1 \leq j\leq d\}}\right)^\frac{l}{m}\\
		&\geq \inf_{y\in\R^d, |y|=1}\left(\sum_{j=1}^d |y_j|^{n_j}\right)^\frac{l}{m}|x|^{l \frac{\min\{n_j \, | \,1\leq j\leq d\}}{m}},
	\end{align*}
	which, by \eqref{eq:semi-elliptic 5}, shows that $a_0\geq \min\{n_j\, | \,1\leq j\leq d\}/m$. For the converse inequality, note that
	\begin{equation}
		Q_0(x) = \sum_{|(\alpha,0) : {\bf{n}}| \leq 1} c_{(\alpha,0)} x^{\alpha}.
		\label{Q0-repr}
	\end{equation}
	Hence, \eqref{eq:semi-elliptic 1} implies that 
	$$
	|Q_0(x)|\leq C_0\sum_{j=1}^d |x_j|^{n_j}, \qquad x \in \R^d, |x| \geq 1,
	$$
	for some $C_0 > 0$.  Inequality \eqref{eq:semi-elliptic 5} particularly  shows that 
	$$
	|\xi|_{{\bf{n}}}\leq C_1|P(\xi)|, \qquad |\xi| \geq R,
	$$
	for some $C_1 > 0$. The above two inequalities together with \eqref{eq:Q_0 dominates} yield that  for $x\in\R^d$ with $|x|$ sufficiently large
	$$\sum_{j=1}^d |x_j|^{n_j}+|x|^{a_0 m}=|(x,|x|^{a_0})|_{{\bf{n}}}\leq C_1 |P(x,|x|^{a_0})|\leq C\sum_{j=1}^d |x_j|^{n_j}$$
	for some $C > 0$, which implies that $a_0\leq \min\{n_j\, | \,1\leq j\leq d\}/m$. This proves the result for $a_0$. Next, we determine $b_0$.  Let $0 \leq k \leq m-1$ with $Q_k \neq 0$ be arbitrary. Then, there is $\alpha \in \N^d$ with $|\alpha| = \deg Q_k$ such that $c_{(\alpha,k)} \neq 0$ and, thus, $|(\alpha,k) : {\bf{n}}| \leq 1$. Hence,
	$$
	1\geq  \sum_{j=1}^d\frac{\alpha_j}{n_j}+\frac{k}{m} \geq \frac{\deg Q_k}{\max \{n_j\, | \,1\leq j\leq d\}}+\frac{k}{m}.
	$$
	This implies that  $\operatorname{deg}Q_k/(m-k)\leq \max \{n_j\, | \,1\leq j\leq d\}/m$ and, thus, $b_0\leq  \max \{n_j\, | \,1\leq j\leq d\}/m$. Let $1 \leq j \leq d$ be arbitrary. Consider $\alpha^j  =(0, \ldots, \alpha^j_j = n_j, \ldots, 0) \ \in \N^d$. Then,  $c_{(\alpha^j,0)} \neq 0$ for otherwise $(\delta_{l,j})_{1 \leq l \leq d+1} \neq 0$ would be a zero of $P^0$. By \eqref{Q0-repr}, we obtain that $n_j = \deg_{x_j} Q_0$. Since $Q_0$ is semi-elliptic (on $\R^d$)  as $P$ is so, we have that $\operatorname{deg}Q_0=\max\{\deg_{x_j} Q_0 \,| \, 1\leq j\leq d\}$.  Hence,  $\max \{n_j\, | \,1\leq j\leq d\}/m = \deg Q_0/m$. Clearly,  $\deg Q_0/m \leq b_0$. This proves the result for $b_0$.
\end{proof}
\begin{remark}
	Let $P(x,t)=\sum_{k=0}^m Q_k(x)t^k$ be semi-elliptic. By \cite[Theorem 11.4.15]{HoermanderPDO2}, we have that $\mu_0(P)=m/\operatorname{deg}P$. Since the exact value of $\mu_0(P)$ is not important to us, we do not consider it in what follows.
\end{remark}
\begin{corollary}\label{cor:semi-elliptic a_0 and b_0} 
	Let $P(x,t)=\sum_{k=0}^m Q_k(x)t^k$ be a semi-elliptic polynomial with $\operatorname{deg}_{x_1}P=\cdots=\operatorname{deg}_{x_d}P=n$. Then, 
	$$
	a_0(P)= b_0(P) = \frac{n}{m} =\frac{\deg Q_0 }{m} \qquad \mbox{and} \qquad \gamma_0(P)= \min \left\{ 1, \frac{n}{m} \right\}.
	$$
\end{corollary}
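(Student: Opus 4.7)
The plan is to derive this corollary as a direct specialization of the preceding Proposition \ref{prop:semi-elliptic a_0 and b_0}, with only minor bookkeeping needed for the $\gamma_0$ formula.

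First, I would observe that the hypothesis $\operatorname{deg}_{x_1}P=\cdots=\operatorname{deg}_{x_d}P=n$ trivially collapses the minimum and maximum appearing in the formulas of Proposition \ref{prop:semi-elliptic a_0 and b_0}: both
$\min\{\operatorname{deg}_{x_j}P\,|\,1\le j\le d\}$ and $\max\{\operatorname{deg}_{x_j}P\,|\,1\le j\le d\}$ equal $n$. Substituting directly gives $a_0(P)=b_0(P)=n/m$, and the identification with $\deg Q_0/m$ is just the second equality in the $b_0$ formula of that proposition.

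For $\gamma_0(P)$, the proposition reads $\gamma_0(P)=\min\{\operatorname{deg}_{x_j}P\,|\,1\le j\le d\}/\deg P = n/\deg P$. To evaluate $\deg P$, I would invoke the fact (already cited in the paper from \cite[Proposition 2]{FrKa}) that for a semi-elliptic polynomial one has $\deg P=\max\{\operatorname{deg}_{x_j}P\,|\,1\le j\le d+1\}$. Under our hypothesis this maximum is $\max\{n,m\}$, so
\[
\gamma_0(P)=\frac{n}{\max\{n,m\}}=\min\left\{1,\frac{n}{m}\right\},
\]
where the last equality is an elementary manipulation (split into the cases $n\le m$ and $n\ge m$).

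There is no real obstacle here: the entire statement is a direct substitution into Proposition \ref{prop:semi-elliptic a_0 and b_0}, combined with the standard identity $\deg P=\max_j\operatorname{deg}_{x_j}P$ for semi-elliptic polynomials. The only nontrivial content has already been established in that proposition.
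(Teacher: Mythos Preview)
Your proposal is correct and matches the paper's intended approach: the corollary is stated without proof precisely because it is an immediate specialization of Proposition~\ref{prop:semi-elliptic a_0 and b_0}, together with the identity $\deg P=\max\{\operatorname{deg}_{x_j}P\,|\,1\le j\le d+1\}$ for semi-elliptic polynomials already recalled in the paper. The only minor step beyond direct substitution is the manipulation $n/\max\{n,m\}=\min\{1,n/m\}$, which you handle correctly.
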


Next,  given a semi-elliptic polynomial $P$, we present a sufficient geometric condition on an open set $V \subseteq \R^{d+1}$ to be $P$-convex for supports \cite{FrKa,Kalmes19-1}.
A function $f : V \rightarrow \R$ is said to satisfy the minimum principle in a closed subset $F$ of $\R^{d+1}$ if for all $K \Subset F \cap V$ it holds that
$$
\min_{x \in K} f(x) = \min_{x \in \partial_F K} f(x),
$$
where $\partial_F K$ denotes the boundary of $K$ in $F$ {(in case of $F\cap V=\emptyset$ the condition is vacuous).}
\begin{proposition}\label{P-convex-suff}  (\cite[Theorem 1]{Kalmes19-1} \emph{and} \cite[Proposition 2$(iii)$]{FrKa}) Let $P$ be a semi-elliptic polynomial on $\R^{d+1}$. Then, an open set $V\subseteq \R^{d+1}$ is $P$-convex for supports if its boundary distance function
	$$
	d_V \, : \, V\rightarrow \R, \, \xi \mapsto d(\xi, \R^{d+1} \backslash V),
	$$
	where $d(\xi,\R^{d+1}\backslash V)=\inf\{|\xi-\eta| \, | \,\eta\in\R^{d+1}\backslash V\}$, satisfies the minimum principle in every affine subspace parallel to the subspace
	$$
	\{\xi\in\R^{d+1} | \xi_j=0\mbox{ for all } 1\leq j \leq d+1 \mbox{ with }\operatorname{deg}_{\xi_j}P<\operatorname{deg}P\}.
	$$
\end{proposition}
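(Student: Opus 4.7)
The plan is to reduce Proposition \ref{P-convex-suff} to the classical geometric criterion for $P$-convexity for supports together with a computation of the characteristic directions of a semi-elliptic polynomial. Recall that for a hypoelliptic constant coefficient operator $P(D)$, $V$ is $P$-convex for supports provided $d_V$ satisfies the minimum principle on every real characteristic hyperplane of $P$, that is, every affine hyperplane whose normal vector is a real zero of the usual homogeneous top-degree part $P_n$ with $n = \deg P$ (see \cite[Chapter 10]{HoermanderPDO2}).

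The key step is to identify the real zero set of $P_n$ for a semi-elliptic $P$. Writing $n_j = \deg_{\xi_j} P$ and $S = \{\xi \in \R^{d+1} : \xi_j = 0 \text{ for all } j \text{ with } n_j < n\}$, I would first show that $P_n$ depends only on the $S$-coordinates: every monomial $\xi^\beta$ with $|\beta| = n$ and $|\beta:{\bf{n}}| \le 1$ must satisfy $\beta_j = 0$ whenever $n_j < n$, since otherwise $\sum_j \beta_j/n_j > \sum_j \beta_j/n = 1$. Moreover, on $S$ the polynomials $P^0$ and $P_n$ coincide, and since $P^0 \neq 0$ on $\R^{d+1}\setminus\{0\}$ by semi-ellipticity, the restriction $P_n|_S$ is elliptic. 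Consequently the full real zero set of $P_n$ in $\R^{d+1}$ is exactly $S^\perp$, and the real characteristic hyperplanes of $P$ are precisely those hyperplanes containing $S$.

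To finish, I would observe that every affine hyperplane $H$ containing $S$ is partitioned into $S$-parallel affine translates, so that for any compact $K \Subset H \cap V$ and any minimum point $x_0$ of $d_V$ on $K$, the point $x_0$ is also a minimum of $d_V$ on $K$ intersected with its own $S$-translate $T_0$; by the hypothesis of the proposition this forces $x_0 \in \partial_{T_0}(K \cap T_0) \subseteq \partial_H K$. Hence $d_V$ satisfies the minimum principle on every characteristic hyperplane of $P$, and the classical criterion yields that $V$ is $P$-convex for supports.

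The main obstacle is the first step: the criterion in \cite[Chapter 10]{HoermanderPDO2} is naturally phrased via $P$-convex hulls and localizations of $P$ at infinity, and its reduction to the geometric minimum principle stated above is non-trivial for general hypoelliptic $P$; for the semi-elliptic class, however, this reduction is carried out in detail in \cite[Theorem 1]{Kalmes19-1} and \cite[Proposition 2]{FrKa}, whose arguments I would follow.
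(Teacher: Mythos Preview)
The paper does not prove this proposition at all; it is stated with a bare citation to \cite[Theorem 1]{Kalmes19-1} and \cite[Proposition 2$(iii)$]{FrKa}, and no argument is given. Your proposal therefore goes beyond what the paper does: you supply a correct outline of how the cited references proceed, together with two auxiliary computations that are themselves sound --- the identification of the real zero set of $P_n$ with $S^\perp$ (hence of the characteristic hyperplanes with those containing a translate of $S$), and the observation that the minimum principle on $S$-translates propagates to hyperplanes containing $S$ via $\partial_{T_0}(K\cap T_0)\subseteq\partial_H K$.

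You are also right to flag the genuine difficulty: the implication ``$d_V$ satisfies the minimum principle on every characteristic hyperplane $\Rightarrow$ $V$ is $P$-convex for supports'' is \emph{not} a general theorem for hypoelliptic operators from \cite[Chapter 10]{HoermanderPDO2}; for semi-elliptic $P$ it requires the analysis of localizations at infinity carried out in \cite{Kalmes19-1,FrKa}, exactly as you say. So your proposal is correct as a sketch, and in substance it coincides with the paper's approach of deferring to those references --- you simply add a helpful layer of exposition that the paper omits.
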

Corollary \ref{cor:semi-elliptic a_0 and b_0} implies that Theorem \ref{main-1} and Theorem \ref{main-2} are applicable to every semi-elliptic polynomial with $\operatorname{deg}_{x_1}P=\cdots=\operatorname{deg}_{x_d}P= n$. Moreover, Proposition \ref{P-convex-suff} gives a sufficient  condition on the open sets $V \subset \R^{d+1}$ for which Theorem \ref{main-2} is valid. We now discuss this class of polynomials in some more detail. We distinguish three cases:

$(i)$ $m = n = \deg P$:  Then, $P$ is elliptic and, conversely, every elliptic polynomial is of this form. Corollary \ref{cor:semi-elliptic a_0 and b_0} implies that 
$$
a_0(P)= b_0(P) = \gamma_0(P) = 1,
$$
while Proposition \ref{P-convex-suff} yields the well-known fact that every open set $V \subseteq \R^{d+1}$ is $P$-convex for supports.

$(ii)$ $m < n = \deg P$:  The heat operator and, more generally, the $k$-parabolic operators in the sense of Petrowsky  \cite[Definition 7.11]{Treves} are of this form.
Corollary \ref{cor:semi-elliptic a_0 and b_0} implies that 
$$
a_0(P)= b_0(P) = n/m  \qquad \mbox{and} \qquad \gamma_0(P) = 1.
$$
By Proposition \ref{P-convex-suff}, we have that an open set $V \subseteq \R^{d+1}$ is $P$-convex for supports if $d_V$ satisfies the minimum principle in every characteristic hyperplane, i.e., in every hyperplane of the form $\{(x,\tau)|\,x\in\R^d\}, \tau\in\R$. We mention that, by \cite[Corollary 5]{Kalmes19-1} (see also \cite[Theorem 10.8.1]{HoermanderPDO2}), the converse holds true as well in this case. In particular,  all open sets of the form $V= X\times I$, $X\subseteq\R^d$ and $I\subseteq\R$ open, are $P$-convex for supports.

$(iii)$ $m = \deg P > n$: Corollary \ref{cor:semi-elliptic a_0 and b_0} implies that 
$$
a_0(P)= b_0(P) = \gamma_0(P) =  n/m.
$$ 
By Proposition \ref{P-convex-suff}, we have that an open set $V \subseteq \R^{d+1}$ is $P$-convex for supports if $d_V$ satisfies the minimum principle in every line $\{(x,\tau)|\,\tau \in \R\}, x\in\R^d$. Again, all open sets of the form $V= X\times I$, $X\subseteq\R^d$ and $I\subseteq\R$ open, are $P$-convex for supports.

Finally, note that Theorem \ref{cor:semi-elliptic operators} stated in the introduction follows from Theorem \ref{main-1}, Theorem \ref{main-2} and the above remarks.

\bibliographystyle{plain}

\end{document}